\documentclass[reqno,twoside,11pt,english]{amsart}
\usepackage{amsmath,amsfonts,amssymb,amsthm,epsfig}
\newcommand{\RNum}[1]{\uppercase\expandafter{\romannumeral #1\relax}}
\usepackage{comment}

\usepackage{color}
\usepackage[final,colorlinks,linkcolor=blue,anchorcolor=red,citecolor=blue]{hyperref}
\usepackage{graphicx}
\usepackage{titletoc,epsf}
\usepackage{amsmath,amsfonts,latexsym,amsthm,amsxtra,amssymb,bbm}
\allowdisplaybreaks
\usepackage{graphicx,float}
\usepackage{tikz}
\usetikzlibrary{intersections,patterns}
\usetikzlibrary{calc,spy}
\usepackage{ifthen}
\usepackage{float}

\allowdisplaybreaks

\newtheorem{thm}{Theorem}[section]
\newtheorem{lem}[thm]{Lemma}
\newtheorem{op}[thm]{Open Problem}
\newtheorem{cor}[thm]{Corollary}
\newtheorem{prop}[thm]{Proposition}
\newtheorem{assertion}{Assertion}[section]

\newtheorem{step}{Step}[section]

\newtheorem{cl}{Claim}[section]

\newtheorem{ca}{Case}
\newtheorem{sca}[section]{Subcase}
\newtheorem{scl}[section]{Subclaim}
\newtheorem{conj}[equation]{Conjecture}

\theoremstyle{definition}
\newtheorem{defn}[thm]{Definition}

\newtheorem{ques}[equation]{Question}
\newtheorem{rem}[thm]{Remark}
\newtheorem{exam}[thm]{Example}

\newcounter {own}
\def\theown {\thesection       .\arabic{own}}
\numberwithin{equation}{section}

\newenvironment{pf}[1][]{%
	\vskip 3mm
	\noindent
	\ifthenelse{\equal{#1}{}}%
	{{\slshape Proof. }}%
	{{\slshape #1.} }%
}%
{\qed\bigskip}

\newtheorem{Thm}{Theorem}


\newcommand{\IR}{{\mathbb R}}

\newcommand{\dist}{{\operatorname{dist}}}

\newcommand{\bas}{\begin{assertion}}
	\newcommand{\eas}{\end{assertion}}
\newcommand{\ben}{\begin{enumerate}}
	\newcommand{\een}{\end{enumerate}}
\newcommand{\bst}{\begin{step}}
	\newcommand{\est}{\end{step}}

\def\be{\begin{equation}}
	\def\ee{\end{equation}}

\newcommand{\bee}{\begin{enumerate}}
	\newcommand{\eee}{\end{enumerate}}

\newcommand{\blem}{\begin{lem}}
	\newcommand{\elem}{\end{lem}}
\newcommand{\bthm}{\begin{thm}}
	\newcommand{\ethm}{\end{thm}}
\newcommand{\bcor}{\begin{cor}}
	\newcommand{\ecor}{\end{cor}}
\newcommand{\beg}{\begin{exam}}
	\newcommand{\eeg}{\end{exam}}
\newcommand{\begs}{\begin{examples}}
	\newcommand{\eegs}{\end{examples}}
\newcommand{\bdefe}{\begin{defn}}
	\newcommand{\edefe}{\end{defn}}
\newcommand{\bprob}{\begin{prob}}
	\newcommand{\eprob}{\end{prob}}
\newcommand{\bques}{\begin{ques}}
	\newcommand{\eques}{\end{ques}}
\newcommand{\bei}{\begin{itemize}}
	\newcommand{\eei}{\end{itemize}}
\newcommand{\bcon}{\begin{conj}}
	\newcommand{\econ}{\end{conj}}
\newcommand{\bop}{\begin{op}}
	\newcommand{\eop}{\end{op}}

\newcommand{\bstep}{\begin{step}}
	\newcommand{\estep}{\end{step}}

\newcommand{\bca}{\begin{ca}}
	\newcommand{\eca}{\end{ca}}
\newcommand{\bsca}{\begin{sca}}
	\newcommand{\esca}{\end{sca}}

\newcommand{\bcl}{\begin{cl}}
	\newcommand{\ecl}{\end{cl}}

\newcommand{\bscl}{\begin{scl}}
	\newcommand{\escl}{\end{scl}}

\newcommand{\bcons}{\begin{conjs}}
	\newcommand{\econs}{\end{conjs}}
\newcommand{\bprop}{\begin{prop}}
	\newcommand{\eprop}{\end{prop}}
\newcommand{\br}{\begin{rem}}
	\newcommand{\er}{\end{rem}}
\newcommand{\brs}{\begin{rems}}
	\newcommand{\ers}{\end{rems}}
\newcommand{\bo}{\begin{obser}}
	\newcommand{\eo}{\end{obser}}
\newcommand{\bos}{\begin{obsers}}
	\newcommand{\eos}{\end{obsers}}
\newcommand{\bpf}{\begin{pf}}
	\newcommand{\epf}{\end{pf}}
\newcommand{\ba}{\begin{array}}
	\newcommand{\ea}{\end{array}}
\newcommand{\beq}{\begin{eqnarray}}
	\newcommand{\beqq}{\begin{eqnarray*}}
		\newcommand{\eeq}{\end{eqnarray}}
	\newcommand{\eeqq}{\end{eqnarray*}}

\newcommand{\ds}{\displaystyle}

\newcounter{minutes}
\divide\time by 60
\newcounter{hours}
\multiply\time by 60 \addtocounter{minutes}{-\time}

\begin{document}

\title[{\tiny  Uniformizing non-proper Gromov Hyperbolic Spaces}]{Uniformizing  non-proper Gromov Hyperbolic Spaces}

\author[S.-J. Gao, C.-Y. Guo, M. He ]{Shu-Jing Gao, Chang-Yu Guo and Miao He}

\address[S.-J. Gao]{School of Mathematics, Shandong University, Jinan and Research Center for Mathematics and Interdisciplinary Sciences, Shandong University, Qingdao, P. R. China}
\email{gsjing9@163.com}

\address[C.-Y. Guo]{Research Center for Mathematics and Interdisciplinary Sciences, Shandong University, 266237, Qingdao, P. R. China, and Department of Physics and Mathematics, University of Eastern Finland, 80101, Joensuu, Finland}
\email{changyu.guo@sdu.edu.cn}

\address[M. He]{Research Center for Mathematics and Interdisciplinary Sciences, Shandong University, Qingdao, P. R. China and Frontiers Science Center for Nonlinear Expectations, Ministry of Education, P. R. China}
\email{hm293428@163.com}



\date{}
\subjclass[2020]{Primary: 51F30, 30C65; Secondary: 30F45, 51F99}
\keywords{Quasigeodesic, Quasihyperbolic quasigeodesic, Uniform domain, Gromov hyperbolic, uniformization.}

\begin{abstract}
In this paper, we extend a large part of the uniformization theory of Bonk-Heinonen-Koskela [Asterisque 2001] to length spaces that are not necessarily proper or geodesic. Among other things, we show that there is a one-to-one correspondence between the quasiisometry classes of complete roughly starlike Gromov hyperbolic spaces and the quasisimilarity classes of bounded uniform spaces, which provides an affirmative solution to an open question of Bonk-Heinonen-Koskela. Our approach relies crucially on the work of V\"ais\"al\"a [Expo. Math. 2005], who investigated in depth Gromov hyperbolic spaces that are not necessarily proper or geodesic. One key new ingredient is to use the so-called (quasihyperbolic) $(c,\mu)$-quasigeodesic as a suitable substitute for quasihyperbolic geodesic.
\end{abstract}


\thanks{All authors are supported by the Young Scientist Program of the Ministry of Science and Technology of China (No.~2021YFA1002200), the National Science Foundation of China (No.~12671095) and the Taishan Scholar Project.  }

\maketitle
\tableofcontents

\section{Introduction}\label{sec-1}
\subsection{Background}

A metric space $X=(X,d)$ is called a \emph{length space}, if for all $x,y \in X$, $d(x,y)= \inf\{\ell_d(\alpha) : \alpha: x \curvearrowright y\}$. Throughout this article, all metric spaces are assumed to be length. The following important concept was introduced by Gromov \cite{Gromov1987}. 
\begin{defn}[Gromov hyperbolic spaces]\label{def:Gromov hyperbolic}
	A length space $X$ is called \emph{$\delta$-Gromov hyperbolic} if for all $x,y,z,p \in X$, it holds 
	\[
	(x|z)_p \geq (x|y)_p \wedge (y|z)_p - \delta, 
	\]
	where the \emph{Gromov product} $(x|y)_p = \frac{1}{2}(|x-p| + |y-p| - |x-y|)$.
\end{defn}

Gromov hyperbolic spaces constitute one of the most important classes of metric spaces in modern geometry. These spaces provide a far-reaching generalization of classical hyperbolic geometry and have found profound applications in geometric group theory, complex dynamics, analysis on metric spaces, and geometric topology; see for instance \cite{BridsonHaefliger1999,Vaisala}. Since bounded spaces are always Gromov hyperbolic, we assume that all Gromov hyperbolic spaces considered in this paper are unbounded. 

In connection with quasiconformal analysis and geometry, a fundamental problem in the theory of Gromov hyperbolic spaces is the following:
\smallskip

\textbf{Uniformization Problem}: given a Gromov hyperbolic space $X$, construct a canonical metric deformation that transforms $X$ into a model space with enhanced regularity/geometric properties. 
\smallskip 

In quasiconformal analysis, an ideal candidate for such a model space is the class of \emph{uniform spaces}, introduced by Martio and Sarvas \cite{MS}, based on earlier ideas of John \cite{Jo}.  

\begin{defn}[Uniform spaces]\label{def: uniform domain}
	A noncomplete rectifiably connected metric space $X=(X,d)$ is called  {\it $A$-uniform}, if there is a constant $A\geq 1$ and each pair of points $z_{1},z_{2}$ in $X$ can be joined by a rectifiable curve $\gamma$ in $X$ satisfying
	\begin{enumerate}
		\item\label{con1} \textbf{Double cone condition}: $\ds\min_{j=1,2}\{\ell_d (\gamma [z_j, z])\}\leq A\, d_{X}(z)$ for all $z\in \gamma$;
		\item\label{con2}\textbf{Quasiconvexity condition}: $\ell_d(\gamma)\leq A\,d(z_{1},z_{2})$,
	\end{enumerate}
	where $\ell_d(\gamma)$ denotes the arc-length of $\gamma$ with respect to the metric $d$,
	$\gamma[z_{j},z]$ the subcurve of $\gamma$ between $z_{j}$ and $z$, and $d_X(z):=d(z,\partial X)$ denotes the distance from $z$ to the boundary of $X$. In an $A$-uniform space $X$, any curve $\gamma\subset X$, which satisfies conditions \eqref{con1} and \eqref{con2} above, is called an {\it $A$-uniform curve}.
\end{defn}


In Definition \ref{def: uniform domain} (\ref{con2}), if $d(z_{1},z_{2})$ is replaced by the inner distance $\sigma_{X}(z_{1},z_{2})$ defined by 
$$\sigma_{X}(x,y)=\inf\{\ell_d(\gamma) : \gamma: x \curvearrowright y\; \text{in} \; X\},$$
 then we call the metric space $X=(X,d)$ {\it $A$-inner uniform} and the corresponding curve an {\it $A$-inner uniform curve}.
 
Both Gromov hyperbolic spaces and (inner) uniform spaces are central objects of study in the theory of quasiconformal maps and analysis on metric spaces; see for instance \cite{BB4,BHK,BS,GGGKN-2024,Guo-2015,Guo-Huang-Wang-2025-III,HRWZ-2025,Hajlasz-Koskela-2000,Hei-Book-2001,HeKo,Jo81,HLPW,Klm2014,MS,Martin-1985,Yang-2026-RM,zhou-2026-bsm}.

The preceding uniformization problem was studied in depth by Bonk, Heinonen and Koskela in their seminal work \cite{BHK}. In their main result, they established the following deep theorem. 
\begin{Thm}[{\cite[Theorem 1.1]{BHK}}]\label{ThmA:Bonk-Heinonen-Koskela}
	There is a one-to-one correspondence between the quasiisometry classes of \emph{proper geodesic} roughly starlike Gromov hyperbolic spaces and the quasisimilarity classes of bounded \emph{locally compact} uniform spaces.
\end{Thm}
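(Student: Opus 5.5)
The plan is to build the correspondence from two explicit constructions that are mutually inverse up to the relevant equivalences. In the forward direction, take a proper geodesic roughly $K$-starlike $\delta$-hyperbolic space $X$ with base point $w$. For small $\varepsilon>0$ (depending only on $\delta$), conformally deform $X$ by the density $\rho_\varepsilon(x)=e^{-\varepsilon|x-w|}$. This gives the metric $d_\varepsilon(x,y)=\inf_\gamma\int_\gamma \rho_\varepsilon\,ds$, and we set $X_\varepsilon=(X,d_\varepsilon)$. In the reverse direction, take a bounded locally compact $A$-uniform space $\Omega$ and equip it with the quasihyperbolic metric $k_\Omega$, whose length element is $ds/d_\Omega(z)$. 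The theorem then splits into four claims:
\begin{enumerate}
\item $X_\varepsilon$ is a bounded locally compact uniform space, with $d_\varepsilon(x)\asymp e^{-\varepsilon|x-w|}$.
\item $(\Omega,k_\Omega)$ is a proper geodesic, roughly starlike Gromov hyperbolic space.
\item The two constructions are inverse up to quasiisometry and quasisimilarity respectively.
\item Quasiisometries induce quasisimilarities of the deformed spaces, and quasisimilarities induce quasiisometries of the quasihyperbolic metrics.
\end{enumerate}

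\textbf{Claim (1).} First I would show the Gehring--Hayman-type estimate for geodesics of $X$. For $x,y\in X$ and a geodesic $[x,y]$,
\[
d_\varepsilon(x,y)\asymp \ell_\varepsilon([x,y])\asymp e^{-\varepsilon (x|y)_w}\min\{1,|x-y|\}.
\]
This uses thinness of triangles: along $[x,y]$ the distance to $w$ is, up to $O(\delta)$, equal to $(x|y)_w+\operatorname{dist}(z,\{x,y\})$, so the density decays exponentially away from the point of $[x,y]$ closest to $w$. The lower bound on $d_\varepsilon$ requires comparing an arbitrary curve with the geodesic. I would do this with the standard exponential-divergence argument: a curve straying at distance $r$ from $[x,y]$ has hyperbolic length at least about $e^{r/\delta}$, which beats the gain $e^{\varepsilon r}$ in density once $\varepsilon\lesssim 1/\delta$.

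Next, rough starlikeness gives $d_\varepsilon(x)\asymp e^{-\varepsilon|x-w|}/\varepsilon$, because the boundary of the completion is reached along rays from $w$. With these two estimates, geodesics of $X$ satisfy both the double cone condition and quasiconvexity. Local compactness and boundedness are immediate, since $d_\varepsilon$ is locally bi-Lipschitz to $d$ and $d_\varepsilon\le 2/\varepsilon$.

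\textbf{Claim (2).} Uniform domains satisfy the Gehring--Osgood-type estimate
\[
k_\Omega(x,y)\le C\log\Bigl(1+\frac{d(x,y)}{d_\Omega(x)\wedge d_\Omega(y)}\Bigr)+C,
\]
together with the reverse inequality, which holds without the constant. Properness and geodesicity follow from local compactness and completeness of $k_\Omega$ via Arzel\`a--Ascoli. Uniform curves are rough quasihyperbolic quasigeodesics, which lets me express Gromov products through $d_\Omega$ and distances and yields Gromov's four-point condition. Rough starlikeness comes from joining any point to a boundary point by a half-uniform curve.

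\textbf{Claims (3) and (4).} For (3), I would check that the identity maps $(X,d)\to(X_\varepsilon,k_{X_\varepsilon})$ and $(\Omega,d)\to(\Omega_\varepsilon,d_\varepsilon)$ are respectively a quasiisometry and a quasisimilarity (bi-Lipschitz after scaling). The first follows from $d_\varepsilon(x)\asymp\rho_\varepsilon(x)$, since then $k_{X_\varepsilon}$ has length element $\asymp ds$ on $X$. For (4), I would use the Gromov-product formula for $d_\varepsilon$ above: a quasiisometry distorts Gromov products roughly linearly, so it becomes bi-Lipschitz between the deformed metrics only when it is a rough isometry. The correspondence should therefore be read through the visual-boundary normalisation of \cite{BHK}, as the classes are stated.

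I expect the main obstacle to be the lower bound in the Gehring--Hayman estimate, namely showing that no curve is much $\rho_\varepsilon$-shorter than the geodesic. That is where $\varepsilon$ must be tuned to $\delta$, and I would attack it by dyadic decomposition along the geodesic and projection of the competing curve onto it.
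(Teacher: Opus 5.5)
Your two constructions and claims (1)--(2) follow the \cite{BHK} scheme that the paper also uses. Your exponential-divergence route to the Gehring--Hayman bound differs from the paper's, and is only heuristic as stated. The paper replaces the competitor by a $d$-shortest curve of no larger $\rho$-length, shows it is a local rough quasigeodesic at scale $\sim 1/\varepsilon$ that therefore stays near $\gamma$, and compares lengths on disjoint balls.

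The genuine gap is in (3)--(4), the step that makes the constructions a correspondence of \emph{classes}. In \cite{BHK} and here, a quasiisometry is a bi-Lipschitz homeomorphism. One must show that such an $f\colon X\to X'$ is a \emph{quasisimilarity} $X_\varepsilon\to X'_{\varepsilon'}$: $\eta$-quasisymmetric, and bi-Lipschitz with a point-dependent factor $c_z$ on Whitney balls. Your remark that $f$ is bi-Lipschitz for the deformed metrics only when it is a rough isometry is beside the point. An appeal to a \emph{visual-boundary normalisation} is not an argument, so the map on classes is never shown to be well defined.

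In Proposition~\ref{prop:quasiisometric to quasisimilar}, the two-point estimate (Lemma~\ref{lem:two-point}) and $d_\varepsilon(z)\asymp\rho_\varepsilon(z)/\varepsilon$ give $d_\varepsilon(x,y)\asymp\varepsilon\,d_\varepsilon(z)\,|x-y|$ on $B_\varepsilon(z,\lambda d_\varepsilon(z))$. This yields the local condition with $c_z=\varepsilon' d_{\varepsilon'}(f(z))/(\varepsilon\, d_\varepsilon(z))$. Quasisymmetry is reduced, via quasiconvexity and \cite[Theorem~6.6]{Vaisala1999}, to the weak implication $d_\varepsilon(x,y)\le d_\varepsilon(x,z)\Rightarrow d_{\varepsilon'}(x',y')\le C\,d_{\varepsilon'}(x',z')$. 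Linear distortion of individual Gromov products does not suffice here: it turns $(x|z)_w\le(x|y)_w+C$ into a bound with error of order $(L-L^{-1})(x|y)_w$. What is needed is the three-point fact that $(x|z)_w\le(x|y)_w+C$ is preserved, with a new constant, by quasiisometries (stability of quasigeodesics, \cite[Theorem~3.21]{Vaisala}).

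The other compatibility statements are missing or misstated. The identity $(\Omega,d)\to\Omega_\varepsilon$ is not bi-Lipschitz after scaling: already for the disk, $d_\varepsilon$ behaves like $d^{\varepsilon}/\varepsilon$ between points near the boundary. It is only a quasisimilarity, with $c_x=\rho_\varepsilon(x)/d_\Omega(x)$. Its quasisymmetry requires reading off $(a|b)_w$ from the $d$-geometry. The paper does this through the point $y_b$ at $d$-arclength $d(a,b)$ on a quasihyperbolic quasigeodesic from $a$ to $w$ (Lemma~\ref{lem:intrinsic-314}, cf.\ \cite[Lemma~3.14]{BHK}), so that $d(a,b)\le d(a,c)$ forces $k(w,y_b)\ge k(w,y_c)+k(y_b,y_c)-C$.

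The reverse functoriality, that a quasisimilarity of uniform spaces is bi-Lipschitz for $k$, is not addressed at all. It needs quasisymmetry to bound $d'_{\Omega'}(f(x))$ against $d'(f(x),f(x_0))$ for $x_0$ at Whitney distance from $x$. This gives $k'(f(x),f(y))\lesssim k(x,y)$ for Whitney-close points, followed by chaining along a near-geodesic (Proposition~\ref{prop:quasisimilar to quasiisometry}).

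Smaller issues:
\begin{itemize}
\item In (2), the real input for thin triangles is that quasihyperbolic geodesics are uniform curves (\cite[Theorem~2.10]{BHK}, Theorem~\ref{thm:intrinsic-210} here), not the converse you state. A multiplicative comparison of $k$ with $\log\bigl(1+d(x,y)/(d_\Omega(x)\wedge d_\Omega(y))\bigr)$ does not yield the four-point condition.
\item Rough starlikeness needs a ray from the \emph{fixed} base point passing near $x$, not a half-uniform curve from $x$.
\item In (1), $|z-w|\approx(x|y)_w+|z-c|$ with $c$ the projection of $w$ onto $[x,y]$, not $(x|y)_w+\operatorname{dist}(z,\{x,y\})$.
\end{itemize}
On the positive side, your argument that $(X,\varepsilon|\cdot|)\to(X_\varepsilon,k_\varepsilon)$ is bi-Lipschitz, from the pointwise bound $\rho_\varepsilon/d_\varepsilon\asymp\varepsilon$, is correct. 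It is more direct than the Gehring--Hayman argument in Proposition~\ref{prop:quasiisometry}.
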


To interpret Theorem \ref{ThmA:Bonk-Heinonen-Koskela} accurately, we need to recall some elementary definitions. A homeomorphism $f \colon (X, d) \to (X', d')$ between two metric spaces is a \textit{quasiisometry}, or an \textit{$L$-quasiisometry}, if $L \geqslant 1$ and
\begin{equation}\label{eq:1.16}
	\frac{1}{L} \, d(x,y) \leqslant d'(f(x), f(y)) \leqslant L \, d(x,y)
\end{equation}
for all $x, y \in X$.
Thus quasiisometries are the same as \textit{bi-Lipschitz homeomorphisms}.

A homeomorphism $f \colon (X, d) \to (X', d')$ between two metric spaces is \textit{quasisymmetric}, or \textit{$\eta$-quasisymmetric}, if $\eta \colon [0, \infty) \to [0, \infty)$ is a homeomorphism such that
\begin{equation}\label{eq:quasisymmetric}
	\frac{d'(f(x), f(y))}{d'(f(x), f(z))} \leqslant \eta\left(\frac{d(x,y)}{d(x,z)}\right)
\end{equation}
for all triples of distinct points $x, y, z$ in $X$. 

\begin{defn}[Quasisimilarity]\label{def:quasisimilarity}
	A homeomorphism $f \colon (\Omega, d) \to (\Omega', d')$ between two noncomplete metric spaces is a \textit{quasisimilarity}, with \textit{data} $(\eta, L, \lambda)$, where $L \geqslant 1$ and $0 < \lambda \leqslant 1$, if	$f$ is $\eta$-quasisymmetric	and if for each $x \in \Omega$ there is $c_x > 0$ such that
	\begin{equation}\label{eq:con2}
		\frac{c_x}{L} \, d(z,y) \leqslant d'(f(z), f(y)) \leqslant L c_x \, d(z,y)
	\end{equation}
	whenever $z, y \in B(x,\lambda d_\Omega (x))$.
\end{defn}

There are two main operations towards the uniformization theory of Bonk-Heinonen-Koskela \cite{BHK}. One operation is \emph{uniformization} $\mathcal{D}\colon X \longrightarrow X_\varepsilon$, the letter $\mathcal{D}$ abbreviating \emph{dampening}, and the other one is
\emph{quasihyperbolization} $\mathcal{Q}\colon (\Omega,d) \longrightarrow (\Omega, k)$.  






To be precise, let $X=(X,|\cdot|)$ be a metric space with a fixed base point $p \in X$. For each $\varepsilon > 0$, consider the conformal deformation induced by the density
\begin{equation}\label{eq:density}
	\rho_{\varepsilon}(x) = \exp\bigl(-\varepsilon|x-p|\bigr), \quad x \in X,
\end{equation}
and define the deformed metric by
\begin{equation}\label{eq:conformal-metric}
	d_\varepsilon(a,b) = \inf_{\gamma} \int_{\gamma} \rho_{\varepsilon} \, ds,
\end{equation}
where the infimum is taken over all rectifiable curves in $(X,|\cdot|)$ joining $a$ and $b$. The resulting metric space is denoted $X_{\varepsilon} = (X, d_ \varepsilon)$. Furthermore, for $x\in X_\varepsilon$, define $$d_\varepsilon(x) = d_\varepsilon(x,  \partial X_\varepsilon),$$
where $ \partial X_\varepsilon = \overline{X}_\varepsilon \setminus X_\varepsilon$.
For any rectifiable curve $\gamma$ in $X$ we have
\begin{equation}\label{eq:length}
	\ell_\varepsilon(\gamma) = \int_\gamma \rho_\varepsilon \, ds,
\end{equation}
provided the identity map $(X, |x-y|) \to (X, \ell)$ is a homeomorphism. 

Next, we recall the quasihyperbolic metric induced by a metric $d$. This concept was initially introduced by Gehring and Palka \cite{GePa} for domains in $\IR^n$, and then, has been extensively studied in \cite{Geo}.
The {\it quasihyperbolic length} of a rectifiable curve
$\gamma$
in a noncomplete metric space $(\Omega,d)$ is defined as
$$
\ell_{k}(\gamma):=\int_{\gamma}\frac{|dz|}{d_\Omega(z)}.
$$
For any $z_1$, $z_2$ in $\Omega$, the {\it quasihyperbolic distance}
$k(z_1,z_2)$ between $z_1$ and $z_2$ is set to be
$$k(z_1,z_2)=\inf_{\gamma}\{\ell_k(\gamma)\},
$$
where the infimum is taken over all rectifiable curves $\gamma$
joining $z_1$ and $z_2$ in $\Omega$. A rectifiable curve $\gamma$ from $z_1$ to $z_2$ is called a {\it quasihyperbolic geodesic} if
$\ell_k(\gamma)=k(z_1,z_2)$. Clearly, each subcurve of a quasihyperbolic
geodesic is a quasihyperbolic geodesic.

Bonk-Heinonen-Koskela proved in \cite[Proposition 4.15]{BHK} that if $X$ is proper geodesic $\delta$-Gromov hyperbolic space, then quasihyperbolic geodesics in the conformal deformations $X_\varepsilon=(X,d_{\varepsilon})$ are $A(\delta)$-uniform curves for $0<\varepsilon<\varepsilon_0(\delta)$. In particular, all conformal deformations $X_\varepsilon$ are bounded $A(\delta)$-uniform spaces. In \cite[Proposition 4.28]{BHK}, they further obtained that if $(\Omega,d)$ is $A$-uniform, then the identity map id$\colon (\Omega,d)\to \Omega_{\varepsilon}$ is quasisimilar with data depending only on $A$ and $\varepsilon$\footnote{In \cite[Proposition 4.28]{BHK}, it was inaccurately stated that the data associated with  id depend only on $A$. In Example \ref{exam:counter-example} below, we give a counter-example to illustrate the additional dependence on $\varepsilon$} for all $0<\varepsilon\leq \varepsilon_0(A)$. Conversely, given a bounded uniform space $\Omega=(\Omega,d)$, let $k$ be the associated quasihyperbolic metric. Then they proved in \cite[Theorem 3.6]{BHK} that the resulting metric space $(\Omega,k)$ is proper geodesic and roughly starlike Gromov hyperbolic space. 

The one-to-one correspondence between two types of isomorphism classes of metric spaces, as stated in Theorem \ref{ThmA:Bonk-Heinonen-Koskela}, is given in terms of the operations $\mathcal{D}$ and $\mathcal{Q}$. More precisely, the composition $\mathcal{Q}\circ \mathcal{D}$ takes a proper geodesic and roughly starlike $\delta$-Gromov hyperbolic space back to its quasiisometry class, and the composition $\mathcal{D}\circ \mathcal{Q}$ preserves the quasisimilarity type of bounded locally compact uniform spaces.

Motivated by the theory of quasiconformal maps in infinite-dimensional Banach spaces, Bonk-Heinonen-Koskela asked the following  open question in \cite[Last paragraph on page 5]{BHK}.
\smallskip

\textbf{Question (Bonk-Heinonen-Koskela)}. It would be interesting to know \emph{if there is a general relationship between uniformity and Gromov hyperbolicity that could cover infinite-dimensional situations as well}.
\smallskip 

Motivated by the above open question, we aim at extending Theorem \ref{ThmA:Bonk-Heinonen-Koskela} to more general non-proper metric spaces. Note that as was pointed out by the authors in \cite[Last paragraph on page 5]{BHK}, their method largely will not work due to lack of compactness. On the other hand, V\"ais\"al\"a \cite{Vaisala} has already developed a rich theory of Gromov hyperbolic spaces that are not necessarily proper or geodesic. 

\subsection{Main results}
Our main result of this paper shows that Theorem \ref{ThmA:Bonk-Heinonen-Koskela} remains valid in the general setting of complete roughly starlike Gromov hyperbolic spaces and the associated bounded uniform spaces. In particular, it includes many infinite dimensional spaces such as Banach spaces and Alexandrov spaces (such as CAT(0) spaces). 
\begin{thm}\label{thm:one-to-one correspondence}
	There is a one-to-one correspondence between the quasiisometry classes of complete roughly starlike Gromov hyperbolic spaces and the quasisimilarity classes of bounded uniform spaces.
\end{thm}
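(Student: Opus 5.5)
The correspondence will be realized by the two operations $\mathcal{D}$ (dampening $X\mapsto X_\varepsilon$) and $\mathcal{Q}$ (quasihyperbolization $(\Omega,d)\mapsto(\Omega,k)$). Four statements have to be established in the non-proper, non-geodesic setting:
\begin{enumerate}
\item if $X$ is a complete, roughly starlike, $\delta$-Gromov hyperbolic length space, then $X_\varepsilon$ is a bounded $A(\delta)$-uniform space for $0<\varepsilon\le\varepsilon_0(\delta)$;
\item if $(\Omega,d)$ is a bounded $A$-uniform space, then $(\Omega,k)$ is complete, roughly starlike and $\delta(A)$-hyperbolic;
\item $\mathcal{Q}\circ\mathcal{D}$ returns a space quasiisometric to $X$, i.e.\ the identity $(X,|\cdot|)\to(X,k_\varepsilon)$ is bi-Lipschitz;
\item $\mathcal{D}\circ\mathcal{Q}$ returns a space quasisimilar to $\Omega$, i.e.\ the identity $(\Omega,d)\to(\Omega,k)_\varepsilon$ is a quasisimilarity, with data depending on $A$ and $\varepsilon$.
\end{enumerate}
Functoriality then gives the bijection. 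A quasiisometry $X\to Y$ induces a quasisimilarity $X_\varepsilon\to Y_\varepsilon$, after moving base points and using rough starlikeness. Conversely, a quasisimilarity $\Omega\to\Omega'$ is bi-Lipschitz in the quasihyperbolic metrics. This is a Gehring–Osgood type estimate using the local bi-Lipschitz condition \eqref{eq:con2} on Whitney-type balls $B(x,\lambda d_\Omega(x))$, together with quasisymmetry to compare $d_\Omega(x)$ with $d_{\Omega'}(f(x))$.

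For step (2), I will avoid quasihyperbolic geodesics, which may not exist without properness. Instead I will work with $(c,\mu)$-quasigeodesics, i.e.\ curves with $\ell_k(\gamma)\le c\,k(x,y)+\mu$, which always exist by the definition of $k$ as an infimum. The plan is to show that in an $A$-uniform space such curves are uniform curves with controlled constants, by adapting the standard BHK argument. Then I get the Gehring–Hayman type estimate
\[
k(x,y)\simeq 2\log\frac{d(x,y)}{\sqrt{d_\Omega(x)d_\Omega(y)}}+O(1)
\]
for nearby points, via the usual inequalities $k\ge\log(1+d/\min d_\Omega)$ and the upper bound along uniform curves. Väisälä's hyperbolicity criterion for non-geodesic length spaces, namely that rough-quasigeodesic triangles are thin, then yields $\delta(A)$-hyperbolicity. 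Completeness of $(\Omega,k)$ follows from $k(x,y)\ge|\log(d_\Omega(x)/d_\Omega(y))|$: Cauchy sequences stay a definite distance from $\partial\Omega$, and there they are $d$-Cauchy. Rough starlikeness follows from boundedness: join a base point $w$ with $d_\Omega(w)\simeq\operatorname{diam}\Omega$ to points near $\partial\Omega$ by uniform curves.

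For step (1), I will follow BHK §4, replacing geodesics by $h$-short arcs and Väisälä's rough geodesic estimates. Fix $x,y$ and a $(1,h)$-quasigeodesic between them, which exists since $X$ is a length space. The key estimates, each up to multiplicative constants depending on $\delta$, are
\[
d_\varepsilon(x)\simeq \varepsilon^{-1}e^{-\varepsilon|x-p|},
\qquad
d_\varepsilon(x,y)\simeq \varepsilon^{-1}e^{-\varepsilon(x|y)_p}\min\{1,\varepsilon|x-y|\}.
\]
The first uses rough starlikeness: every point lies near a rough ray from $p$ to $\partial_\infty X$. The second uses thin-triangle estimates along rough geodesics. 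With these in hand, quasigeodesics in $X$ are shown to satisfy the double cone and quasiconvexity conditions in $d_\varepsilon$. Completeness of $X$ is used to identify $\partial X_\varepsilon$ with the Gromov boundary, so that the density estimate for $d_\varepsilon(x)$ is valid.

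For step (3), the estimate for $d_\varepsilon(x)$ shows that $\rho_\varepsilon/d_\varepsilon\simeq\varepsilon$ pointwise. Integrating along curves, this gives $k_\varepsilon\simeq\varepsilon|\cdot|$ up to additive constants, and bi-Lipschitz on the nose because both are length metrics. For step (4), I use step (2) and the Gehring–Hayman estimate to compare $d_\varepsilon$ on $(\Omega,k)$ with $d$. Locally, $d_\varepsilon\approx c_x\,d$ with $c_x=e^{-\varepsilon k(w,x)}/d_\Omega(x)$, while globally both are controlled by Gromov products, which gives quasisymmetry.

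\textbf{Main obstacle.} I expect the hardest part to be step (1), specifically the lower bound in the estimate for $d_\varepsilon(x,y)$. In BHK it relies on geodesics and compactness, via Arzelà–Ascoli limits of rays. Here I would first try to use Väisälä's rough rays and his theory of the Gromov boundary for non-proper spaces in place of geodesic rays. I would then bound $\int_\gamma\rho_\varepsilon$ from below for an arbitrary curve $\gamma$ by discretizing it and applying the hyperbolicity inequality to the discretization points.
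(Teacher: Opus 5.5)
Your overall architecture matches the paper's (and BHK's): Theorem~\ref{thm:uniform} for $\mathcal{D}$, Theorem~\ref{thm:Gromov and roughly starlike} for $\mathcal{Q}$, Propositions~\ref{prop:quasiisometric to quasisimilar} and~\ref{prop:quasisimilar to quasiisometry} for functoriality, and Propositions~\ref{prop:quasisimilar} and~\ref{prop:quasiisometry} for the two compositions.

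\textbf{The gap in step (2).} The class of curves you choose, arcs with $\ell_k(\gamma)\le c\,k(x,y)+\mu$, does not consist of uniform curves. This is exactly the point the paper calls its key new ingredient.
\begin{itemize}
\item For $c>1$ the slack $(c-1)k(x,y)$ is unbounded. An arc may spend $k$-length about $(c-1)k(x,y)$ on an out-and-back excursion from $x$ towards $\partial\Omega$. It then reaches a point $z$ with $d_\Omega(z)\approx e^{-(c-1)k(x,y)/2}d_\Omega(x)$, while both subarcs at $z$ have $d$-length $\gtrsim d_\Omega(x)$. The double cone constant therefore blows up as $k(x,y)\to\infty$.
\item For $c=1$ the additive $\mu$ destroys quasiconvexity. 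If $d(x,y)\ll d_\Omega(x)$, an arc with $\ell_k(\gamma)\le k(x,y)+\mu$ may have $d$-length bounded below by a multiple of $d_\Omega(x)$ depending only on $\mu$.
\end{itemize}
This is why $\Lambda^{c,\mu}_{x,y}$ imposes the rough quasigeodesic inequality on every subarc and scales the additive constant by $\min\{k(x,y),1\}$. The proof of Theorem~\ref{thm:intrinsic-210} needs both \eqref{eq:prop of subcurve} on the dyadic blocks and \eqref{eq:prop of curve} for quasiconvexity.

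Since you only need existence of such curves, the repair is cheap. Take $c=1$ and arcs with $\ell_k(\gamma)\le k(x,y)+\mu\min\{k(x,y),1\}$. These automatically satisfy $\ell_k(\gamma[u,v])\le k(u,v)+\mu$ on subarcs and $\ell_k(\gamma)\le (1+\mu)k(x,y)$ globally. The same normalization is needed in step (1): an $h$-short arc with $|x-y|\ll h$ need not be quasiconvex in $X_\varepsilon$.

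\textbf{Two further points in step (2).}
\begin{itemize}
\item Rough starlikeness here means a $(\nu,h)$-road from $w$ to a point of $\partial_G(\Omega,k)$. Joining $w$ by uniform curves to points near $\partial\Omega$ does not produce one. You need two facts: sequences $u_n\to a\in\partial\Omega$ are Gromov sequences in $(\Omega,k)$ (Proposition~\ref{prop:boundary}), and V\"ais\"al\"a's theorem that $w$ is joined to every Gromov boundary point by a road whose arcs end at points of a representing sequence. Your estimate is then applied to one arc ending close to $a$, as in Theorem~\ref{thm:roughly starlike}.
\item The estimate $k\simeq 2\log\bigl(d(x,y)/\sqrt{d_\Omega(x)d_\Omega(y)}\bigr)+O(1)$ is not available as stated. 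For nearby points its right side is negative. For distant points it fails additively: along the diagonal near a corner of a square, $k$ is $\sqrt2$ times the logarithm of the ratio of boundary distances. What you do have is $\log(1+d/\min d_\Omega)\le k\le 4A^2\log(1+d/\min d_\Omega)$, from \eqref{eq:quasihyperbolic inequality} and Lemma~\ref{lem:intrinsic-213}. That suffices, because thinness follows directly from the sides being uniform curves, as in Theorem~\ref{thm:intrinsic-36}.
\end{itemize}

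\textbf{Where your route differs from the paper's.} Here your route works and is shorter.

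The paper obtains quasiconvexity in Theorem~\ref{thm:uniform}, and the lower bounds in Lemma~\ref{lem:two-point} and Proposition~\ref{prop:quasiisometry}, from its Gehring--Hayman Theorem~\ref{thm:G-H}. That theorem is proved for arbitrary densities satisfying \eqref{eq:Harnack-general}, via Propositions~\ref{prop:local} and~\ref{prop:curve-approx}, Lemma~\ref{lem:reverse} and Theorem~\ref{thm:weighted-close}.

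You instead cut an arbitrary curve into pieces of $d$-length in $[1/\varepsilon,2/\varepsilon]$. Each piece has $\rho_\varepsilon$-length at least $(1-e^{-1})\varepsilon^{-1}\rho_\varepsilon(x_{i-1})$, and $\rho_\varepsilon(x_{i-1})\ge e^{-2}e^{-\varepsilon(x_{i-1}|x_i)_p}$. Curves shorter than $1/\varepsilon$ are handled directly. The ``hyperbolicity inequality'' you need to sum these is the multiplicative chain inequality $e^{-\varepsilon(x_0|x_n)_p}\le 2\sum_i e^{-\varepsilon(x_{i-1}|x_i)_p}$. It holds in any $\delta$-hyperbolic space when $e^{\varepsilon\delta}\le\sqrt2$, by the usual halving induction. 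Naively iterating the four-point condition loses $\delta\log_2 n$ and does not suffice.

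Combined with the upper bound along a quasigeodesic (as in the proof of Lemma~\ref{lem:two-point}, using Lemma~\ref{quasigeodesic ieq}), this gives the two-point estimate and quasiconvexity at once. Section~\ref{sec:Gehring-Hayman} then becomes unnecessary for Theorem~\ref{thm:one-to-one correspondence}. What the paper's route buys is a Gehring--Hayman inequality for general Harnack densities, which a chain inequality for $e^{-\varepsilon(\cdot|\cdot)_p}$ does not give.

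Your step (3) is likewise simpler than Proposition~\ref{prop:quasiisometry}. Since $d$ and $d_\varepsilon$ are locally bi-Lipschitz, $|d_\varepsilon z|=\rho_\varepsilon|dz|$. Lemma~\ref{lem:varepsilon boundary distance} gives $\varepsilon/A_1\le\rho_\varepsilon/d_\varepsilon\le e\varepsilon$. Hence $\frac{\varepsilon}{A_1}\ell_d(\alpha)\le\ell_{k_\varepsilon}(\alpha)\le e\varepsilon\,\ell_d(\alpha)$ for every curve $\alpha$. Taking infima in the length space $X$ yields $\frac{\varepsilon}{A_1}|x-y|\le k_\varepsilon(x,y)\le e\varepsilon|x-y|$ directly. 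There are no additive constants to remove and no Gehring--Hayman input is needed.
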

  
For the proof of Theorem \ref{thm:one-to-one correspondence}, we shall follow the strategy of Bonk-Heinonen-Koskela \cite{BHK}, making use of the uniformization operator $\mathcal{D}\colon X \longrightarrow X_\varepsilon$ and the quasihyperbolization operator $\mathcal{Q}\colon (\Omega,d) \longrightarrow (\Omega, k)$. As an extension of \cite[Proposition 4.15]{BHK} mentioned above, we shall prove in Theorem \ref{thm:uniform} below that $\mathcal{D}$ sends complete $\delta$-Gromov hyperbolic spaces to bounded $A(\delta)$-uniform spaces for $0<\varepsilon<\varepsilon_0(\delta)$. Unlike in the setting of Bonk-Heinonen-Koskela, for non-proper spaces, quasihyperbolic geodesics are not always available. Motivated by the class of quasigeodesics introduced by V\"ais\"al\"a (see e.g.~\cite{Vaisala1999}) and by the recent work on dimension-free Gehring-Hayman inequalities \cite{Guo-Huang-Wang-2025-I}, we shall study a variant curve family, consisting of \emph{(quasihyperbolic) quasigeodesics}, as a suitable substitute for quasihyperbolic geodesics.

Let $c\geq1$ and $\mu\geq0$, an arc $\gamma: a \curvearrowright b$ in a metric space $(X,d)$ is called \emph{$(c,\mu)$-rough quasigeodesic} if for all $z, w \in \gamma$,
\begin{equation*}\label{eq:quasigeodesic}
	\ell_d(\gamma[z,w]) \leq c|z-w|+\mu.
\end{equation*}
When $\mu=0$, $\gamma$ is called \emph{$c$-quasigeodesic}, and if in addition $c=1$, $\gamma$ is a \emph{geodesic}.
\begin{defn}[$(c,\mu)$-quasigeodesic]\label{def:rough quasigeodesic}
For $c\geq 1$ and $\mu\geq 0$, we set 
\beqq
\begin{aligned}
	\Lambda_{x,y}^{c,\mu}(X, d) = \Bigl\{ \gamma \subset  X :\, &\gamma: x \curvearrowright y
	\text{ is } (c,\mu_0)\text{-rough quasigeodesic with }  \mu_0 = \mu \min\{|x-y|, 1\} \Bigr\}.
\end{aligned}
\eeqq
Each $\gamma \in \Lambda_{x,y}^{c,\mu}(X, d)$ is called a \emph{$(c,\mu)$-quasigeodesic} in $(X,d)$. 

If the metric $d$ is replaced by the associated quasihyperbolic metric $k$, we call 
$\gamma \in \Lambda_{x,y}^{c,\mu}(X, k)$ a \emph{quasihyperbolic $(c,\mu)$-quasigeodesic} in $X$. 
\end{defn}

If $X=(X,d)$ is a length space, then for each $c\geq 1$ and $\mu>0$, it is easy to see that for any pair of points $x,y\in X$, $\Lambda_{x,y}^{c,\mu}(X, d)\neq \emptyset.$

For each $(c,\mu)$-quasigeodesic $\gamma \in \Lambda_{x,y}^{c,\mu}(X, d)$, $z,w \in \gamma$, we have 
\beq\label{eq:prop of subcurve}
\ell_d(\gamma[z,w]) \leq c\,|z-w| + \mu_0\leq c|z-w| +\mu.
\eeq
This implies that $\gamma$ is $(c,\mu)$-rough quasigeodesic. Furthermore, we also have 
\beq\label{eq:prop of curve}
\ell_d(\gamma) \leq c\,|x-y| + \mu\,|x-y|	= (c+\mu)\,|x-y|.
\eeq

As an extension of \cite[Proposition 4.15]{BHK}, we show that $(c,\mu)$-quasigeodesics in complete Gromov hyperbolic spaces are uniform curves in their conformal deformations, quantitatively.
\begin{thm}\label{thm:uniform}
	Let $X=(X,d)$ be a complete $\delta$-Gromov hyperbolic space, $c\geq 1$ and $\mu> 0$. Then there exist constants $\varepsilon_0=\varepsilon_0(\delta,c,\mu)>0$ and $A=A(\delta,c,\mu)\geq 1$ such that for all $0 < \varepsilon \leq \varepsilon_0$, every $(c,\mu)$-quasigeodesic $\gamma$ in $(X,d)$ is an $A$-uniform curve in the conformally deformed space $X_\varepsilon = (X,d_\varepsilon)$. In particular, 
	all $X_\varepsilon$ are bounded $A_0$-uniform spaces for $0 < \varepsilon \leq \varepsilon_1$, where $A_0=A_0(\delta)=A(\delta,1,1)$ and $\varepsilon_1=\varepsilon_1(\delta)=\varepsilon_0(\delta,1,1)$. 
\end{thm}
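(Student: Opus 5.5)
We follow the scheme of \cite[Chapter 4]{BHK}, replacing geodesics by $(c,\mu)$-quasigeodesics and compactness by V\"ais\"al\"a's theory \cite{Vaisala} of non-proper hyperbolic spaces. The argument has four steps.

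\textbf{Step 1: basic estimates for $d_\varepsilon$ and $d_\varepsilon(x)$.} First we show that for $0<\varepsilon\le\varepsilon_0(\delta)$ there is $C=C(\delta)$ with
\[
C^{-1}\,\varepsilon^{-1}e^{-\varepsilon|x-p|}\le d_\varepsilon(x)\le C\,\varepsilon^{-1}e^{-\varepsilon|x-p|} .
\]
The upper bound comes from following a rough geodesic ray from $x$ to infinity. Such a ray exists in a complete Gromov hyperbolic length space by V\"ais\"al\"a; if only rough starlikeness is available we use a long $(1,\mu)$-rough ray. The $d_\varepsilon$-length of this ray is at most a constant times $\varepsilon^{-1}e^{-\varepsilon|x-p|}$.

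The lower bound is the Harnack-type estimate. Any curve leaving $X$ toward $\partial X_\varepsilon$ must travel to $|\cdot-p|=\infty$, since $X$ is complete and $d_\varepsilon$-Cauchy sequences staying bounded in $d$ converge. Integrating $\rho_\varepsilon$ along such a curve gives at least $\int_{|x-p|}^\infty e^{-\varepsilon t}\,dt$. We also record the two-point estimate
\[
d_\varepsilon(x,y)\asymp \varepsilon^{-1} e^{-\varepsilon (x|y)_p}\min\{1,\varepsilon|x-y|\},
\]
as in \cite[Lemma 4.10]{BHK}, for $\varepsilon\le \varepsilon_0(\delta)$. The lower bound there is the key place where hyperbolicity enters. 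We prove it by V\"ais\"al\"a's chain argument: subdivide an almost-minimizing curve into pieces of $d$-length about $1$, and use the $\delta$-inequality on Gromov products along the chain. This replaces the use of geodesics in \cite{BHK}.

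\textbf{Step 2: the geometry of a quasigeodesic.} Let $\gamma\in\Lambda^{c,\mu}_{x,y}(X,d)$. By the stability of rough quasigeodesics in non-proper hyperbolic length spaces \cite{Vaisala}, $\gamma$ stays within $H=H(\delta,c,\mu)$ of any $h$-short arc from $x$ to $y$. Hence for $z\in\gamma$ we have
\[
|z-p|\ge (x|y)_p-H \quad\text{and}\quad |z-p|\ge \min\{|x-p|-|x-z|,\ |y-p|-|y-z|\}\cdot(\text{up to }H).
\]
More precisely, $\gamma$ behaves like a tripod: from $x$ it goes roughly "down" toward $p$ with $|z-p|\approx |x-p|-|x-z|$, up to the branch point at distance about $(y|p)_x$ from $x$, and then "up" toward $y$.

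\textbf{Step 3: the double cone condition.} Fix $z\in\gamma$ and assume $z$ lies on the $x$-side of the branch point. We bound $\ell_\varepsilon(\gamma[x,z])=\int_{\gamma[x,z]}e^{-\varepsilon|w-p|}\,ds$. Parametrize by $d$-arclength $s$ from $z$ backwards. By \eqref{eq:prop of subcurve} and Step 2,
\[
|w-p|\ge |z-p|+c^{-1}s-C(\delta,c,\mu).
\]
So the integral is at most $C\,c\,\varepsilon^{-1}e^{-\varepsilon|z-p|}$, which is $\le A\,d_\varepsilon(z)$ by Step 1. The constants $\mu$ and $H$ only cost a factor $e^{\varepsilon C}\le e^{C}$. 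The requirement $\varepsilon\le\varepsilon_0$ enters through Step 1 and by keeping $\varepsilon H$ bounded.

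\textbf{Step 4: quasiconvexity.} The same integration bounds $\ell_\varepsilon(\gamma)$. When $|x-y|\ge1/\varepsilon$, it is at most $C\varepsilon^{-1}e^{-\varepsilon(x|y)_p}$. When $|x-y|$ is small, \eqref{eq:prop of curve} gives $\ell_\varepsilon(\gamma)\le (c+\mu)|x-y|e^{-\varepsilon(|x-p|-(c+\mu)|x-y|)}$; this is where $\mu_0=\mu\min\{|x-y|,1\}$ is essential. In both cases the two-point estimate of Step 1 gives $\ell_\varepsilon(\gamma)\le A\,d_\varepsilon(x,y)$.

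The final statement follows from four facts. First, $\Lambda^{1,1}_{x,y}\neq\emptyset$ in a length space. Second, $X_\varepsilon$ is bounded, with diameter at most $2/\varepsilon$. Third, $X_\varepsilon$ is noncomplete, because $X$ is unbounded and a rough ray is $d_\varepsilon$-Cauchy. Fourth, $d$ and $d_\varepsilon$ induce the same topology, so $(1,1)$-quasigeodesics are admissible curves in $X_\varepsilon$.

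The main obstacle is the lower bound $d_\varepsilon(x,y)\gtrsim \varepsilon^{-1}e^{-\varepsilon(x|y)_p}\min\{1,\varepsilon|x-y|\}$ without geodesics or properness. I would try first to adapt V\"ais\"al\"a's estimate that a curve joining $x,y$ must pass within $C\delta$ of the point at level $(x|y)_p$ on a rough geodesic, applied to near-minimizing curves for $d_\varepsilon$.
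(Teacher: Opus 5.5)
Your double cone argument is essentially the paper's. It uses the one-sided tripod inequality $|p-u|\ge |p-z|+|u-z|-\kappa$ for $u\in\gamma[a,z]$ (Lemma~\ref{quasigeodesic ieq}), integrates using $|u-z|\ge (s-\mu)/c$, and combines this with the bound $d_\varepsilon(z)\gtrsim \varepsilon^{-1}\rho_\varepsilon(z)$. Stability alone only gives Hausdorff closeness to a short arc. The ordered statement along $\gamma[a,z]$ needs the extra step the paper takes with Lemma~\ref{lem:proj1} and the distance estimate along short arcs. Your proof of the lower bound for $d_\varepsilon(z)$ is more direct than the paper's appeal to Proposition~\ref{prop:boundary-id}: completeness forces $|x_n-p|\to\infty$ whenever $x_n\to\zeta\in\partial X_\varepsilon$, and you then integrate over the levels of $|\cdot-p|$. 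It even gives $d_\varepsilon(z)\ge \varepsilon^{-1}\rho_\varepsilon(z)$.

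Quasiconvexity is where you genuinely differ, and your route works once one scale is corrected. The paper gets quasiconvexity in one line from the Gehring--Hayman inequality (Theorem~\ref{thm:G-H}), whose proof occupies Section~\ref{sec:Gehring-Hayman}. You instead bound $\ell_\varepsilon(\gamma)$ explicitly and compare it with the lower half of the two-point estimate, proved by a chain argument. The chain must be taken at scale $1/\varepsilon$, not $1$:
\begin{itemize}
\item Let $x_0=x$ and let $x_i$ be the first point of $\alpha$ after $x_{i-1}$ with $|x_i-x_{i-1}|=1/\varepsilon$. Each such piece has $\ell_\varepsilon\ge e^{-1}\varepsilon^{-1}\rho_\varepsilon(x_{i-1})\ge e^{-2}\varepsilon^{-1}e^{-\varepsilon(x_{i-1}|x_i)_p}$.
\item Merge the final short piece into the previous one. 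Then apply the chain lemma for the $e^{\varepsilon\delta}$-quasi-ultrametric $q=e^{-\varepsilon(\cdot|\cdot)_p}$: if $e^{\varepsilon\delta}\le\sqrt2$, then $q(x_0,x_n)\le 2\sum_i q(x_{i-1},x_i)$.
\item This gives $d_\varepsilon(x,y)\gtrsim \varepsilon^{-1}e^{-\varepsilon(x|y)_p}$ when $\varepsilon|x-y|\ge 1$. The case $\varepsilon|x-y|<1$ needs no hyperbolicity, since $d_\varepsilon(x,y)\ge e^{-1}\rho_\varepsilon(x)|x-y|$.
\end{itemize}
With pieces of $d$-length about $1$ you only get $d_\varepsilon(x,y)\gtrsim e^{-\varepsilon(x|y)_p}$. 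That is off by a factor $1/\varepsilon$, so $A$ would blow up as $\varepsilon\to 0$.

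Your fallback, that a curve from $x$ to $y$ passes near the level-$(x|y)_p$ point, does not help. For arbitrary curves it is false; in general one only gets a bound logarithmic in the curve's length. For near-minimizers it is exactly what Propositions~\ref{prop:local} and~\ref{prop:curve-approx} establish, so it leads back to the paper's route.

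As for what each route buys: yours uses only the $\delta$-inequality and gives $\varepsilon_0$ depending on $\delta$ alone. It also reproves the lower bound in Lemma~\ref{lem:two-point} without Theorem~\ref{thm:G-H}. However, it is tied to the density $e^{-\varepsilon|\cdot-p|}$, whose logarithm is governed by Gromov products. The paper's inequality holds for every density satisfying \eqref{eq:Harnack-general}, compares $\gamma$ with every competing curve, and is reused later.

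Drop the upper bound $d_\varepsilon(x)\le C\varepsilon^{-1}e^{-\varepsilon|x-p|}$. You never use it, and it is false without rough starlikeness. Attach a segment of length $n$ at each integer point $n$ of a ray; at the tip $x_n$ one has $|x_n-p|=2n$ but $d_\varepsilon(x_n)\ge\varepsilon^{-1}e^{-\varepsilon n}$. The paper proves this bound only under rough starlikeness (Lemma~\ref{lem:varepsilon boundary distance}).

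Likewise, your noncompleteness argument presupposes a rough ray, i.e.\ $\partial_G X\neq\emptyset$, which unboundedness does not guarantee. A star of segments of lengths $1,2,3,\dots$ is a complete unbounded tree with empty Gromov boundary, and its $X_\varepsilon$ is complete. The paper handles this point only by citing Allu--Jose, so the caveat concerns the final clause of the statement as much as your proof.
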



The proof of Theorem \ref{thm:uniform} relies crucially on the following Gehring-Hayman inequality for $(c,\mu)$-quasigeodesics in Gromov hyperbolic spaces.

\begin{thm}[Gehring-Hayman Inequality]\label{thm:G-H}
	Let $X=(X,|\cdot|)$ be a $\delta$-Gromov hyperbolic space, $c\geq 1$ and $\mu> 0$. Let $\rho: X \to (0,\infty)$ be a continuous density satisfying, for some $\varepsilon > 0$ and $C \geq 1$,
	\begin{equation}\label{eq:Harnack-general}
		\frac{1}{C} \exp\left(-\varepsilon|x-y|\right) \leq \frac{\rho(x)}{\rho(y)} \leq C \exp\left(\varepsilon|x-y|\right).
	\end{equation}
	Then there exist constants $\varepsilon_0 = \varepsilon_0(\delta, C, c, \mu) > 0$ and $K = K(\delta, C, c,\mu) \geq 1$ such that if $0 < \varepsilon \leq \varepsilon_0$, then for each $\gamma \in \Lambda_{x,y}^{c,\mu}(X,d)$ and each rectifiable curve $\alpha: x \curvearrowright y$, it holds 
	\begin{equation}\label{eq:G-H}
		\ell_\rho(\gamma) \leq K \ell_\rho(\alpha).
	\end{equation}
\end{thm}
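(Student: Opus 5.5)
We follow the strategy of \cite[Theorem 5.1]{BHK}, with two changes: the quasihyperbolic geodesic is replaced by a $(c,\mu)$-quasigeodesic, and the proper-space tools are replaced by V\"ais\"al\"a's results for non-proper spaces \cite{Vaisala}. Fix $\gamma\in\Lambda^{c,\mu}_{x,y}(X,d)$ and a rectifiable $\alpha\colon x\curvearrowright y$.

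\textbf{Step 1: short curves.} If $|x-y|\le 1$, then $\ell(\gamma)\le(c+\mu)|x-y|$ by \eqref{eq:prop of curve}. Along $\gamma\cup\alpha$ we may assume $\ell(\alpha)\le \ell(\gamma)$, since otherwise the estimate below is trivial after comparing with the point $x$. The Harnack inequality \eqref{eq:Harnack-general} then gives $\rho\simeq\rho(x)$ on both curves, with constants depending only on $C,c,\mu$ once $\varepsilon\le 1$. Hence $\ell_\rho(\gamma)\lesssim \rho(x)|x-y|\le \rho(x)\ell(\alpha)\lesssim\ell_\rho(\alpha)$. A similar bound holds whenever $\ell(\alpha)$ is comparable to $|x-y|$ on the relevant scale. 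The cutoff $\mu_0=\mu\min\{|x-y|,1\}$ is used exactly here: it guarantees that $\gamma$ is genuinely quasigeodesic at small scales.

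\textbf{Step 2: stability and projection.} For $|x-y|\ge1$, $\gamma$ is a $(c,\mu)$-rough quasigeodesic in a $\delta$-hyperbolic length space. By V\"ais\"al\"a's stability theorem for rough quasigeodesics \cite{Vaisala}, $\gamma$ lies within distance $M=M(\delta,c,\mu)$ of any $h$-short arc (or $1$-rough geodesic) from $x$ to $y$, and conversely. This lets us treat $\gamma$ as a thick geodesic. Parametrize $\gamma$ by arclength and cut it into consecutive subarcs $\gamma_j$ of length about $1$. Each point $z$ of $\alpha$ gets a ``projection'' to a nearest piece $\gamma_{j(z)}$.

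\textbf{Step 3: the key estimate.} Let $w\in\gamma_j$ and set $r=\dist(w,\alpha)$. The main obstacle is to show that
\[
\ell(\alpha\cap B(w,2r))\gtrsim e^{\kappa r}
\]
for some $\kappa=\kappa(\delta)>0$; more precisely, that the part of $\alpha$ projecting near $\gamma_j$ has length at least $C_1e^{\kappa\,\dist(\gamma_j,\alpha)}$. This is the standard exponential divergence of curves avoiding a ball in a hyperbolic space. I would derive it from the rough-geodesic version of \cite[Prop.~6.?]{Vaisala}, or prove it directly. For the direct proof, subdivide $\alpha$ into pieces of length $\le\delta$, apply the Gromov-product inequality of Definition~\ref{def:Gromov hyperbolic} dyadically to get a tree-like estimate, and then use the stability from Step 2 to pass from geodesics to $\gamma$.

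\textbf{Step 4: combining.} Given the key estimate, Harnack yields
\[
\ell_\rho(\gamma_j)\lesssim\rho(w)\le C e^{\varepsilon r}\rho(z)
\]
for the relevant points $z\in\alpha$. The choice $\varepsilon\le\varepsilon_0<\kappa$ makes $e^{\varepsilon r}$ absorbed by $e^{\kappa r}$, so $\ell_\rho(\gamma_j)\lesssim e^{(\varepsilon-\kappa)r}\int_{\alpha_j}\rho\,ds$, where $\alpha_j$ is the part of $\alpha$ projecting near $\gamma_j$. Here we must check that each point of $\alpha$ projects to boundedly many pieces $\gamma_j$, modulo a geometric factor. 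This follows because the projection to a quasigeodesic is coarsely Lipschitz, again from Step 2. Summing over $j$ gives $\ell_\rho(\gamma)\le K\ell_\rho(\alpha)$. Completeness is not needed; only the length structure and hyperbolicity are used.
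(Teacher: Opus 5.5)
There is a genuine gap, and it sits at the step you yourself call the main obstacle. The problem is not only that Step~3 is unproved (the reference to a proposition of V\"ais\"al\"a is unspecified). The estimate you need there is not available in the form that Step~4 uses, and you state it in two inequivalent ways.

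\emph{The ball version is false.} The claim $\ell(\alpha\cap B(w,2r))\gtrsim e^{\kappa r}$ with $r=\dist(w,\alpha)$ fails already in $\mathbb{H}^2$. Take $\gamma$ a long geodesic through $w$, and let $w_\pm\in\gamma$ be the points at distance $r$ from $w$. Let $\alpha$ run along $\gamma$ to $w_-$, then out along the geodesic perpendicular to $\gamma$ at $w_-$ to distance $R\geq 2r$ from $\gamma$. It then follows the equidistant curve at distance $R$ over to the perpendicular at $w_+$, comes back down to $w_+$, and continues along $\gamma$. The hyperbolic Pythagorean theorem, $\cosh d=\cosh r\cosh t$, gives $\dist(w,\alpha)=r$. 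Yet $\ell(\alpha\cap B(w,2r))\leq 4r+O(1)$, because all the length sits on the equidistant curve, outside $B(w,2r)$.

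\emph{The projection version does not combine with Step~4.} The other claim, that the part $\alpha_j$ of $\alpha$ projecting near $\gamma_j$ has length $\gtrsim e^{\kappa r_j}$, is plausible. But points of $\alpha_j$ can lie at distance $R\gg r_j$ from $w$. Harnack then only gives $\rho(w)\leq Ce^{\varepsilon|z-w|}\rho(z)$ with $|z-w|$ not controlled by $r_j$. So your inequality $\rho(w)\leq Ce^{\varepsilon r}\rho(z)$ for the relevant $z$ is exactly what fails.

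To rescue this route you would need a distance-graded divergence estimate. It would say that the part of $\alpha$ at distance about $t$ from $\gamma$ moves the coarse projection by at most a constant times $e^{-\kappa t}$ times its length. The additive errors of the coarse projection would have to be controlled, and the pieces $\gamma_j$ would need length $\gg\delta$ rather than $1$. You would then play this against the weight $e^{-\varepsilon t}$ for $\varepsilon<\kappa$. None of this is in the proposal, and it is where all the difficulty lies.

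For comparison, the paper follows \cite[Section 5]{BHK}, whose argument is not the divergence argument you sketch, and it never localizes $\alpha$ around $\gamma$.
\begin{enumerate}
\item Proposition~\ref{prop:local} replaces $\alpha$ by a curve $\widetilde\alpha$ that nearly minimizes $d$-length among curves from $x$ to $y$ with $\rho$-length at most $\ell_\rho(\alpha)$. The additive slack replaces a minimizer, which need not exist without properness. Since $\rho$ varies by a bounded factor on scale $1/\varepsilon$, a long detour of $\widetilde\alpha$ at that scale could be replaced by a $(c,\mu)$-quasigeodesic of smaller $\rho$-length. Hence $\widetilde\alpha$ is a local rough quasigeodesic at scale $\asymp 1/\varepsilon$.
\item Choosing $\varepsilon_0$ small makes this scale exceed the local-to-global threshold $L_0$. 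Proposition~\ref{prop:curve-approx} then gives $\widetilde\alpha\subset B_d(\gamma,L_0)$, and Lemma~\ref{lem:reverse} gives $\gamma\subset B_d(\widetilde\alpha,2cL_0+\mu)$.
\item Theorem~\ref{thm:weighted-close} compares $\rho$-lengths by packing disjoint balls $B_d(x_k,2L)$ along $\gamma$, using the Harnack inequality only at bounded scale.
\end{enumerate}
So the missing idea is that the smallness of $\varepsilon$ is used to force the competitor curve into a bounded neighbourhood of $\gamma$, not to beat a divergence rate.

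A minor point: in Step~1 you cannot simply assume $\ell(\alpha)\leq\ell(\gamma)$. Instead use the initial subarc of $\alpha$ of length $|x-y|$, on which $\rho\asymp\rho(x)$.
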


The Gehring-Hayman inequality as above will be one of our central tools in the proofs of our main results in Sections \ref{sec:uniformity of quasigeodesic}, \ref{sec:uniform spaces are Gromov hyperbolic} and \ref{sec:final uniformization}. 
Recall that the metric space $X_{\varepsilon} = (X, d_ \varepsilon)$ is defined via \eqref{eq:conformal-metric} with the conformal density given by \eqref{eq:density}.
The triangle inequality implies that
\begin{equation}\label{eq:inequality}
	\exp\left(-\varepsilon|x-y|\right)\leq \frac{\rho_{\varepsilon}(x)}{\rho_{\varepsilon}(y)} \leq \exp\left(\varepsilon|x-y|\right).
\end{equation}
This implies in particular that the conformal density as in \eqref{eq:density} satisfies \eqref{eq:Harnack-general} with constant $C=1$. 


Bonk-Heinonen-Koskela \cite[Theorem 3.6]{BHK} proved that every locally compact uniform space, when equipped with the quasihyperbolic metric, is Gromov hyperbolic, quantitatively. Moreover, if the uniform space is additionally bounded, then it is roughly starlike. As a by-product of our general method, we may remove the local compactness assumption.  
 
\begin{thm}\label{thm:Gromov and roughly starlike}
	Let $(\Omega,d)$ be an $A$-uniform space 
	and $k$ the associated quasihyperbolic metric. Then $(\Omega,k)$ is a complete $\delta_1$-Gromov hyperbolic space with $\delta_1=\delta_1(A).$ If additionally $(\Omega,d)$ is bounded, then $(\Omega,k)$ is roughly starlike. Furthermore, every $A$-inner uniform space is $\delta_1$-Gromov hyperbolic with $\delta_1=\delta_1(A)$.
\end{thm}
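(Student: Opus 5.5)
Our approach is to follow the strategy of \cite[Theorem 3.6]{BHK}, but with quasihyperbolic geodesics replaced by quasihyperbolic $(c,\mu)$-quasigeodesics, which always exist because $(\Omega,k)$ is a length space. We proceed in four steps.

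\textbf{Step 1: standard estimates and completeness.} First we record the two basic estimates, valid in any noncomplete rectifiably connected space:
\[
k(x,y)\ge \log\Bigl(1+\frac{\sigma_\Omega(x,y)}{d_\Omega(x)\wedge d_\Omega(y)}\Bigr)
\quad\text{and}\quad
k(x,y)\ge \Bigl|\log\frac{d_\Omega(x)}{d_\Omega(y)}\Bigr|.
\]
These are obtained by integrating along arbitrary curves, using $d_\Omega(z)\le d_\Omega(x)+\ell(\gamma[x,z])$.

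Completeness of $(\Omega,k)$ follows from them. A $k$-Cauchy sequence has $d_\Omega$ bounded above and bounded away from zero by the second estimate. Then $k$ controls $\sigma_\Omega\ge d$ locally by the first estimate, so the sequence is $d$-Cauchy. Its limit cannot lie in $\partial\Omega$, since $d_\Omega$ stays bounded below. Finally $k$ and $d$ are locally comparable, so the sequence converges in $k$.

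For the upper bound on $k$ in a uniform space, we integrate along an $A$-uniform curve using the double cone condition. This gives the standard estimate
\[
k(x,y)\le C(A)\log\Bigl(1+\frac{d(x,y)}{d_\Omega(x)\wedge d_\Omega(y)}\Bigr)+C(A).
\]

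\textbf{Step 2: quasigeodesics are uniform curves.} Fix $c=\mu=1$. We show that every quasihyperbolic $(1,1)$-quasigeodesic in an $A$-uniform space is an $A'$-uniform curve, with $A'=A'(A)$. This replaces \cite[Theorem 2.10]{BHK} (Gehring–Osgood type results for geodesics). We argue as follows:
\begin{itemize}
\item Along such a curve $\gamma$, with $\ell_k(\gamma[z,w])\le k(z,w)+1$, we compare $\gamma$ with an $A$-uniform curve between its endpoints.
\item Using the two-sided bounds from Step 1, a point of $\gamma$ very close to $\partial\Omega$ relative to its distance to both endpoints would force the $k$-length to exceed $k(z,w)+1$.
\item The cone condition is derived by the usual dyadic decomposition of $\gamma$ according to the size of $d_\Omega$, together with the logarithmic bounds.
\item Quasiconvexity $\ell(\gamma)\le A' d(x,y)$ then follows by summing over the dyadic pieces.
\end{itemize}
The additive slack $\mu_0=\min\{k(x,y),1\}$ is harmless because it is at most $1$, and it is small when $x,y$ are $k$-close. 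This step is where we expect the main obstacle. The BHK proof uses compactness and exact geodesics, so the dyadic-counting argument must be redone carefully with the rough additive error.

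\textbf{Step 3: Gromov hyperbolicity.} With uniform quasigeodesics available, we show that quasihyperbolic $(1,1)$-quasigeodesic triangles are uniformly thin. The plan is:
\begin{itemize}
\item For a triangle with sides $\gamma_1,\gamma_2,\gamma_3$ and a point $z\in\gamma_1$, use the cone condition to locate a point $w$ on $\gamma_2\cup\gamma_3$ with $d(z,w)\le C(A)\,d_\Omega(z)$.
\item Then $k(z,w)\le C'(A)$ by the upper estimate of Step 1.
\item Thinness of rough quasigeodesic triangles in a length space implies $\delta_1$-hyperbolicity in the sense of Definition \ref{def:Gromov hyperbolic}, by V\"ais\"al\"a's theory \cite{Vaisala}. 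The $4$-point condition can also be checked directly via the estimate $(x|y)_p\approx k(p,\gamma_{xy})$.
\end{itemize}

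For the inner uniform statement the same argument works verbatim, because only $\sigma_\Omega$ enters the lower bound in Step 1 and the cone argument.

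\textbf{Step 4: rough starlikeness.} When $\Omega$ is bounded, fix $p$ with $d_\Omega(p)$ comparable to $\operatorname{diam}\Omega$. Given $x\in\Omega$, take $\xi\in\partial\Omega$ nearly closest to $x$, and points $x_n\to\xi$. Concatenating uniform curves from $p$ to $x_n$ gives, in the limit, curves from $p$ passing within bounded $k$-distance of $x$. We cannot take a limit of curves, because $\Omega$ is not proper. Instead we use finite pieces: a quasihyperbolic quasigeodesic ray is built by concatenating uniform arcs $p\to x_n$ with $d(x_n,\xi)\to0$, and rough starlikeness is verified in V\"ais\"al\"a's sequential sense. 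The cone condition gives $k(x,\gamma_n)\le C(A)$ for all large $n$, uniformly in $x$.
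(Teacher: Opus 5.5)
Your overall architecture matches the paper's. Quasihyperbolic quasigeodesics are shown to be uniform curves, triangles are then thin, and Lemma~\ref{lem:vaisala-package} gives hyperbolicity; for rough starlikeness, a boundary point nearly closest to $x$ is combined with the cone condition. Step~1 is the argument behind Bonk--Heinonen--Koskela's Proposition~2.8, which the paper cites. For ``locally comparable'' you need the bound of Lemma~\ref{lem:intrinsic-213}, which has no additive constant.

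\textbf{Gap 1: quasiconvexity in Step~2.} Summing the block bounds $\ell_d(\gamma_\nu)\lesssim D2^{-|\nu|}$ only gives $\ell_d(\gamma)\lesssim D=\max_\gamma d_\Omega$. Nothing in the dyadic argument ties $D$ to $d(x,y)$. In fact, no argument that uses the slack only through $\mu_0\le 1$ can work. In the unit disk take $x=0$, $y=(\eta,0)$ with $\eta$ tiny, and the two-segment arc $x\to(0,\frac14)\to y$. It has $\ell_k\le\frac23+O(\eta)\le k(z,w)+1$ on every subarc and satisfies the cone condition, yet its length is about $\frac12\gg d(x,y)$. You mention in passing that $\mu_0$ is small for $k$-close endpoints; this has to be used, at a specific place. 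The paper argues as follows.
\begin{enumerate}
\item Take $y_1',y_2'\in\gamma$ at $d$-arclength $\frac12 d(y_1,y_2)$ from the two ends. The cone condition gives $d_\Omega(y_i')\ge d(y_1,y_2)/(2B_1)$, and $d(y_1',y_2')\le 2d(y_1,y_2)$.
\item Hence $k(y_1',y_2')\le 4A^2\log(1+4B_1)$, so $\ell_d(\gamma[y_1',y_2'])\lesssim d_\Omega(y_1')\wedge d_\Omega(y_2')$. This settles the case $d_\Omega(y_1')\wedge d_\Omega(y_2')\le R_1\,d(y_1,y_2)$.
\item In the opposite case the endpoints are deep and $k(y_1,y_2)\le 1/(c+\mu)$. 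The multiplicative form of the slack, via \eqref{eq:prop of curve}, gives $\ell_k(\gamma)\le(c+\mu)k(y_1,y_2)\le 1$. Hence $\ell_d(\gamma)\le 2\ell_k(\gamma)\,(d_\Omega(y_1)\wedge d_\Omega(y_2))\le 8A^2(c+\mu)\,d(y_1,y_2)$.
\end{enumerate}

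Your Step~3 depends on this. The bound $d(z,w)\le C\,d_\Omega(z)$ alone does not control $k(z,w)$; you also need $d_\Omega(w)\gtrsim d_\Omega(z)$. That comes from placing $w$ at $d$-arclength $\asymp\ell_d(\gamma_1[x,z])$ from the shared vertex, on a side long enough to contain such a point. It is the quasiconvexity of $\gamma_1$ that guarantees such a side exists.

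\textbf{Gap 2: the road in Step~4.} ``Concatenating uniform arcs $p\to x_n$'' does not produce a road in the sense of Definition~\ref{def:road}. Three things are needed and none is automatic:
\begin{itemize}
\item the arcs must be $h$-short for $k$, which a $d$-uniform arc in general is not;
\item their length maps must satisfy $k(g_{ij}u,u)\le\nu$, a fellow-travelling property that must be extracted from hyperbolicity after passing to a subsequence;
\item the endpoints must represent a point of $\partial_G(\Omega,k)$.
\end{itemize}
The paper first proves Proposition~\ref{prop:boundary}. A sequence $d$-converging to $a\in\partial\Omega$ is a Gromov sequence, by the lower bound in \eqref{eq:gp-curve-comparison} and $k(w,\gamma_{nm})\to\infty$ along shrinking quasigeodesics. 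Equivalent Gromov sequences have the same $d$-limit. It then applies V\"ais\"al\"a's Theorem~6.7 to obtain a $(4\delta_1+2h,h)$-road $\alpha_i:w\curvearrowright u_i$ with $u_i\to a$. Only then does your final estimate apply. Once $k(w,u_i)\ge1$, each $\alpha_i$ lies in $\Lambda^{1,h}_{w,u_i}(\Omega,k)$ and is uniform by Step~2 (take $h=1$, since you only treated $\mu=1$). The point of $\alpha_i$ at $d$-arclength $\frac18 d_\Omega(x)$ from $u_i$ then lies within $k$-distance $4A^2\log(1+18B(A,1,h))$ of $x$.
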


The final assertion in Theorem \ref{thm:Gromov and roughly starlike} can be viewed as an extension of \cite[Theorem 1.11]{BHK} to non-proper length spaces, where the authors proved inner uniform domains in $\mathbb{R}^n$, equipped with the quasihyperbolic metric, are Gromov hyperbolic. 

\subsection{Short comments on our approach}
We now briefly comment on the main differences with the preceding foundational work of Bonk-Heinonen-Koskela \cite{BHK}. As was pointed out there, their arguments largely rely on local compactness or properness and thus does not work for non-proper spaces. One key reason for the restriction of their method is that the analysis in \cite{BHK}  relies crucially on \emph{quasihyperbolic geodesics}, which does not always exist in non-proper or infinite-dimensional spaces. 

In the interesting work \cite{Vaisala}, V\"ais\"al\"a has successfully extended some results of Bonk-Heinonen-Koskela to Gromov hyperbolic spaces that are not necessarily proper or geodesic. The key idea of  V\"ais\"al\"a is to use \emph{short curves} (see Section \ref{sec:preliminaries} below for precise definition) to replace quasihyperbolic geodesics. Unlike quasihyperbolic geodesics, short curves exist in general length spaces and enjoy good quasihyperbolic estimates akin to quasihyperbolic geodesics. Using sequences of short curves, one can also define ``geodesic rays to a boundary point", which will be called \emph{roads}. This concept turns out to be useful for studying the identification of the Gromov boundary with the metric boundary of complete Gromov hyperbolic spaces. 

Our starting point is to \emph{find the most general classes of curves} so that the Gehring-Hayman inequality remains valid. In the second author's recent joint work \cite{Guo-Huang-Wang-2025-I}, we proved that the dimension-free Gehring-Hayman inequality holds for \emph{quasigeodesics} in general length spaces. This motivates us to consider the class of (quasihyperbolic) $(c,\mu)$-quasigeodesics as in Definition \ref{def:rough quasigeodesic}. Combining ideas from Bonk-Heinonen-Koskela \cite{BHK} and V\"ais\"al\"a \cite{Vaisala1999}, we are able to prove the Gehring-Hayman inequality for $(c,\mu)$-quasigeodesics, which is essentially the most general class of curves so that Gehring-Hayman inequality holds (as one can easily construct counter-examples to show that the Gehring-Hayman inequality fails for $(c,\mu)$-rough quasigeodesics). Thus, to a large extent, we follow a similar strategy as that of Bonk-Heinonen-Koskela \cite{BHK}, with technical difficulties arising from the generality of (quasihyperbolic) quasigeodesics. A good point is that the class of (quasihyperbolic) $(c,\mu)$-quasigeodesics, as considered in this paper, seems to be the correct family of curves to study the uniformization theory of Gromov hyperbolic spaces. 

\medskip 
\textbf{Organization of the paper.} The paper is organized as follows. Section \ref{sec:preliminaries} contains preliminaries on $h$-short arcs, Rips spaces, and some lemmas needed in the proofs of the main theorems. In Section \ref{sec:Gehring-Hayman}, we prove Theorem~\ref{thm:G-H}, the Gehring–Hayman inequality for 
$(c,\mu)$-quasigeodesics. In Section \ref{sec:uniformity of quasigeodesic}, we prove Theorem~\ref{thm:uniform}, which shows that the conformal deformation of a complete $\delta$-Gromov hyperbolic space is a bounded uniform space. In Section \ref{sec:uniform spaces are Gromov hyperbolic}, we prove Theorem~\ref{thm:Gromov and roughly starlike}. In the final section, Section \ref{sec:final uniformization}, we provide detailed proofs of Theorem~\ref{thm:one-to-one correspondence}. In the appendix, we include a sketch of the proof of Proposition \ref{prop:boundary-id}. 
\medskip 

\textbf{Notation.} For a metric space $(X,d)$, we often write $|x-y|$ for $d(x,y)$.  For $x \in X$ and $r > 0$, $B_d(x, r)$ denotes the open ball. The Hausdorff distance between sets $A,B \subset X$ is denoted $d_H(A,B)$. The notation $x\asymp_{c}y$ means that there exists a constant $C=C(c)>0$ such that $\frac{1}{C}y\leq x \leq C y$. Given two non-negative real numbers $a$ and $b$, we set
\[
a\wedge b=\min\{a,b\}\quad \text{and}\quad a\vee b=\max\{a,b\}.
\]


\section{Preliminaries}\label{sec:preliminaries}


\subsection{Some elementary concepts and estimates}
For $x,y\in \Omega$, we have the following two elementary estimates for quasihyperbolic metric:
\beq\label{eq:quasihyperbolic inequality}
k(x,y)\geq\log\left(1+\frac{\sigma_{\Omega}(x,y)}{d_\Omega(x)\wedge d_\Omega(y)}\right)\geq\log\left(1+\frac{d(x,y)}{d_\Omega(x)\wedge d_\Omega(y)}\right)\geq\left|\log\frac{d_{\Omega}(x)}{d_{\Omega}(y)}\right|
\eeq
and
\beq\label{eq:quasihyperbolic length inequality}
\ell_k(\gamma)\geq\log\left(1+
\frac{\ell_d(\gamma)}
{d_\Omega(x)\wedge d_\Omega(y)}\right).
\eeq

We recall the following lemma from \cite[Lemma 2.13]{BHK}, which gives a useful estimate for the quasihyperbolic metric in uniform/inner uniform domains.
\begin{lem}[{\cite[Lemma 2.13]{BHK}}]\label{lem:intrinsic-213}
	Let $\gamma:[0,1]\to\Omega$ be a rectifiable curve with endpoints
	$x=\gamma(0)$ and $y=\gamma(1)$. Suppose that, for some $A\ge1$ and for all $t\in[0,1]$, it holds
	\begin{equation*}
		\ell_d(\gamma[0,t])
		\wedge
		\ell_d(\gamma[t,1])
		\le
		Ad_\Omega(\gamma(t)).
	\end{equation*}
	Then
	\begin{equation}\label{eq:213-first}
		\begin{aligned}
			\ell_k(\gamma)
			&\le
			2A\log\left[
			\left(1+\frac{\ell_d(\gamma)}{d_\Omega(x)}\right)
			\left(1+\frac{\ell_d(\gamma)}{d_\Omega(y)}\right)
			\right]                                                   \\
			&\le
			4A\log\left(
			1+
			\frac{\ell_d(\gamma)}
			{d_\Omega(x)\wedge d_\Omega(y)}
			\right).
		\end{aligned}
	\end{equation}
	Consequently, if $(\Omega,d)$ is $A$-uniform, then for all $x,y\in\Omega$,
	\begin{equation}\label{eq:j-k-upper}
		k(x,y)
		\le
		4A^2\log\left(1+\frac{d(x,y)}{d_\Omega(x)\wedge d_\Omega(y)}\right).
	\end{equation}
	If $(\Omega,d)$ is an $A$-inner uniform, then for all $x,y\in\Omega$,
	\begin{equation}\label{eq:inner upper}
		k(x,y)
		\le
		4A^2\log\left(1+\frac{\sigma_\Omega(x,y)}{d_\Omega(x)\wedge d_\Omega(y)}\right).
	\end{equation}
\end{lem}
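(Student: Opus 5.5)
The plan for Lemma \ref{lem:intrinsic-213} is to compare the quasihyperbolic length of $\gamma$ with the two cone conditions. Parametrize $\gamma$ by $d$-arc length $s\in[0,L]$ with $L=\ell_d(\gamma)$, and let $s_0=L/2$ be the midpoint. On $[0,s_0]$ the minimum in the hypothesis is attained by the initial piece, so $s\le A\,d_\Omega(\gamma(s))$. The $1$-Lipschitz property of $d_\Omega$ gives $d_\Omega(\gamma(s))\ge d_\Omega(x)-s$. Combining the two bounds,
\[
d_\Omega(\gamma(s))\ \ge\ \max\{s/A,\ d_\Omega(x)-s\}\ \ge\ \frac{s+d_\Omega(x)}{2A},
\]
where the last inequality holds because, writing $m$ for the maximum, $(A+1)m\ge s+(d_\Omega(x)-s)$ and $m\ge s/A$ together imply $2Am\ge s+d_\Omega(x)$ for $A\ge1$. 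A quick check: if $d_\Omega(x)-s\ge s/A$, then $2A(d_\Omega(x)-s)\ge d_\Omega(x)+s$ whenever $d_\Omega(x)\ge s(1+2A)/(2A-1)$, while otherwise $s/A$ dominates. I expect a clean convex-combination argument to give the constant $2A$, and at worst a slightly larger absolute multiple of $A$.

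Integrating then gives
\[
\int_0^{s_0}\frac{ds}{d_\Omega(\gamma(s))}\le 2A\log\Bigl(1+\frac{L}{2d_\Omega(x)}\Bigr)\le 2A\log\Bigl(1+\frac{L}{d_\Omega(x)}\Bigr).
\]
The same argument on $[s_0,L]$, with $y$ in place of $x$, gives the analogous bound with $d_\Omega(y)$. Summing yields the first line of \eqref{eq:213-first}. The second line is immediate, since each factor is at most $1+L/(d_\Omega(x)\wedge d_\Omega(y))$.

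For the consequences, take an $A$-uniform curve $\gamma$ joining $x$ and $y$. It satisfies the hypothesis with constant $A$ and $\ell_d(\gamma)\le A\,d(x,y)$. Hence
\[
k(x,y)\le\ell_k(\gamma)\le 4A\log\bigl(1+A t\bigr),\qquad t=\frac{d(x,y)}{d_\Omega(x)\wedge d_\Omega(y)}.
\]
Then use $\log(1+At)\le A\log(1+t)$ for $A\ge1$, which follows from concavity or Bernoulli's inequality $(1+t)^A\ge1+At$. This gives \eqref{eq:j-k-upper}. The inner uniform case \eqref{eq:inner upper} is identical with $\sigma_\Omega(x,y)$ replacing $d(x,y)$.

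The main obstacle is getting the pointwise lower bound on $d_\Omega$ with the exact constant $2A$ on each half. If the convex combination above gives only $(A+1)$ or $2A+1$, I would split each half further at the point $s=d_\Omega(x)/2$: before it, $d_\Omega\ge d_\Omega(x)/2$; after it, $d_\Omega\ge s/A$. Then I would bound the two resulting integrals separately, adjusting constants so they still fit under $2A\log(1+L/d_\Omega(x))$.
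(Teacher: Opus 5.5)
The paper gives no proof of this lemma; it is quoted from \cite{BHK}. So your argument has to stand on its own, and its central step fails.

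\textbf{The pointwise bound is false.} The inequality $\max\{s/A,\,d_\Omega(x)-s\}\ge (s+d_\Omega(x))/(2A)$ does not hold. Write $D=d_\Omega(x)$ and take $s=AD/(A+1)$. Then the maximum equals $D/(A+1)$, while $(s+D)/(2A)=\frac{(2A+1)D}{2A(A+1)}$ is strictly larger. For $A=1$ and $s=D/2$ this is $D/2$ against $3D/4$.

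This is not merely a weak chain of estimates. Consider, in a half-plane, a curve that runs straight toward the boundary for length $AD/(A+1)$ and then straight back up for at least as long. It satisfies the hypothesis and has $d_\Omega(\gamma(s))=D/(A+1)$ at that point.

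Your own algebra shows what is actually true. Adding $(A+1)m\ge D$ and $Am\ge s$ gives $(2A+1)m\ge s+D$, and $2A+1$ is sharp. That constant is not good enough: it gives $(2A+1)\log(1+L/(2D))$ per half, which exceeds $2A\log(1+L/D)$ once $L/D$ is large. For $A=1$ and $L/D=10$, for instance, $3\log 6>2\log 11$. So no lower bound of the form $d_\Omega(\gamma(s))\ge (s+D)/C$ yields the first line of \eqref{eq:213-first}. The fallback you mention at the end is therefore necessary, not optional.

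\textbf{Your fallback works, and it is short.} Put $v=L/D$.

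If $v\le 1$, then $d_\Omega(\gamma(s))\ge D-s\ge D/2$ on $[0,L/2]$. The half-integral is then at most $v\le 2v\log 2\le 2\log(1+v)\le 2A\log(1+v)$, using concavity of $\log(1+\cdot)$ on $[0,1]$.

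If $v>1$, the part over $[0,D/2]$ is at most $1$, and the part over $[D/2,L/2]$ is at most $A\int_{D/2}^{L/2}ds/s=A\log v$. Since $2A\log(1+v)-A\log v=A\log\frac{(1+v)^2}{v}\ge A\log 4>1$, the total is again at most $2A\log(1+v)$.

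With this replacement, the rest of your proposal is correct as written:
\begin{itemize}
\item summing the two halves gives the first line of \eqref{eq:213-first}, and the second line is immediate;
\item \eqref{eq:j-k-upper} and \eqref{eq:inner upper} follow from $\ell_d(\gamma)\le A\,d(x,y)$ (resp.\ $\le A\,\sigma_\Omega(x,y)$) together with $\log(1+At)\le A\log(1+t)$.
\end{itemize}
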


Next, we recall the following useful concept of short curves. 
\begin{defn}
	For $h\geq 0$, an arc $\alpha: x \curvearrowright y$ is \emph{$h$-short} if $\ell_d(\alpha) \leq |x-y| + h$.
\end{defn}
Note that in a length space X, for every pair of points $x,y\in X$ and every $h>0$, there exists an \emph{$h$-short} arc $\alpha: x \curvearrowright y$. Moreover, every subarc of an \emph{$h$-short} arc is an \emph{$h$-short} arc.

We follow the notations used by V\"{a}is\"{a}l\"{a}~\cite{Vaisala}. 
\begin{defn}[Induced subdivision]\label{def:induced subdivision}
Let $\Delta=(\alpha,\beta,\gamma)$ be an $h$-short triangle in $(X,d)$, which means $\alpha,\beta,\gamma$ are $h$-short, with vertices $x,y,z$, where $\gamma:x\curvearrowright y$. Let $x_\gamma,\,y_\gamma\in\gamma$ be the points determined by
\[
\ell_d(\gamma[x,x_\gamma])=(y|z)_x,
\quad
\ell_d(\gamma[y_\gamma,y])=(x|z)_y.
\]
Set $\gamma_x=\gamma[x,x_\gamma]$, $\gamma_y=\gamma[y_\gamma,y]$, and $\gamma^*=\gamma[x_\gamma,y_\gamma]$. Then $\gamma=\gamma_x\cup\gamma^*\cup\gamma_y$ is the subdivision of $\gamma$ induced by the third vertex $z$.
\end{defn}

In the theory of proper geodesic hyperbolic spaces, \textit{geodesic rays} have turned out to be useful. For example, one can define a Gromov boundary point as an equivalence class of geodesic rays. However in a general length space, geodesic rays may not exist. Following \cite{Vaisala}, we instead work with \textit{roads} -- sequences of $h$-short arcs that serve as their substitute.
\begin{defn}[Road]\label{def:road}
	Let $(X,d)$ be a metric space and let $\nu \geq 0$, $h \geq 0$. A \textit{$(\nu, h)$-road} in $X$ is a sequence $\bar{\alpha}$ of arcs $\alpha_i : x_i \curvearrowright y_i$ with the following properties:
	\begin{enumerate}
		\item\label{road1} Each $\alpha_i$ is $h$-short.
		\item\label{road2} The sequence of lengths $\ell_d(\alpha_i)$ is increasing and tends to $\infty$.
		\item\label{road3} For $i \leqslant j$, the length map $g_{ij} : \alpha_i \to \alpha_j$ with $g_{ij} x_i = x_j$ satisfies $|g_{ij} u - u| \leqslant \nu$ for all $u \in \alpha_i$.
	\end{enumerate}
\end{defn}

Here a map $f : \alpha \to \beta$ is a \textit{length map} if $\ell_d(f(\alpha[u,v])) = \ell_d(\alpha[u,v])$
for all $u, v \in \alpha$, where $\alpha$ and $\beta$ are rectifiable curves with $\ell_d(\alpha) \leq \ell_d(\beta)$.

Observe that Definition~\ref{def:road}\,\eqref{road3} implies that $|x_i - x_j| \leqslant \nu$ for all $i$ and $j$. If $x_i = x_j = x$ for all $i$ and $j$, we say that $\bar{\alpha}$ is a \textit{road from $x$}. The \textit{locus} $|\bar{\alpha}|$ of a road $\bar{\alpha}$ is the union of all arcs $\alpha_i$.

In the case $\nu = 0$, $h = 0$, we have a geodesic ray. More precisely, the locus $|\bar{\alpha}|$ is a geodesic ray, and each $\alpha_i$ is an initial subarc.

According to \cite[Lemma~6.13]{Vaisala}, $\{y_i\}$ is a Gromov sequence and we can define $b=\hat{y}=[\{y_i\}]\in \partial_G X$, which will be introduced shortly below. Write $\bar{\alpha}:\bar{x}\curvearrowright b$ and say $\bar{\alpha}$ joining the sequence $\bar{x}=\{x_i\}$ to $b$. If for any $i$, $x_i=x$, then we write  $\bar{\alpha}:x\curvearrowright b$.

\subsection{Gromov hyperbolic spaces}
There are many equivalent definitions of Gromov hyperbolicity, which can be found in \cite{Vaisala}. By replacing the geodesic rays with roads, we can obtain the definition of a roughly starlike $\delta$-Gromov hyperbolic space.

\begin{defn}[Roughly starlike]\label{def:roughly starlike}
	Let $X$ be a $\delta$-Gromov hyperbolic space, and $H, \nu, h\geq 0$. $X$ is \textit{$(H, \nu, h)$-roughly starlike} with a point $w \in X$, if for each $x \in X$, there exists a $(\nu, h)$-road $\bar{\alpha}: w \curvearrowright a \in \partial_G X$, with $\dist(x, |\bar{\alpha}|) \leq H$.
\end{defn}

Next, we recall the definition of Rips space introduced in~\cite{Vaisala}.
\begin{defn}[Rips space]\label{def:rips}
	Let $h \geq 0$ be a constant. If there is a constant $C > 0$ such that for every $h$-short triangle $\Delta = (\alpha_1, \alpha_2, \alpha_3)$ in $(X,d)$  and for all $w \in \alpha_i$, $i \in \{1, 2, 3\}$, with $\alpha_4 = \alpha_1$ and $\alpha_5 = \alpha_2$, it holds
	\[
	\dist(w, \alpha_{i+1} \cup \alpha_{i+2}) \leq C,
	\]
	then we call $(X, d)$ a \emph{$(C, h)$-Rips space} and  the triangle \emph{$C$-thin}.
\end{defn}

Gromov hyperbolic space are Rips spaces, quantitatively. 
\begin{lem}[{\cite[Theorem 2.35]{Vaisala}}]\label{thm:rips}
	If $(X,d)$ is a $\delta$-Gromov hyperbolic, then it is a $(\delta'(\delta,h),h)$-Rips space with $\delta'(\delta,h) = 3\delta + \frac{3}{2}h$ for each $h > 0$.
\end{lem}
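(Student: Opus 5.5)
The plan is a direct argument from the four-point condition. No thin-triangle machinery is needed. Fix an $h$-short triangle $\Delta=(\alpha_1,\alpha_2,\alpha_3)$ with vertices $x,y,z$. By symmetry it suffices to treat $w\in\alpha_3=\gamma\colon x\curvearrowright y$ and bound $\dist(w,\alpha_1\cup\alpha_2)$, where $\alpha_1\colon y\curvearrowright z$ and $\alpha_2\colon z\curvearrowright x$ (the labelling is immaterial).

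\textbf{Step 1: $w$ has small Gromov product over the side $x\,y$.} Since $\gamma$ is $h$-short and $|x-w|\le \ell_d(\gamma[x,w])$, $|w-y|\le\ell_d(\gamma[w,y])$, we get
\[
(x|y)_w=\tfrac12\bigl(|w-x|+|w-y|-|x-y|\bigr)\le \tfrac12\bigl(\ell_d(\gamma)-|x-y|\bigr)\le \tfrac h2 .
\]
Applying Definition \ref{def:Gromov hyperbolic} with base point $w$ gives $(x|y)_w\ge (x|z)_w\wedge(z|y)_w-\delta$. Hence one of $(x|z)_w$ and $(y|z)_w$ is at most $h/2+\delta$. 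Say $(x|z)_w\le h/2+\delta$; we then look for a point of $\alpha_2\colon x\curvearrowright z$ near $w$.

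\textbf{Step 2: choose a comparison point on $\alpha_2$.} Rewriting gives $(w|z)_x=|x-w|-(x|z)_w\ge |x-w|-h/2-\delta$. We split into two cases.
\begin{itemize}
\item \emph{Case $|x-w|\le |x-z|$.} By continuity of $q\mapsto|x-q|$ along $\alpha_2$, pick $q\in\alpha_2$ with $|x-q|=|x-w|=:t$. Because $\alpha_2$ is $h$-short, $|x-q|+|q-z|\le |x-z|+h$, so $(z|q)_x\ge t-h/2$. The four-point condition at base $x$ gives
\[
(w|q)_x\ge (w|z)_x\wedge(z|q)_x-\delta\ge t-\tfrac h2-2\delta .
\]
Therefore $|w-q|=2t-2(w|q)_x\le h+4\delta$.
\item \emph{Case $|x-w|>|x-z|$.} Take $q=z$. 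From $(w|z)_x\le|x-z|$ and the lower bound above, $|x-w|-|x-z|\le h/2+\delta$. Then
\[
|w-z|=|x-w|+|x-z|-2(w|z)_x
\]
is bounded by a combination of the same quantities, again of order $h+\delta$.
\end{itemize}

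\textbf{Step 3: bookkeeping of constants.} The main obstacle is not conceptual but numerical: arriving exactly at $\delta'=3\delta+\tfrac32h$ rather than some other linear combination. I would try to sharpen the choice of $q$. One option is to pick $q$ by arc length, $\ell_d(\alpha_2[x,q])=\ell_d(\gamma[x,w])$ or $=(w|z)_x$, in the spirit of the induced subdivision in Definition \ref{def:induced subdivision}. I would then redo the estimates, tracking where each $h/2$ and each $\delta$ enters. The one $\delta$ from Step 1 and the one from Step 2 are unavoidable; the remaining slack comes from comparing distances with arc lengths. Any bound of the form $C(\delta,h)$ linear in $\delta$ and $h$ already gives the Rips property, and the stated constant is a matter of optimizing this comparison, as in \cite[Theorem 2.35]{Vaisala}.
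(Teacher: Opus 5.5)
Your Step 1 is right, and both cases of Step 2 are computed correctly. But together they only prove $\dist(w,\alpha_1\cup\alpha_2)\le 4\delta+h$: Case 1 gives $|w-q|\le h+4\delta$, and Case 2 gives $|w-z|\le |x-z|-|x-w|+h+2\delta<h+2\delta$. Since $4\delta+h>3\delta+\frac{3}{2}h$ whenever $\delta>h/2$, this does not establish the stated constant $\delta'(\delta,h)=3\delta+\frac{3}{2}h$. Step 3 only asserts that some optimization exists. Saying that any linear bound gives the Rips property does not close the gap, because the lemma asserts a specific constant, and the paper then fixes $\delta_0=\delta'(\delta,2)=3\delta+3$.

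The loss comes from your choice of comparison point. With $|x-q|=|x-w|$, the worst case of the four-point inequality at $x$ is $(w|q)_x\ge |x-w|-(x|z)_w-\delta$. Then $|w-q|=2|x-w|-2(w|q)_x$ counts the Step 1 slack $(x|z)_w\le\frac{h}{2}+\delta$ twice.

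The missing step is to take instead $q\in\alpha_2$ with $|x-q|=s:=(w|z)_x$. This needs no case split, since $s\le|x-z|$ always. Shortness of $\alpha_2$ gives $(q|z)_x\ge s-\frac{h}{2}$, hence $(w|q)_x\ge s-\frac{h}{2}-\delta$, and
\[
|w-q|=|x-w|+s-2(w|q)_x\le |x-w|-(w|z)_x+h+2\delta=(x|z)_w+h+2\delta .
\]
So $\dist(w,\alpha_2)\le (x|z)_w+2\delta+h$ for every $h$-short arc $\alpha_2\colon x\curvearrowright z$ and every $w\in X$. Inserting your Step 1 bound $(x|z)_w\le\frac{h}{2}+\delta$ gives exactly $3\delta+\frac{3}{2}h$. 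The arc-length choice $\ell_d(\alpha_2[x,q])=(w|z)_x$ that you mention in Step 3 works equally well and gives the same bound.

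The paper gives no proof of its own here and only cites \cite[Theorem 2.35]{Vaisala}. Your Step 1 combined with the estimate $\dist(w,\alpha)\le (x|z)_w+2\delta+h$ is the natural way to obtain that constant.
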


The following lemma provides quantitatively equivalent characterizations of Gromov hyperbolicity via quasigeodesic triangles.
\begin{lem}
	\label{lem:vaisala-package}
	Let $(X,\rho)$ be a length space and for $\mu>0$, let $\mathcal A$ be a family of
	$(1,\mu)$-quasigeodesics joining every pair of points of $X$. Then the following statements are quantitatively equivalent.
	\begin{enumerate}
		\item\label{lem:result1} Every triangle whose sides belong to $\mathcal A$ is $\vartheta_1$-thin.
		\item\label{lem:result2} $(X,\rho)$ is $\vartheta_2$-Gromov hyperbolic.
	\end{enumerate}
	In particular, if \eqref{lem:result1} or \eqref{lem:result2} holds, for every
	$\alpha:b\curvearrowright c$ in $\mathcal A$ and every $p\in X$, we have
	\begin{equation*}
		\rho(p,\alpha)-\vartheta_3
		\le (b|c)_p^\rho
		\le \rho(p,\alpha)+\vartheta_3.
	\end{equation*}
\end{lem}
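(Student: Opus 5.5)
This lemma is a quantitative version of standard facts from V\"ais\"al\"a's theory, with $h$-short arcs replaced by $(1,\mu)$-quasigeodesics. My plan is to reduce both implications to the $h$-short case, where Lemma \ref{thm:rips} and its converse from \cite{Vaisala} are available.

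The first observation is that a $(1,\mu)$-quasigeodesic $\alpha\colon b\curvearrowright c$ satisfies $\ell_\rho(\alpha[z,w])\le \rho(z,w)+\mu$ for all $z,w\in\alpha$. Hence every subarc is $\mu$-short, and in particular $\alpha$ itself is $h$-short with $h=\mu$. So triangles with sides in $\mathcal A$ are $\mu$-short triangles.

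For \eqref{lem:result2}$\Rightarrow$\eqref{lem:result1}, I apply Lemma \ref{thm:rips} with $h=\mu$. It gives that every $\mu$-short triangle is $(3\vartheta_2+\tfrac32\mu)$-thin, so we may take $\vartheta_1=3\vartheta_2+\tfrac32\mu$.

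For \eqref{lem:result1}$\Rightarrow$\eqref{lem:result2}, I follow the classical argument that thin geodesic triangles imply the four-point condition. Fix $p$ and $b,c$, and let $\alpha\in\mathcal A$ join $b$ to $c$. The main step is the distance estimate $(b|c)_p\approx \rho(p,\alpha)$ up to an additive constant.

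\emph{Upper bound.} For any $u\in\alpha$, the triangle inequality gives $\rho(b,u)+\rho(u,c)\le \ell(\alpha)\le \rho(b,c)+\mu$. This yields $(b|c)_p\le \rho(p,u)+\mu/2$, and hence $(b|c)_p\le \rho(p,\alpha)+\mu/2$. This direction needs no thinness.

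\emph{Lower bound.} Form the triangle with sides $\alpha$, $\beta\colon p\curvearrowright b$ and $\gamma\colon p\curvearrowright c$, all in $\mathcal A$. Parametrize $\alpha$ by arclength from $b$. By continuity, some $u\in\alpha$ has $\dist(u,\beta)\le\vartheta_1$ and $\dist(u,\gamma)\le\vartheta_1$, because the sets of points $\vartheta_1$-close to $\beta$ and to $\gamma$ are closed, cover $\alpha$, and contain $b$ and $c$ respectively. Pick $v\in\beta$ and $w\in\gamma$ within $\vartheta_1$ of $u$. Since $\beta$ and $\gamma$ are $\mu$-short, $\rho(p,b)\ge \rho(p,v)+\rho(v,b)-\mu$, and similarly for $\gamma$. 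Combining these with the triangle inequality through $u$ gives $(b|c)_p\ge \rho(p,u)-2\vartheta_1-\mu\ge\rho(p,\alpha)-2\vartheta_1-\mu$.

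This is the ``in particular'' estimate with $\vartheta_3=2\vartheta_1+\mu$. The step I expect to be the main obstacle is this lower bound: the bookkeeping of the shortness errors has to come out additive.

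Given the estimate, the four-point condition follows as in the geodesic case. For $x,y,z,p$, take $\alpha_{xz},\alpha_{xy},\alpha_{yz}\in\mathcal A$. By thinness of the triangle $(\alpha_{xy},\alpha_{yz},\alpha_{xz})$, every point of $\alpha_{xz}$ lies within $\vartheta_1$ of $\alpha_{xy}\cup\alpha_{yz}$. Therefore
\[
\rho(p,\alpha_{xz})\ge \min\{\rho(p,\alpha_{xy}),\rho(p,\alpha_{yz})\}-\vartheta_1 .
\]
Converting each distance to a Gromov product via the estimate gives $(x|z)_p\ge (x|y)_p\wedge(y|z)_p-\vartheta_1-2\vartheta_3$. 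So $X$ is $\vartheta_2$-hyperbolic with $\vartheta_2$ depending only on $\vartheta_1$ and $\mu$.

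Finally, if \eqref{lem:result2} holds, we first pass to \eqref{lem:result1} via the first implication and then obtain the estimate as above. So the ``in particular'' conclusion holds in either case, with $\vartheta_3$ depending only on the data.
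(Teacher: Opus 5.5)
Your proof is correct and follows essentially the same route as the paper. You get (2)$\Rightarrow$(1) from Lemma~\ref{thm:rips} with $h=\mu$, and (1)$\Rightarrow$(2) from a connectedness argument on $\alpha$ giving $\rho(p,\alpha)-2\vartheta_1-\mu\le(b|c)_p\le\rho(p,\alpha)+\mu/2$; thinness plus this estimate then yields the four-point condition with a constant of order $3\vartheta_1+\tfrac32\mu$. The only cosmetic difference is that the paper runs the connectedness step through the sets $\{z\in\alpha:(p|b)_z\le\vartheta_1+\mu/2\}$ and $\{z\in\alpha:(p|c)_z\le\vartheta_1+\mu/2\}$ and cites \cite[Lemma 2.9]{Vaisala} for the upper bound, whereas you pick a point of $\alpha$ simultaneously $\vartheta_1$-close to both other sides and verify the upper bound directly.
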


\begin{proof}
	\eqref{lem:result1}$\Rightarrow$\eqref{lem:result2}: 
	Let $a,b,c,p\in X$ and choose
	$\alpha:b\curvearrowright c$, $\beta:a\curvearrowright c$, and
	$\gamma:a\curvearrowright b$ in $\mathcal A$.
	By the thinness assumption, for $x\in\alpha$,
	there is $y\in\beta\cup\gamma$ such that $\rho(x,y)\le \vartheta_1$. 
	
	We claim that $(a|c)_x^\rho\wedge(a|b)_x^\rho\le \vartheta_1+\mu/2$.
	
	Indeed, if $y\in\beta$,
	then
	\[
	\rho(a,x)+\rho(x,c)\leq\rho(a,y)+\rho(x,y)+\rho(x,y)+\rho(y,c)
	\le \ell_\rho(\beta)+2\vartheta_1
	\le \rho(a,c)+\mu+2\vartheta_1.
	\]
	Hence $(a|c)_x^\rho\le \vartheta_1+\mu/2$. Similarly, if $y\in \gamma$, then one can prove $(a|b)_x^\rho\le \vartheta_1+\mu/2$. This completes the proof of claim. 
	
	We now apply the preceding claim with $a=p$. Set $\Delta=\vartheta_1+\mu/2$, and 
	let
	\[
	E=\{z\in\alpha:(p|b)_z^\rho\le\Delta\},
	\quad
	F=\{z\in\alpha:(p|c)_z^\rho\le\Delta\}.
	\]
	The sets $E$ and $F$ are closed and $\alpha=E\cup F$ by the claim. Observe that $b\in E$ and $c\in F$.
	Since $\alpha$ is connected, $E\cap F\ne\varnothing$. If $z\in E\cap F$,
	then
	\beqq
	\begin{aligned}
		4\Delta
		&\ge 2(p|b)_z^\rho+2(p|c)_z^\rho=2\rho(p,z)+\rho(b,z)+\rho(c,z)-\rho(p,b)-\rho(p,c)\\
		&\geq2\rho(p,\alpha)+\rho(b,c)-\rho(p,b)-\rho(p,c) = 2\rho(p,\alpha)-2(b|c)_p^\rho.
	\end{aligned}
	\eeqq
	Consequently, we have 
	\[
	\rho(p,\alpha)\le(b|c)_p^\rho+2\vartheta_1+\mu.
	\]
	On the other hand, since $\alpha$ is $\mu$-short, \cite[Lemma 2.9]{Vaisala} gives
	\[
	(b|c)_p^\rho\le\rho(p,\alpha)+\frac \mu2.
	\]
	Therefore
	\begin{equation}\label{eq:vaisala-package}
		\rho(p,\alpha)-2\vartheta_1-\mu
		\le (b|c)_p^\rho
		\le \rho(p,\alpha)+\frac \mu2.
	\end{equation}
	For curves $\beta$ and $\gamma$, we also have the similar results with \eqref{eq:vaisala-package}. Using \eqref{eq:vaisala-package}, we obtain
	\beqq
	\begin{aligned}
		(a|c)_p^\rho\wedge(a|b)_p^\rho
		&\le \rho(p,\beta)\wedge\rho(p,\gamma)+\frac \mu2\\
		&\le \rho(p,\alpha)+\vartheta_1+\frac \mu2\\
		&\le (b|c)_p^\rho+3\vartheta_1+\frac{3\mu}{2}.
	\end{aligned}
	\eeqq
	This means that $(X,\rho)$ is $(3\vartheta_1+3\mu/2)$-Gromov hyperbolic.
	
	\eqref{lem:result2}$\Rightarrow$\eqref{lem:result1}: Gromov hyperbolic space is Rips space by Lemma~\ref{thm:rips}, and every $(1,\mu)$-quasigeodesic is $\mu$-short. Thus \eqref{lem:result1} holds.
	
\end{proof}

The next stability result shows that, in Gromov hyperbolic space, short curves and rough quasigeodesics with the same endpoints are close.
\begin{lem}[{\cite[Lemma 3.7]{Vaisala}}]\label{thm:stability}
Let $(X,d)$ be a $\delta$-Gromov hyperbolic space, $\alpha: x \curvearrowright y$ be an $h$-short arc and $\gamma: x \curvearrowright y$ be a $(c,\mu)$-rough quasigeodesic. Then
\[
d_H(\alpha,\gamma) \leq M(\delta,c,\mu,h),
\]
where $M(\delta,c,\mu,h)=1080c^4\delta+25c^2 \max\{h, \mu\}.$ 
\end{lem}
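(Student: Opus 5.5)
The plan is the classical Morse-lemma argument, with geodesics replaced by $h$-short arcs and thin triangles supplied by Lemma~\ref{thm:rips}. Fix $h'>0$ (say $h'=h$, or $h'=1$ if $h=0$), so that by Lemma~\ref{thm:rips} every $h'$-short triangle is $\delta'$-thin with $\delta'=3\delta+\frac32 h'$. We then show two one-sided bounds:
\[
\sup_{u\in\gamma}\dist(u,\alpha)\le D_1
\qquad\text{and}\qquad
\sup_{v\in\alpha}\dist(v,\gamma)\le D_2 ,
\]
with $D_1,D_2$ depending only on $\delta,c,\mu,h$.

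\emph{Step 1 (logarithmic lemma).} Let $\beta:a\curvearrowright b$ be a rectifiable arc and $\sigma:a\curvearrowright b$ an $h'$-short arc. We aim to prove
\[
\dist(u,\sigma)\le \delta'\log_2\bigl(\ell_d(\beta)\vee 1\bigr)+C_0 \qquad\text{for all } u\in\beta .
\]
The proof is by bisection. Split $\beta$ at its length-midpoint $m$ and join $m$ to $a$ and to $b$ by $h'$-short arcs. Thinness of the resulting $h'$-short triangle moves a point lying on an arc from $m$ to within $\delta'$ of the previous short arc. Iterate on the half of $\beta$ containing $u$ until the piece has length $\le 1$; this takes about $\log_2\ell_d(\beta)$ steps, and the errors add up.

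\emph{Step 2 ($\gamma$ lies near $\alpha$).} Let $D=\sup_{u\in\gamma}\dist(u,\alpha)$. First check that $D<\infty$, using $\ell_d(\gamma)\le c|x-y|+\mu$ and Step~1. Pick $u_0\in\gamma$ with $\dist(u_0,\alpha)\ge D-1$. Move along $\gamma$ from $u_0$ in both directions by arclength about $2D$, or stop at an endpoint, reaching points $z,w$. Each of $z,w$ lies within $D$ of points $z',w'\in\alpha$.

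Form the arc $\beta$ consisting of an $h'$-short arc $z'\curvearrowright z$, then $\gamma[z,w]$, then an $h'$-short arc $w\curvearrowright w'$. Since $\alpha[z',w']$ is $h$-short, Step~1 applies. The rough-quasigeodesic inequality gives $|z-w|\ge (\ell_d(\gamma[z,w])-\mu)/c$, so $|z'-w'|$ is comparable to $D$ and $\ell_d(\beta)\lesssim cD+\mu$. The choice of $u_0$ forces $\dist(u_0,\beta\setminus\gamma[z,w])\gtrsim D$, and Step~1 bounds $\dist(u_0,\alpha[z',w'])\ge D-1$ from above. Together these give
\[
D\lesssim \delta'\log(cD+\mu)+C ,
\]
which yields $D\le D_1(\delta,c,\mu,h)$. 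This is the main obstacle: the bookkeeping has to deliver the stated polynomial constant $1080c^4\delta+25c^2\max\{h,\mu\}$. For that I would follow Väisälä's more careful version of the computation, using Gromov products rather than a crude logarithm bound.

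\emph{Step 3 ($\alpha$ lies near $\gamma$).} Fix $v\in\alpha$. By Step~2, every point of $\gamma$ is within $D_1$ of $\alpha$. Let $E$ be the set of points of $\gamma$ lying within $D_1$ of $\alpha[x,v]$, and $F$ the set within $D_1$ of $\alpha[v,y]$. These are closed sets covering $\gamma$, with $x\in E$ and $y\in F$. By connectedness of $\gamma$ there is $u\in E\cap F$, so $u$ is within $D_1$ of points $p\in\alpha[x,v]$ and $q\in\alpha[v,y]$. Hence $|p-q|\le 2D_1$, and since $\alpha$ is $h$-short,
\[
\ell_d(\alpha[p,q])\le 2D_1+h .
\]
Therefore $\dist(v,\gamma)\le |v-p|+|p-u|\le 2D_1+h+D_1$, giving $D_2\le 3D_1+h$ and hence $d_H(\alpha,\gamma)\le M$.
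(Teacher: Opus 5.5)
Your Step~1 is false as stated, and Step~2 uses it in exactly the false direction. The bisection argument bounds the distance from points of the \emph{short arc} to $\beta$. It does not bound the distance from points of $\beta$ to $\sigma$. In the $h'$-short triangle with sides $\sigma$, $[a,m]$, $[m,b]$, thinness puts a point of $\sigma$ within $\delta'$ of $[a,m]\cup[m,b]$, so you can chase a point of $\sigma$ upward toward $\beta$. A point of $[a,m]$, however, is only within $\delta'$ of $\sigma\cup[m,b]$, and it may be close to $[m,b]$ rather than to $\sigma$. So the chase from $\beta$ down to $\sigma$ breaks at the first step.

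Here is a concrete counterexample in the hyperbolic plane. Let $\sigma=[a,b]$ have length $1$, and let $\beta=[a,t]\cup[t,b]$, where $t$ lies on the perpendicular bisector of $\sigma$ at distance $L$ from it. Then $\beta$ is an arc with $\ell_d(\beta)\le 2L+2$, while $\dist(t,\sigma)=L$. This is not $\le\delta'\log_2(2L+2)+C_0$ once $L$ is large.

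In Step~2 you apply Step~1 to $u_0\in\gamma[z,w]\subset\beta$ to bound $\dist(u_0,\alpha[z',w'])$ from above. But bounding the distance from a point of the rough quasigeodesic to the short arc is precisely what the lemma asserts, so Step~2 is circular. Step~3 is correct in itself, but it has no $D_1$ to work with.

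The repair is to reverse the order of your two steps, as in the Bridson--Haefliger proof of the Morse lemma.
\begin{enumerate}
\item Bound $\alpha$ near $\gamma$. Let $D=\sup_{v\in\alpha}\dist(v,\gamma)$ and choose $v_0\in\alpha$ almost realizing it. Take $z,w\in\alpha$ at $d$-arclength $2D$ from $v_0$ (or the endpoints), and nearest points $z',w'\in\gamma$. Form $\beta=[z,z']\cup\gamma[z',w']\cup[w',w]$ with $h'$-short connecting arcs, where $h'\ge h$.
\begin{itemize}
\item The rough-quasigeodesic inequality gives $\ell_d(\beta)\le(6c+2)D+2h'+\mu$.
\item Shortness of $\alpha$ gives $|v_0-z|,|v_0-w|\ge 2D-h$, hence $\dist(v_0,\beta)\ge D-h-h'$ up to an arbitrarily small error.
\item The correct form of Step~1, applied to $v_0\in\alpha[z,w]$, then bounds $D$ linearly in $\delta'$, $h$, $\mu$ (with a $\log c$ factor).
\end{itemize}
\item Bound $\gamma$ near $\alpha$. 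Use your connectedness argument with the roles of the two curves exchanged, using $\ell_d(\gamma[p,q])\le c|p-q|+\mu$ in place of the shortness of $\alpha$. This is the argument of Lemma~\ref{lem:reverse} with $Q=\alpha$ and $S=0$, whose proof uses only the rough-quasigeodesic inequality.
\end{enumerate}
If you want to keep your original order, Step~1 must be replaced by a projection argument onto the short arc $\alpha$, in the spirit of V\"ais\"al\"a's Projection Lemma. The paper's Lemma~\ref{lem:proj2} cannot be used for this, since its proof already invokes the present lemma.

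Separately, you never reach the stated $M=1080c^4\delta+25c^2\max\{h,\mu\}$. Stopping the bisection at unit length, and taking $h'=1$ when $h=0$, leaves an additive constant that does not vanish when $\delta=h=\mu=0$, where $M=0$. To get the stated bound you would need to stop at a scale comparable to $\delta'$, take $h'=h$ (or let $h'\downarrow 0$ when $h=0$), and carry the numbers through. The paper itself gives no proof of this lemma; it quotes V\"ais\"al\"a, and downstream it only uses that $M$ depends on $(\delta,c,\mu,h)$.
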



In the proof of our main results, we also need the following lemma.
\begin{lem}\label{lem:proj1}
Let $(X,d)$ be a $\delta$-Gromov hyperbolic space, $\alpha: x \curvearrowright y$ be an $h$-short arc with $h > 0$ and $\gamma: x \curvearrowright z$ be a $(c,\mu)$-rough quasigeodesic with $|y-z| \leq \tau$. Then for every $w \in \gamma$, there exists $v \in \alpha$ such that
\[
|w - v| \leq \delta'(\delta,h) + M(\delta,c,\mu,h) + h + \tau,
\]
where $\delta'(\delta,h)= 3\delta + \frac{3}{2}h$ is given by Lemma~\textup{\ref{thm:rips}}.
\end{lem}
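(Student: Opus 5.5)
The plan is to reduce to the stability lemma by building an auxiliary short triangle. Choose an $h$-short arc $\beta: x \curvearrowright z$; such an arc exists because $X$ is a length space and $h>0$. Lemma~\ref{thm:stability} applies to the $h$-short arc $\beta$ and the $(c,\mu)$-rough quasigeodesic $\gamma$, which share the endpoints $x,z$. It gives $d_H(\beta,\gamma)\le M(\delta,c,\mu,h)$. So for the given $w\in\gamma$ there is a point $u\in\beta$ with $|w-u|\le M$. It therefore suffices to find $v\in\alpha$ with
\[
|u-v|\le \delta'(\delta,h)+h+\tau .
\]

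To do this, choose an $h$-short arc $\eta: y\curvearrowright z$ and form the $h$-short triangle $(\alpha,\beta,\eta)$. Lemma~\ref{thm:rips} says this triangle is $\delta'$-thin, so $\dist(u,\alpha\cup\eta)\le\delta'$. We split into two cases.

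\emph{Case 1: the near point lies on $\alpha$.} Then there is $v\in\alpha$ with $|u-v|\le\delta'$, and we are done.

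\emph{Case 2: the near point lies on $\eta$.} Then there is $q\in\eta$ with $|u-q|\le\delta'$. Since $\eta$ is $h$-short, $\ell_d(\eta)\le|y-z|+h\le\tau+h$. Hence $|q-y|\le\ell_d(\eta)\le\tau+h$. Take $v=y\in\alpha$. This gives $|u-v|\le\delta'+h+\tau$, as required.

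Combining the two cases with $|w-u|\le M$ yields the stated bound $|w-v|\le \delta'+M+h+\tau$.

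I expect no real obstacle here. The only care points are that short arcs exist (this is why $h>0$ is needed), and that the thinness statement of Lemma~\ref{thm:rips} is used for the side $\beta$ with respect to the other two sides. One should also note that in Case 2 the bound on the distance along $\eta$ uses the $h$-shortness together with $|y-z|\le\tau$. If the sides were degenerate (e.g.\ $y=z$), the argument still goes through trivially, since $\eta$ can be taken as a point or very short arc.
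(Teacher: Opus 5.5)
Your proposal is correct and matches the paper's proof essentially step for step. The paper also takes an auxiliary $h$-short arc $x \curvearrowright z$ and uses Lemma~\ref{thm:stability} to move $w$ onto it. It then adds an $h$-short arc $y \curvearrowright z$, applies the $\delta'$-thinness from Lemma~\ref{thm:rips}, and in the second case takes $v=y$ using $\ell_d \le \tau + h$.
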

\begin{proof}
Let $\beta_0: x \curvearrowright z$ and $\beta: y \curvearrowright z$ be $h$-short arcs with $h > 0$. By Lemma~\ref{thm:stability}, for every $w \in \gamma$ there exists $w' \in \beta_0$ with $|w-w'| \leq M(\delta,c,\mu,h)$.

Since $X$ is $\delta$-Gromov hyperbolic, by Lemma~\ref{thm:rips}, it is a $(\delta'(\delta,h),h)$-Rips space with $\delta'(\delta,h)=3\delta+\frac{3}{2}h$. Hence there exists $w'' \in \alpha \cup \beta$ such that $|w'-w''| \leq \delta'(\delta,h)$.

If $w'' \in \alpha$, set $v = w''$. Then $|w-v| \leq |w-w'| + |w'-w''| \leq M + \delta'(\delta,h)$.

If $w'' \in \beta$, set $v = y$. Then
\[
|w-v| \leq |w-w'| + |w'-w''| + |w''-y| \leq M + \delta'(\delta,h) + \ell_d(\beta) \leq M + \delta' (\delta,h)+ \tau + h.
\]
The proof is thus complete.
\end{proof}

To simplify our notation, from now on, we shall write $\delta_0=\delta'(\delta,2)=3\delta + 3$ and  $$M_0=M(\delta,c,\mu,2)=1080c^4\delta+25c^2\max\{2,\mu\}.$$
 
\begin{lem}\label{quasigeodesic ieq}
	Let $(X,d)$ be a $\delta$-Gromov hyperbolic space, $\gamma \in \Lambda_{x,y}^{c,\mu}(X,d)$ and $p \in X$. Then for any $z \in \gamma$, there exists an endpoint $a\in\{x,y\}$ such that for every $u\in\gamma[a,z]$,  
	$$|p-u|\geq |p-z| + |u-z| - \kappa,$$
 where $\kappa=\kappa(\delta,c,\mu) = 8\delta + 6M_0 + 2\delta_0 + 20$.
\end{lem}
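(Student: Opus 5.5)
The plan is to rewrite the desired inequality as a Gromov product bound based at $z$. Since $|p-u| = |p-z|+|u-z|-2(p|u)_z$, it suffices to find an endpoint $a\in\{x,y\}$ such that $(p|u)_z\le \kappa/2$ for all $u\in\gamma[a,z]$. I will obtain this in two steps: a bound for $(p|a)_z$ at the endpoint, then a transfer to intermediate points $u$ using the $\delta$-hyperbolicity inequality with base point $z$.

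\emph{Step 1: choose $a$.} Fix $2$-short arcs $\alpha:x\curvearrowright y$, $\beta:x\curvearrowright p$ and $\beta':y\curvearrowright p$. By Lemma~\ref{thm:stability}, there is $z'\in\alpha$ with $|z-z'|\le M_0$. By Lemma~\ref{thm:rips}, the triangle $(\alpha,\beta,\beta')$ is $\delta_0$-thin, so $z'$ lies within $\delta_0$ of a point $w\in\beta\cup\beta'$; say $w\in\beta$, and set $a=x$ (otherwise $a=y$). Because $\beta$ is $2$-short,
\[
|x-z|+|z-p|\le |x-w|+|w-p|+2|z-w| \le |x-p|+2+2(M_0+\delta_0).
\]
This is exactly the argument in the claim of Lemma~\ref{lem:vaisala-package}, and it gives $(p|a)_z\le C_1:=M_0+\delta_0+1$.

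\emph{Step 2: control $(a|z)_u$.} Let $u\in\gamma[a,z]$. The subarc $\gamma[a,z]$ is a $(c,\mu)$-rough quasigeodesic by \eqref{eq:prop of subcurve}. Applying Lemma~\ref{thm:stability} to it together with a $2$-short arc $\sigma:a\curvearrowright z$, we get a point $u'\in\sigma$ with $|u-u'|\le M_0$. As above, this yields $(a|z)_u\le M_0+1$. Since $(u|a)_z+(a|z)_u=|u-z|$, it follows that
\[
(u|a)_z\ge |u-z|-M_0-1.
\]

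\emph{Step 3: hyperbolicity at base point $z$.} The $\delta$-hyperbolicity inequality based at $z$ gives
\[
C_1\ge (p|a)_z\ge (p|u)_z\wedge(u|a)_z-\delta .
\]
If the minimum is $(p|u)_z$, then $(p|u)_z\le C_1+\delta$ directly. If the minimum is $(u|a)_z$, then $|u-z|-M_0-1\le C_1+\delta$, and therefore $(p|u)_z\le |u-z|\le C_1+M_0+1+\delta$. In either case
\[
(p|u)_z\le 2M_0+\delta_0+\delta+2,
\]
so $2(p|u)_z\le 4M_0+2\delta_0+2\delta+4\le\kappa$, which is the claimed inequality. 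Note that the choice of $a$ depends only on $z$, not on $u$, as the statement requires.

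The only delicate point is Step 1, that is, producing an endpoint $a$ with $(p|a)_z$ bounded uniformly by chaining the stability lemma with thinness. Once that is in place, Steps 2 and 3 are routine manipulations of Gromov products, and the constants fit comfortably inside $\kappa$.
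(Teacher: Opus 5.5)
Your proof is correct, and it takes a genuinely different route from the paper's.

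The paper works along the short arc. It subdivides the $2$-short arc $\alpha:x\curvearrowright y$ by the third vertex $p$. It then chooses $a$ according to whether the stability point $z'$ lies in $\alpha[x,y_\alpha]$ or in $\alpha[x_\alpha,y]$. Next it moves each $u\in\gamma[a,z]$ to a point $u_0\in\alpha[a,z']$ via Lemma~\ref{lem:proj1}. Finally it quotes an external estimate for short arcs (Lemma~4.1 of Allu et al.), $|p-u_0|\ge|p-z'|+|u_0-z'|-8\delta-16$. The transfer errors $M_0$ and $2M_0+\delta_0+2$ between $\gamma$ and $\alpha$ are what add up to $\kappa$.

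You instead rewrite the goal as $(p|u)_z\le\kappa/2$ and work entirely with Gromov products:
\begin{itemize}
\item you choose $a$ from the thin triangle $(\alpha,\beta,\beta')$ so that $(p|a)_z\le M_0+\delta_0+1$;
\item you get $(a|z)_u\le M_0+1$ from stability applied to the subarc $\gamma[a,z]$;
\item you finish with the four-point condition based at $z$, the identity $(u|a)_z+(a|z)_u=|u-z|$, and the bound $(p|u)_z\le|u-z|$.
\end{itemize}
All of these steps check out, including the two-branch case analysis in Step~3.

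Your route uses only Lemma~\ref{thm:stability}, Lemma~\ref{thm:rips} and the definition of $\delta$-hyperbolicity. It avoids both Lemma~\ref{lem:proj1} and the outside short-arc estimate, and it yields the better constant $4M_0+2\delta_0+2\delta+4$. The paper's route, in return, stays within the tripod picture, locating $z'$ relative to the internal points $x_\alpha,y_\alpha$. The paper reuses that picture, together with the same short-arc estimate, in Lemma~\ref{lem:two-point} and Proposition~\ref{prop:quasiisometry}.
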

\bpf
Let 
$\alpha: x \curvearrowright y$ be a $2$-short arc. 
For $z \in \gamma$, by Lemma~\ref{thm:stability}, there exists $z' \in \alpha$ such that  
\begin{equation}\label{2.1}
	|z-z'| \leq M_0. 
\end{equation}  
Let $\alpha=\alpha_x\cup \alpha^*\cup \alpha_y$ be the subdivision of $\alpha$ induced by the third vertex $p$ (see Definition \ref{def:induced subdivision}). Then $\ell_d(\alpha[x,x_\alpha]) = (y|p)_x$ and $\ell_d(\alpha[y_\alpha,y]) = (x|p)_y$. We first consider the case of $z' \in \alpha_x\cup\alpha^*= \alpha[x,y_\alpha]$.

For $u \in \gamma[x,z]$, by Lemma~\ref{lem:proj1} (with $\tau = M_0$, $h=2$ and $y=z'$), there exists $u_0 \in \alpha[x,z']$ such that
\begin{equation}\label{2.2}
	|u-u_0| \leq 2M_0 + \delta_0 + 2. 
\end{equation}	 
On the other hand, using standard distance estimate on short curves (see \cite[Lemma 4.1]{Allu}), we have 
\begin{equation}\label{2.3}
	|p-u_0| \geq |p-z'| + |u_0-z'| - 8\delta - 16.
\end{equation}
Combining the above two estimates leads to 
\beqq
\begin{aligned}
	|p-u| &\geq |p-u_0| - |u-u_0| \\
	&\stackrel{\eqref{2.2}+\eqref{2.3}}{\geq} |p-z'| + |u_0-z'| - 8\delta - 16 - (2M_0 + \delta_0 + 2) \\
	&\stackrel{\eqref{2.1}}{\geq} |p-z| - M_0 + |u_0-z| - M_0 - 8\delta - 18 - 2M_0 - \delta_0 \\
	&\stackrel{\eqref{2.2}}{\geq} |p-z| + |u-z| - \kappa,
\end{aligned}
\eeqq
where $\kappa=\kappa(\delta,c,\mu) = 8\delta + 6M_0 + 2\delta_0 + 20$.

Symmetrically, if $z' \in \alpha_y\cup\alpha^*=\alpha[y,x_\alpha]$, for $u \in \gamma [z, y]$, we also have $$|p-u|\geq |p-z| + |u-z| - \kappa.$$
This completes the proof of Lemma~\ref{quasigeodesic ieq}.
\epf

Finally, we need an extension of the Projection Lemma of V\"ais\"al\"a \cite[Lemma 3.6]{Vaisala} to $(c, \mu)$-quasigeodesics. 
\begin{lem}[Generalized Projection Lemma]\label{lem:proj2}
Suppose $(X,d)$ is $\delta$-Gromov hyperbolic. Let $\gamma \in \Lambda_{x,y}^{c,\mu}(X,d)$ and $x_1, x_2 \in X$, $y_1, y_2 \in \gamma$ be points such that
\begin{enumerate}\label{arabic*}
\item $|x_i - y_i| = \dist(x_i, \gamma) \geq r > 0$ for $i = 1, 2$;
\item $|x_1 - x_2| < 2r - (4\delta_0 + 2M_0 + 2)$.
\end{enumerate}
Then $|y_1 - y_2| \leq 8\delta_0 + 4M_0 + 4$.
\end{lem}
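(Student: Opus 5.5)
We reduce the statement to Väisälä's Projection Lemma \cite[Lemma 3.6]{Vaisala} for short arcs, via the stability estimate of Lemma~\ref{thm:stability}. We argue by contradiction. Suppose $|y_1-y_2|> 8\delta_0+4M_0+4$.

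First we fix a $2$-short arc $\alpha: x\curvearrowright y$. Every $(c,\mu)$-quasigeodesic is a $(c,\mu)$-rough quasigeodesic by \eqref{eq:prop of subcurve}, so Lemma~\ref{thm:stability} gives $d_H(\alpha,\gamma)\le M_0$. Let $y_i'\in\alpha$ with $|y_i-y_i'|\le M_0$. Then $\dist(x_i,\alpha)\ge \dist(x_i,\gamma)-M_0\ge r-M_0$ and $|x_i-y_i'|\le r+M_0$.

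The main step is a direct thin-triangle argument, rather than quoting Väisälä's lemma verbatim, because the $y_i'$ are only near-closest points. We choose $2$-short arcs $\beta_i:x_i\curvearrowright y_i$, $\sigma: x_1\curvearrowright x_2$ and $\tau: y_1\curvearrowright y_2$, where $\tau$ may be taken to be a subarc-type short arc. The quadrilateral $x_1,y_1,y_2,x_2$ is split by a diagonal into two $2$-short triangles, each $\delta_0$-thin by Lemma~\ref{thm:rips}. Points of $\tau$ are therefore within $2\delta_0$ of $\beta_1\cup\sigma\cup\beta_2$. Consider the point $m\in\tau$ at length distance roughly $2\delta_0+M_0+1$ from $y_1$; it exists because $|y_1-y_2|$ is large.

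We estimate $\dist(m,\gamma)$. By Lemma~\ref{lem:proj1}, applied to $\gamma[y_1,y_2]$ versus $\tau$, and the reverse comparison through $\alpha$, $m$ lies within about $M_0+\delta_0+2$ of $\gamma$. We now rule out each of the three sides as the near point of $m$.
\begin{enumerate}
\item If $m$ is $2\delta_0$-close to a point $q\in\beta_1$, the closest-point property $|x_1-y_1|=\dist(x_1,\gamma)$ and shortness of $\beta_1$ give $|x_1-q|+|q-y_1|\le|x_1-y_1|+2$. Combined with $\dist(x_1,\gamma)\le |x_1-m|+\dist(m,\gamma)$, this forces $|m-y_1|$ to be at most about $2\delta_0+M_0+2$, contradicting the choice of $m$ once the constants are arranged.
\item If $m$ is close to $\beta_2$, the same argument applies symmetrically, now measured from $y_2$.
\item If $m$ is $2\delta_0$-close to $q\in\sigma$, then $|x_1-q|+|q-x_2|\le |x_1-x_2|+2<2r-(4\delta_0+2M_0)$. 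Hence one of $|x_i-q|<r-2\delta_0-M_0$, so $\dist(x_i,\gamma)<r$, contradicting hypothesis~(1).
\end{enumerate}

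The obstacle is bookkeeping the constants $4\delta_0+2M_0+2$ and $8\delta_0+4M_0+4$ so that every case closes. If this fails we will apply Väisälä's Projection Lemma directly to $\alpha$ with $r-M_0$ and use the $M_0$-transfer, which produces constants of exactly this doubled form.
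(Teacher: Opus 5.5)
There is a genuine gap, and it sits in the choice of the test point $m$. Done correctly (with $\dist(m,\gamma)\le M_0$), your case (1) gives $|q-y_1|\le 2\delta_0+M_0+2$ and hence $|m-y_1|\le 4\delta_0+M_0+2$. But you chose $m$ with $\ell_d(\tau[y_1,m])\approx 2\delta_0+M_0+1$, so $|m-y_1|\le 2\delta_0+M_0+1$ holds automatically. The conclusion of case (1) is therefore consistent with your choice, and no bookkeeping of constants turns it into a contradiction. The test point must be far from \emph{both} $y_1$ and $y_2$.

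The paper does this by taking $y_0\in\gamma[y_1,y_2]$ with $|y_0-y_1|=|y_0-y_2|$. It transfers $y_0$ to $y_0'$ on the $2$-short arc $y_1\curvearrowright y_2$ with $|y_0-y_0'|\le M_0$, and then argues directly. The $\sigma$-case contradicts hypothesis (2) exactly as in your case (3). The $\beta_1$-case gives $|x_0-y_1|\le 2\delta_0+M_0+2$, hence $|y_1-y_2|\le 2|y_1-y_0|\le 8\delta_0+4M_0+4$.

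Your contradiction framework can also be repaired by taking $\ell_d(\tau[y_1,m])=4\delta_0+M_0+5$. Then $|m-y_1|\ge 4\delta_0+M_0+3$ because $\tau$ is $2$-short. Your assumption $|y_1-y_2|>8\delta_0+4M_0+4$ gives $|m-y_2|>4\delta_0+3M_0-1>4\delta_0+M_0+2$. So cases (1), (2) and (3) all fail, which contradicts thinness.

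Second, your estimate of $\dist(m,\gamma)$ is derived in the wrong direction and is too weak. Lemma~\ref{lem:proj1} moves points of the rough quasigeodesic onto the short arc, not conversely. What you need is $\dist(m,\gamma)\le M_0$. This follows at once from Lemma~\ref{thm:stability} applied to $\tau$ and $\gamma[y_1,y_2]$, which is a $(c,\mu)$-rough quasigeodesic by \eqref{eq:prop of subcurve}.

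With your bound $M_0+\delta_0+2$, case (3) only yields $\dist(x_i,\gamma)<r+\delta_0+2$, which does not contradict hypothesis (1). The slack in hypothesis (2) is exactly $2\delta_0+M_0$ per side, so there is no room for the extra $\delta_0+2$.

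Finally, the arc $\alpha:x\curvearrowright y$ and the points $y_i'$ play no role in your argument. The fallback to V\"ais\"al\"a's lemma is not a proof: the $y_i'$ are not nearest points of $\alpha$. You also give no control on the distance between nearest points on $\alpha$ and nearest points on $\gamma$, let alone with the stated constants.
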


\begin{proof}
Let 
$\alpha: y_1 \curvearrowright y_2$, $\alpha_1: x_1 \curvearrowright y_1$, $\alpha_2: x_2 \curvearrowright y_2$, and $\alpha_3: x_1 \curvearrowright x_2$ be $2$-short curves. Select $y_0 \in \gamma[y_1, y_2]$ with $|y_0 - y_1| = |y_0 - y_2|$. Then Lemma~\ref{thm:stability} implies that there exists $y_0' \in \alpha$ such that 
$$|y_0-y_0'| \leq M_0.$$ 
Since $X$ is a $\delta$-Gromov hyperbolic, by Lemma~\ref{thm:rips}, it is a $(\delta_0,2)$-Rips space. Hence there exists $x_0 \in \alpha_1 \cup \alpha_2 \cup \alpha_3$ such that $|x_0-y_0'| \leq 2\delta_0$.
\smallskip 

\textbf{Case 1:} $x_0 \in \alpha_3$. 
\smallskip 

In this case, we have 
\beqq
\begin{aligned}
r &\leq \dist(x_1, \gamma) \leq |x_1 - y_0| \\
&\leq |x_1 - x_0| + |x_0 - y_0'| + |y_0' - y_0| \\
&\leq |x_1 - x_0| + 2\delta_0 + M_0.
\end{aligned}
\eeqq
Similarly, $r \leq |x_2 - x_0| + 2\delta_0 + M_0$. It follows that 
\[
2r \leq |x_1 - x_0| + |x_2 - x_0| + 4\delta_0 + 2M_0 \leq |x_1 - x_2| + 2 + 4\delta_0 + 2M_0 < 2r,
\]
which is a contradiction.
\smallskip 

\textbf{Case 2:} $x_0 \in \alpha_1\cup \alpha_2$. 
\smallskip 

By symmetry, we may assume that $x_0\in \alpha_1$. Then
\beqq
\begin{aligned}
|x_1 - y_1| &= \dist(x_1, \gamma) \leq |x_1 - y_0| \\
&\leq |x_1 - x_0| + |x_0 - y_0'| + |y_0' - y_0| \\
&\leq |x_1 - x_0| + 2\delta_0 + M_0.
\end{aligned}
\eeqq
Since $|x_0 - x_1| + |x_0 - y_1| \leq \ell_d(\alpha_1) \leq |x_1 - y_1| + 2$, substituting in the previous estimate gives 
$$|x_0 - y_1| \leq 2\delta_0 + M_0 + 2.$$ 
Consequently, we infer
\beqq
\begin{aligned}
|y_1 - y_2| &\leq 2|y_1 - y_0| \leq 2(|y_1 - x_0| + |x_0 - y_0'| + |y_0' - y_0|) \\
&\leq 2(2\delta_0 + M_0 + 2 + 2\delta_0 + M_0) \\
&= 8\delta_0 + 4M_0 + 4.
\end{aligned}
\eeqq

The proof is complete. 
\end{proof}

\subsection{Gromov boundary}
In this section, we recall the definition of Gromov boundary. In a
proper geodesic Gromov hyperbolic metric space, one could define the Gromov
boundary as the set of equivalence classes of geodesic rays. Since we
are interested in spaces which are merely length, we
follow Väisälä's definition~\cite{Vaisala} using Gromov sequences.
The two definitions agree in proper geodesic hyperbolic spaces.

Let $X$ be a metric space and fix $p\in X$. A sequence
$\{x_n\}\subset X$ is called a \emph{Gromov sequence} if
$(x_i\mid x_j)_p\to\infty$
as  $i,j\to\infty.$
Two Gromov sequences $\{x_n\}$ and $\{y_n\}$ are said to be
\emph{equivalent} if
$(x_i\mid y_i)_p\to\infty$
as  $i\to\infty.$
In Gromov hyperbolic space, this relation is an equivalence relation. Let $\widehat{x}$ denote the equivalence class containing the Gromov
sequence $\{x_n\}$. The \emph{Gromov boundary} of $X$ is defined by
\begin{equation*}
	\partial_G X
	=
	\left\{
	\widehat{x}:
	\{x_n\}\text{ is a Gromov sequence in }X
	\right\}.
\end{equation*}
For proper geodesic Gromov hyperbolic spaces, it is well-known (see \cite[Section 3 and 4]{BHK}) that there exists a metric $d_{\varepsilon,p}$ on $\partial_GX$ such that the metric spaces $(\partial_GX,d_{\varepsilon,p})$ and $(\partial X_{\varepsilon},d_{\varepsilon})$ are quasiisometric. Consequently, for any $z\in X$ and any $\zeta\in \partial X_{\varepsilon}$, there exists a unique $[\{x_n\}]\in \partial_G X$ corresponding to $\zeta$ so that $x_n\stackrel{d_\varepsilon}{\longrightarrow}\zeta,$ when $n\rightarrow \infty$, and that $|z-x_n|\xrightarrow{n\to\infty}\infty$. 

This result holds in our setting as well. 
\begin{prop}\label{prop:boundary-id}
	Let $X$ be a complete $\delta$-Gromov hyperbolic space and $\varepsilon_0=\varepsilon_0(\delta,1,1,1)$ be the constant given by Theorem~\ref{thm:G-H}.
	Then for every $0<\varepsilon\leq\varepsilon_0$, there is a natural identification
	\[
	\Phi:\left(\partial_G X, d_{\varepsilon,p}\right)\longrightarrow
	(\partial X_\varepsilon,  d_\varepsilon)
	\]
	from the Gromov boundary to the metric boundary of $X_\varepsilon$.
	Moreover, $\Phi$ is a quasiisometry. Indeed, there exists $C_0=C_0(\delta)\geq1$ such that for all $\xi,\eta\in\partial_G X$, it holds 
	\begin{equation*}
		\frac{1}{C_0\varepsilon}\,d_{\varepsilon,p}(\xi,\eta)
		\leq d_\varepsilon(\Phi(\xi),\Phi(\eta))
		\leq \frac{C_0}{\varepsilon}\,d_{\varepsilon,p}(\xi,\eta).
	\end{equation*} 
\end{prop}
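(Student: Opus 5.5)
We first estimate $d_\varepsilon$ between interior points by Gromov products, and then pass to the boundary. Here $d_{\varepsilon,p}$ denotes the standard visual (quasi)metric on $\partial_G X$, built from $e^{-\varepsilon(\xi|\eta)_p}$ via Väisälä's chain construction; it is bi-Lipschitz to $e^{-\varepsilon(\xi|\eta)_p}$ for small $\varepsilon$.

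\textbf{Step 1 (interior estimate).} The key estimate, valid for all $x,y\in X$ and $0<\varepsilon\le\varepsilon_0$, is
\[
d_\varepsilon(x,y)\asymp_{\delta} \frac{1}{\varepsilon}\,e^{-\varepsilon(x|y)_p}\min\{1,\varepsilon|x-y|\}.
\]
\begin{itemize}
\item \emph{Upper bound.} Take $\gamma\in\Lambda^{1,1}_{x,y}(X,d)$. Lemma~\ref{quasigeodesic ieq} says that along $\gamma$ the quantity $|p-u|$ grows at least linearly, up to the additive constant $\kappa$, as $u$ moves away from a point of $\gamma$ nearest to $p$. Lemma~\ref{lem:vaisala-package} gives $\dist(p,\gamma)=(x|y)_p+O(1)$. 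Integrating $\rho_\varepsilon$ along the two halves of $\gamma$ therefore gives $\ell_\varepsilon(\gamma)\lesssim \varepsilon^{-1}e^{-\varepsilon(x|y)_p}\min\{1,\varepsilon|x-y|\}$.
\item \emph{Lower bound.} Apply Theorem~\ref{thm:G-H} with $C=1$: $d_\varepsilon(x,y)\ge K^{-1}\ell_\varepsilon(\gamma)$. Then bound $\ell_\varepsilon(\gamma)$ from below by the same computation, using that $\gamma$ is $(1,1)$-rough quasigeodesic so that $|p-u|\le (x|y)_p+|u-z|+O(1)$ along $\gamma$.
\end{itemize}

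\textbf{Step 2 (definition of $\Phi$).} Let $\{x_n\}$ be a Gromov sequence. Step 1 gives
\[
d_\varepsilon(x_n,x_m)\lesssim \varepsilon^{-1}e^{-\varepsilon(x_n|x_m)_p}\to0,
\]
so $\{x_n\}$ is $d_\varepsilon$-Cauchy and converges to a point of $\overline X_\varepsilon$. This limit lies in $\partial X_\varepsilon$ and not in $X$, because $d_\varepsilon(x)\gtrsim \varepsilon^{-1}e^{-\varepsilon|x-p|}$: any curve from $x$ to infinity has $\rho_\varepsilon$-length at least this, by Theorem~\ref{thm:G-H} applied to roads, or directly by $|p-u|\le|p-x|+s$ along arclength $s$. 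Equivalent Gromov sequences have $d_\varepsilon$-distance tending to $0$, so $\Phi$ is well defined. Passing to the limit in Step 1 (the factor $\min\{1,\cdot\}$ becomes $1$ since $|x_n-y_n|\to\infty$ when $\xi\neq\eta$) yields
\[
d_\varepsilon(\Phi\xi,\Phi\eta)\asymp \varepsilon^{-1}e^{-\varepsilon(\xi|\eta)_p}\asymp \varepsilon^{-1}d_{\varepsilon,p}(\xi,\eta).
\]
Here we use that $\liminf (x_n|y_n)_p$ agrees with $(\xi|\eta)_p$ up to $2\delta$. In particular $\Phi$ is injective and bi-Lipschitz.

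\textbf{Step 3 (surjectivity), the main obstacle.} Let $\zeta\in\partial X_\varepsilon$ and take $x_n\in X$ with $d_\varepsilon(x_n,\zeta)\to0$. Then $d_\varepsilon(x_n)\to0$, which forces $|x_n-p|\to\infty$ by the lower bound on $d_\varepsilon(x)$ above. Since $\{x_n\}$ is $d_\varepsilon$-Cauchy, Step 1 gives $e^{-\varepsilon(x_n|x_m)_p}\min\{1,\varepsilon|x_n-x_m|\}\to0$. The difficulty is the degenerate case where $|x_n-x_m|$ is small while the Gromov product is not large. This is handled by
\[
(x_n|x_m)_p\ge |x_n-p|-|x_n-x_m|,
\]
which tends to infinity whenever $|x_n-x_m|\le1/\varepsilon$. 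Hence $\{x_n\}$ is a Gromov sequence whose class maps to $\zeta$.

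Completeness of $X$ is used to guarantee that $\partial X_\varepsilon$ contains no limit points coming from bounded $d$-regions. Indeed, a $d_\varepsilon$-Cauchy sequence with $|x_n-p|$ bounded is $d$-Cauchy, because $\rho_\varepsilon$ is bounded below on bounded sets and Theorem~\ref{thm:G-H} controls the curves involved. Such a sequence therefore converges in $X$, so it cannot converge to a point of $\partial X_\varepsilon$.
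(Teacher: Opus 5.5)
Your proposal is correct. Its backbone is the same as the paper's: everything reduces to the two-point estimate of Lemma~\ref{lem:two-point}. The upper bound comes from integrating $\rho_\varepsilon$ along some $\gamma\in\Lambda^{1,1}_{x,y}(X,d)$ around a center at distance $(x|y)_p+O(1)$ from $p$, and the lower bound comes from Theorem~\ref{thm:G-H} with $C=1$.

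The differences are in the details.
\begin{itemize}
\item \emph{Choice of center.} The paper takes $x_0\in\gamma$ close to the tripod point $x_\alpha$ of a $2$-short arc $\alpha$. There, linear growth of $|p-u|$ on both sides follows from applying \cite[Lemma 4.1]{Allu} to the two halves of $\alpha$. You take a nearest point $z\in\gamma$ to $p$, but Lemma~\ref{quasigeodesic ieq} only gives growth on one side of a given point, so one more line is needed. If at some $u\in\gamma[z,b]$ the lemma selected the side containing $z$, then $|p-z|\ge|p-u|+|u-z|-\kappa$ together with $|p-u|\ge|p-z|$ forces $|u-z|\le\kappa$. Hence beyond the first point $u_1$ with $|u_1-z|=\kappa+1$ the lemma always points toward $b$, which gives two-sided growth with constant $2\kappa+1$.
\item \emph{Well-definedness, injectivity and the bi-Lipschitz bound.} The paper cites BHK and Allu--Jose for these. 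You derive them directly from the two-point estimate, which is more self-contained.
\end{itemize}

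The most substantive difference is surjectivity. The paper shows $|x_n-x_m|\not\to0$ and then asserts $(x_n|x_m)_p\to\infty$. As written this does not follow, because pairs with small $|x_n-x_m|$ can make $e^{-\varepsilon(x_n|x_m)_p}\min\{1,\varepsilon|x_n-x_m|\}$ small without the Gromov product blowing up. Your route closes this properly: first show $|x_n-p|\to\infty$, then use $(x_n|x_m)_p\ge|x_n-p|-|x_n-x_m|$ for pairs with $\varepsilon|x_n-x_m|\le1$, and the two-point estimate for the remaining pairs.

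One point needs reordering. The lower bound $d_\varepsilon(x)\gtrsim\varepsilon^{-1}\rho_\varepsilon(x)$ that you invoke cannot be taken from \eqref{4.3}, because the paper proves \eqref{4.3} using this very proposition. Your Step~2 justification (curves from $x$ to infinity) only bounds the distance to limits of sequences with $|x_n-p|\to\infty$. To apply it to an arbitrary $\zeta\in\partial X_\varepsilon$, you first need your final completeness paragraph, applied to subsequences. A sequence converging to $\zeta$ cannot have a $d$-bounded subsequence: by the two-point estimate that subsequence would be $d$-Cauchy, hence converge in $X$, and since $d_\varepsilon\le d$ it would converge in $d_\varepsilon$ to an interior point. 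With that paragraph placed before Step~3, the argument is complete.
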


We would also like to point out that in \cite[Theorem~1.3]{AlluJose}, a version of Proposition \ref{prop:boundary-id} was claimed; however, the proof seems to be incomplete and requires an additional assumption to guarantee surjectivity. As the proof of Proposition \ref{prop:boundary-id} is very similar to that used in \cite[Proposition 4.13]{BHK}, we present it in the appendix. 



\section{The Gehring-Hayman inequality for quasigeodesics}\label{sec:Gehring-Hayman}

In this section, let $X=(X,d)$ be a $\delta$-Gromov hyperbolic space. First, we prove an auxiliary theorem, which roughly says that if a $(c,\mu)$-quasigeodesic $\gamma$ stays close to a rectifiable curve $\widetilde\alpha$ with the same endpoints, then $\ell_\rho(\gamma)$ is controlled quantitatively from above by $\ell_\rho(\widetilde\alpha)$.

\begin{thm}\label{thm:weighted-close}
Fix $\varepsilon_0>0$, $c\geq1$, $\mu>0$ and $L>0$. Let $\rho: X \to (0,\infty)$ be a continuous density satisfying \eqref{eq:Harnack-general} for some constants $C\geq 1$ and $0<\varepsilon\leq\varepsilon_0$. Then 
 there exists a constant $K=K(\varepsilon_0, C, c,\mu,L)$ such that for each  $\gamma \in \Lambda_{x,y}^{c,\mu}(X,d)$ and each rectifiable curve $\widetilde\alpha:x\curvearrowright y$ with
\begin{equation}\label{eq:close-main}
    \gamma\subset B_d(\widetilde\alpha,L),
\end{equation}
it holds 
 $ \ell_\rho(\gamma)\leq K\ell_\rho(\widetilde\alpha)$.
\end{thm}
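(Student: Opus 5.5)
Throughout, write $\mu_0=\mu\min\{|x-y|,1\}$. The plan is to discretize $\gamma$ into pieces of controlled $d$-length and to charge each piece to a \emph{disjoint} portion of $\widetilde\alpha$ of comparable $d$-length. The Harnack condition \eqref{eq:Harnack-general} then converts $d$-length comparisons into $\rho$-length comparisons with constants depending only on $\varepsilon_0$, $C$, $L$, $c$, $\mu$. The key point is that on sets of $d$-diameter at most $D$, $\rho$ varies by at most a factor $C^2e^{2\varepsilon_0 D}$.

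\textbf{Short case.} Suppose first $\ell_d(\gamma)\le 10L$ (or more generally $\le D_0$ for a fixed $D_0$ chosen below). By \eqref{eq:prop of curve}, $\ell_d(\gamma)\le (c+\mu)|x-y|$, and $|x-y|\le \ell_d(\widetilde\alpha)$. Pick a point $q\in\widetilde\alpha$ and compare $\rho$ on both curves to $\rho(x)$.

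If $\ell_d(\widetilde\alpha)\le D_0$, both curves lie in $B_d(x,D_0)$. Hence
\[
\ell_\rho(\gamma)\le C e^{\varepsilon_0 D_0}\rho(x)\,\ell_d(\gamma)\le C e^{\varepsilon_0D_0}(c+\mu)\,\rho(x)\,\ell_d(\widetilde\alpha)\le C^2e^{2\varepsilon_0D_0}(c+\mu)\,\ell_\rho(\widetilde\alpha).
\]

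If $\ell_d(\widetilde\alpha)> D_0$, the initial subarc of $\widetilde\alpha$ of $d$-length $D_0$ starting at $x$ stays in $B_d(x,D_0)$. So $\ell_\rho(\widetilde\alpha)\gtrsim \rho(x)D_0$, while $\ell_\rho(\gamma)\lesssim\rho(x)D_0$.

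\textbf{General case.} Take $D_0:=4L+1$ and parametrize $\gamma$ by arclength. Choose consecutive points $x=z_0,z_1,\dots,z_N=y$ on $\gamma$ with $\ell_d(\gamma[z_{i-1},z_i])=s$, except possibly the last piece, where $s$ is a large fixed multiple of $c(L+\mu+1)$. Pick $w_i\in\widetilde\alpha$ with $|z_i-w_i|<L$, using \eqref{eq:close-main}. The quasigeodesic inequality \eqref{eq:prop of subcurve} gives, for $i<j$,
\[
|z_i-z_j|\ge \frac{(j-i)s-\mu}{c}.
\]
Hence $|w_i-w_j|\ge (j-i)s/c-\mu/c-2L$, which is at least $(j-i)s/(2c)$ by the choice of $s$. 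The issue is that the $w_i$ need not appear in order along $\widetilde\alpha$, so I will not rely on ordering. Instead I consider the balls $B_i=B_d(w_i,s/(8c))$, which are pairwise disjoint. For each interior index $i$, the curve $\widetilde\alpha$ joins $x$ to $y$, both of which lie at distance $\ge s/(8c)$ from $w_i$ for indices away from the ends. Since $\widetilde\alpha$ passes through the center $w_i$, it contains a subarc inside $B_i$ of $d$-length at least $s/(8c)$. These subarcs are pairwise disjoint. On $B_i$, $\rho$ is comparable to $\rho(z_i)$ up to $C^2e^{2\varepsilon_0(L+s)}$, and on $\gamma[z_{i-1},z_i]$, $\rho$ is also comparable to $\rho(z_i)$. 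This yields
\[
\ell_\rho(\gamma[z_{i-1},z_i])\le K_1\,\ell_\rho(\widetilde\alpha\cap B_i).
\]
Summing over $i$ bounds all but boundedly many pieces, namely those near the ends, where $x$ or $y$ may lie inside $B_i$. The end pieces are handled as in the short case, since near $x$ the curve $\widetilde\alpha$ starts at $x$ itself. Summing the two contributions gives $\ell_\rho(\gamma)\le K\ell_\rho(\widetilde\alpha)$.

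\textbf{Main obstacle.} The hardest part is keeping track of the rough additive constant $\mu_0$, which becomes small only when $|x-y|<1$, together with the case where $\gamma$ is too short to discretize. Both situations are absorbed into the short case by the choice of $D_0$ and $s$. The disjointness argument is exactly what avoids any need for $\widetilde\alpha$ to be ordered compatibly with $\gamma$.
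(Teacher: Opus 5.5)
Your proposal is correct and follows essentially the paper's argument. The paper also has two cases. In the short case it compares $\gamma$, via the Harnack inequality, with an initial subarc of $\widetilde\alpha$ at $x$. In the long case it cuts $\gamma$ into pieces of $d$-length $4cL+\mu$ and uses the quasigeodesic inequality to make the balls $B_d(x_k,2L)$ about the division points pairwise disjoint. It then charges each piece to a subarc of $\widetilde\alpha$ of length at least $L$ inside the corresponding ball.

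The differences are cosmetic. The paper centers its balls on $\gamma$ and assumes $|x-y|>4cL+\mu$, so $\widetilde\alpha$ exits every ball and no end pieces need separate treatment; in your setup, one endpoint of $\widetilde\alpha$ lying outside $B_i$ already suffices. In your write-up, $D_0$ should be chosen of order $s$ rather than $4L+1$, so that the short-case argument genuinely covers the end pieces.
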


\begin{proof}

We consider two cases. 
\smallskip 

\textbf{Case 1:} $|x-y|\leq4cL+\mu$. 
\smallskip 

In this case, we have 
\beqq
\begin{aligned}
\ell_\rho(\gamma)=\int_{\gamma} \rho \, ds \stackrel{\eqref{eq:Harnack-general}}{\leq} C\rho(x) \int_0^{\ell_d(\gamma)} e^{\varepsilon t} \, dt &\stackrel{\eqref{eq:prop of curve}}{\leq} C\rho(x) \cdot e^{\varepsilon (c+\mu)|x-y|}\cdot (c+\mu)|x-y|\\
&  \leq (c+\mu)C\rho(x) e^{\varepsilon_0 (c+\mu)(4cL+\mu) } |x-y| .
\end{aligned}
\eeqq
For the lower bound on $\ell_\rho(\widetilde\alpha)$, select a subcurve $\widetilde\alpha' \subset \widetilde\alpha$ starting from $x$ with  $\ell_d(\widetilde\alpha') = |x-y|$. Then
\[
\ell_\rho(\widetilde\alpha) \geq \ell_\rho(\widetilde\alpha')=\int_{\widetilde\alpha'} \rho \, ds \stackrel{\eqref{eq:Harnack-general}}{\geq} \frac{1}{C}\rho(x) e^{-\varepsilon_0|x-y|} |x-y| \geq \frac{1}{C}\rho(x) e^{-\varepsilon_0(4cL+\mu)}|x-y|.
\]
Combining the above two estimates gives 	
$$\ell_\rho(\gamma) \leq (c+\mu)C^2 e^{\varepsilon_0 (c+\mu+1)(4cL+\mu) }\ell_\rho(\widetilde\alpha).$$

\textbf{Case 2:} $|x-y|>4cL+\mu$.
\smallskip 

Choose a sequence of points $x = x_1, \cdots, x_n, x_{n+1} = y$, $n\geq2$, on $\gamma$ such that $\ell_d(\gamma[x_k, x_{k+1}]) = 4cL+\mu$ for $1\leq k < n$ and $\ell_d(\gamma[x_n, x_{n+1}]) < 4cL+\mu$. 

We claim that $B_d(x_{k_1}, 2L) \cap B_d(x_{k_2}, 2L) = \emptyset$ for $1 \leq k_1 < k_2 \leq n$.

If not, then there exists $z \in B_d(x_{k_1}, 2L) \cap B_d(x_{k_2}, 2L)$. On the one hand, by \eqref{eq:prop of subcurve}, we have 
 $$|x_{k_1} - x_{k_2}| \geq \frac{1}{c}\left(\ell_d(\gamma[x_{k_1}, x_{k_2}])-\mu\right)\geq 4L.$$ 
On the other hand, we have 
 $$|x_{k_1} - x_{k_2}| \leq |x_{k_1} - z| + |z - x_{k_2}| < 4L,$$
 which is clearly a contradiction. This completes the proof of claim.

By \eqref{eq:close-main}, $\widetilde\alpha$ meets each $B_d(x_k,L)$ with $k\in\{1,2,\cdots,n \}$. Note that $\widetilde\alpha$ is not contained in $B_d(x_k,2L)$. In fact, if $\widetilde\alpha\subset B_d(x_k, 2L)$, then $\{x, y \}\subset B_d(x_k, 2L)$ and so $|x-y|< 4L$, which is a contradiction. Thus there is a subcurve $\widetilde\alpha_k$ of $\widetilde{\alpha}$ contained in $\overline{B}_d(x_k,2L)\setminus B_d(x_k,L)$ joining $B_d(x_k,L)$ to $X\setminus B_d(x_k,2L)$. The curves $\widetilde\alpha_k$ are pairwise disjoint with $\ell_d(\widetilde\alpha_k)\geq L$. 

For each $k\in\{1,2,\cdots,n \}$, we have 
\[
\ell_\rho(\widetilde\alpha_k)=\int_{\widetilde\alpha_k} \rho \, ds \stackrel{\eqref{eq:Harnack-general}}{\geq} \frac{1}{C}\rho(x_k) \int_{\widetilde\alpha_k} e^{-\varepsilon|x_k - \widetilde\alpha_k(t)|} \, dt \geq \frac{1}{C}Le^{-2\varepsilon_0 L}\rho(x_k).
\]
On the other hand, for $\gamma_k=\gamma[x_k,x_{k+1}]$, we have
\[
\ell_\rho(\gamma_k)=\int_{\gamma_k} \rho \, ds \stackrel{\eqref{eq:Harnack-general}}{\leq} C\rho(x_k) \int_{\gamma_k} e^{\varepsilon|x_k - \gamma_k(t)|} \, dt \leq C(4cL+\mu) e^{\varepsilon_0 (4cL+\mu)}\rho(x_k).
\]
It follows from the above two estimates that 
$$\ell_\rho(\gamma_k) \leq C^2(4c+\frac{\mu}{L}) e^{\varepsilon_0 (4cL+2L+\mu)} \ell_\rho(\widetilde\alpha_k).$$ 
Summing over $k$, we obtain 
\beqq
\begin{aligned}
	\ell_\rho(\gamma) = \sum_{k=1}^n \ell_\rho(\gamma_k) &\leq C^2(4c+\frac{\mu}{L}) e^{\varepsilon_0 (4cL+2L+\mu)} \sum_{k=1}^n\ell_\rho(\widetilde\alpha_k)\\
	&\leq C^2(4c+\frac{\mu}{L}) e^{\varepsilon_0 (4cL+2L+\mu)} \ell_\rho(\widetilde\alpha). 
\end{aligned}
\eeqq

In both cases, there exists $K=K(\varepsilon_0, C, c,\mu,L)$ such that  $ \ell_\rho(\gamma)\leq K\ell_\rho(\widetilde\alpha)$.
\qedhere
\end{proof}

Following a similar strategy as Bonk-Heinonen-Koskela \cite[Section 5]{BHK}, we shall apply Theorem~\ref{thm:weighted-close} to prove the Gehring-Hayman inequality in Gromov hyperbolic spaces. To be more precise,  given a $(c,\mu)$-quasigeodesic $\gamma\in \Lambda_{x,y}^{c,\mu}(X,d)$ and a rectifiable curve $\alpha:x\curvearrowright y$, we shall construct a rectifiable curve $\widetilde\alpha:x\curvearrowright y$ with $\ell_\rho(\widetilde{\alpha})\leq \ell_\rho(\alpha)$ such that $\gamma\subset B_d(\widetilde\alpha,L)$. 

For this purpose, we first prove  the following result, which roughly says that given any rectifiable curve $\alpha:x\curvearrowright y$ of finite conformal length, we may find a ``nearly" rough quasigeodesic  $\widetilde{\alpha}: x \curvearrowright y$ with $\ell_\rho(\widetilde{\alpha}) \leq \ell_\rho(\alpha)$. The proof is similar to that used in \cite[Lemmas 5.7 and 5.8]{BHK}. 

\begin{prop}\label{prop:local}
	Let $\alpha: x \curvearrowright y$ be rectifiable with $\ell_\rho(\alpha) < \infty$, $c\geq 1$ and $\mu> 0$. Then there exists $\widetilde{\alpha}: x \curvearrowright y$ with $\ell_\rho(\widetilde{\alpha}) \leq \ell_\rho(\alpha)$ such that
	\[
	\ell_d(\widetilde{\alpha}[u,v]) \leq 3(c+\mu)C^2|u-v| + 1
	\]
	whenever $u,v \in \widetilde{\alpha}$, $u \in \widetilde{\alpha}[x,v]$, and $|u-v| \leq \frac{1}{12\varepsilon (c+\mu)C^2}$.
\end{prop}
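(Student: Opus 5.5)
The plan is a shortcutting procedure, as in \cite[Lemmas 5.7 and 5.8]{BHK}. We replace pieces of $\alpha$ that are long compared with the distance between their endpoints by $(c,\mu)$-quasigeodesic pieces. The Harnack condition \eqref{eq:Harnack-general} makes such a replacement $\rho$-cheaper, and a maximality (or limiting) argument produces $\widetilde\alpha$.

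\textbf{The basic shortcut estimate.} Let $u,v\in\alpha$ with $|u-v|\le r_0:=\frac{1}{12\varepsilon(c+\mu)C^2}$, and suppose
\[
\ell_d(\alpha[u,v])> 3(c+\mu)C^2|u-v|+1 .
\]
Choose $\beta\in\Lambda^{c,\mu}_{u,v}(X,d)$, which exists since $X$ is a length space. By \eqref{eq:prop of curve}, $\ell_d(\beta)\le (c+\mu)|u-v|\le 1/(12\varepsilon C^2)$. Every point of $\beta$ is therefore within $d$-distance $1/(12\varepsilon)$ of $u$, and \eqref{eq:Harnack-general} gives
\[
\ell_\rho(\beta)\le C e^{1/12}\rho(u)(c+\mu)|u-v| .
\]
For a lower bound on $\ell_\rho(\alpha[u,v])$, take the initial subarc $\alpha'\subset\alpha[u,v]$ from $u$ of length $s:=\min\{\ell_d(\alpha[u,v]),\,1/\varepsilon\}$. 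It lies within $1/\varepsilon$ of $u$, so $\ell_\rho(\alpha[u,v])\ge C^{-1}e^{-1}\rho(u)\,s$. Two cases:
\begin{itemize}
\item[] If $s=\ell_d(\alpha[u,v])$, then $s>3(c+\mu)C^2|u-v|$.
\item[] If $s=1/\varepsilon$, then $s\ge 12(c+\mu)C^2|u-v|$, since $|u-v|\le r_0$.
\end{itemize}
In both cases $\ell_\rho(\alpha[u,v])\ge \ell_\rho(\beta)$, provided the numerical constants are adjusted: the target is $3e^{-1}\ge e^{1/12}$, which fails, so I would restrict $\alpha'$ to length $\min\{\ell_d,\,1/(4\varepsilon)\}$ to get the factor $e^{-1/4}$ instead of $e^{-1}$. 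The constants $3$ and $12$ in the statement indicate exactly this bookkeeping. The conclusion is that replacing $\alpha[u,v]$ by $\beta$ does not increase $\rho$-length. The additive $+1$ handles the degenerate case of very small $|u-v|$ and gives the strict slack needed below.

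\textbf{Iteration and passage to a limit.} A single replacement can create new bad pairs near the junctions, and there may be infinitely many bad pairs, so a finite procedure is not enough. I would parametrize curves by $\rho$-arclength in order to compare them. Let $\mathcal F$ be the family of curves $\sigma:x\curvearrowright y$ obtained from $\alpha$ by replacing a family of disjoint subarcs $\alpha[u_i,v_i]$ (at most countably many, with $|u_i-v_i|\le r_0$) by quasigeodesics, with $\ell_\rho(\sigma)\le\ell_\rho(\alpha)$. The goal is to greedily exhaust the bad pairs.

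\textbf{The greedy step, following BHK.} Move along $\alpha$ from $x$. At the current point $u$, let $v$ be the \emph{last} point of $\alpha$ with $|u-v|\le r_0$ and $\ell_d(\alpha[u,v])>3(c+\mu)C^2|u-v|+1$; this is a supremum, taken up to a small error if needed. Replace $\alpha[u,v]$ by a quasigeodesic, then continue from $v$. Each step removes $d$-length at least $1$ from the original curve, hence $\rho$-length at least a positive amount on bounded $d$-sets. Since $\ell_\rho(\alpha)<\infty$ and $\ell_d(\alpha)<\infty$, only finitely many steps occur, and no infinite limit is needed.

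\textbf{The main obstacle.} It remains to verify that the resulting $\widetilde\alpha$ has no bad pairs. A pair $u'\in\widetilde\alpha[x,v']$ can straddle several inserted quasigeodesics and original pieces. Within an inserted $\beta$, the quasigeodesic bound $\ell_d\le c|u'-v'|+\mu|u-v|$ is at most $c|u'-v'|+1$ once $\mu r_0\le 1$ is arranged. For straddling pairs, the "last point" maximality in the greedy choice together with the triangle inequality should yield the bound with constant $3(c+\mu)C^2$. If the straddling case resists, the fallback is to define the greedy step relative to the current modified curve rather than the original $\alpha$, and argue finiteness by the same length-drop of at least $1$ per step.
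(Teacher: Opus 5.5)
Your shortcut estimate is sound. In fact your first bookkeeping already works, since $3e^{-1}\approx 1.104>e^{1/12}\approx 1.087$. (The paper integrates $\frac1C\rho(u)e^{-\varepsilon t}$ over the initial piece of length $3(c+\mu)C^2|u-v|$ and uses $1-e^{-t}\ge\frac23 t$ for $t\le\frac14$.)

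The gap is in your main construction. The greedy sweep along the \emph{original} $\alpha$ with a ``last point'' choice never tests pairs $u'\in\beta$, $v'\in\alpha$ beyond $v$, where $\beta:u\curvearrowright v$ is an inserted quasigeodesic. The maximality of $v$ only constrains $\alpha$ relative to the point $u$. Your triangle-inequality sketch gives no bound of the form $\ell_d(\beta[u',v])+\ell_d(\alpha[v,v'])\le 3(c+\mu)C^2|u'-v'|+1$, because the constants of $\beta$ and of the unreplaced piece compound. So the step you call the main obstacle is genuinely unproved. The paragraph on countably many replacements and limits is also unnecessary; it would need a compactness you do not have here.

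Your fallback, however, is a complete proof and should be the argument. As long as the \emph{current} curve $\sigma:x\curvearrowright y$ has a bad pair $u,v$, replace $\sigma[u,v]$ by some $\beta\in\Lambda^{c,\mu}_{u,v}(X,d)$.
\begin{itemize}
\item The shortcut estimate holds for any rectifiable curve, so $\ell_\rho$ does not increase.
\item $\ell_d$ drops by more than $3(c+\mu)C^2|u-v|+1-(c+\mu)|u-v|\ge 1$.
\end{itemize}
Hence fewer than $\ell_d(\alpha)$ replacements are possible. The procedure can only stop at a curve with no bad pair at all, so no straddling analysis is required. If arcs are wanted, pass to a subarc after each step; this decreases both lengths.

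This is exactly the paper's mechanism, stated there without iteration. Let $S$ be the infimum of $\ell_d$ over curves $x\curvearrowright y$ with $\rho$-length at most $\ell_\rho(\alpha)$. The paper takes $\widetilde\alpha$ with $\ell_d(\widetilde\alpha)\le S+\frac12$. A single bad pair would then produce an admissible curve with $\ell_d<S-\frac12$, a contradiction.
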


\begin{proof}
	Let $\Gamma = \{\beta: x \curvearrowright y \mid \ell_\rho(\beta) \leq \ell_\rho(\alpha)\}$ and $S = \inf\limits_{\beta \in \Gamma} \ell_d(\beta) < \infty$. Choose $\widetilde{\alpha} \in \Gamma$ with $\ell_d(\widetilde{\alpha}) \leq S + \frac{1}{2}$. We shall prove that $\widetilde{\alpha}$ is the right curve with desired property. 
	
	 Suppose for contradiction that there are $u,v \in \widetilde{\alpha}$, $u \in \widetilde{\alpha}[x,v]$ with $|u-v| \leq \frac{1}{12\varepsilon (c+\mu)C^2}$, but 
	 \begin{equation}\label{eq:for lower bound on tilde alpha}
	 \ell_d(\widetilde{\alpha}[u,v]) > 3(c+\mu)C^2|u-v| + 1.
	 \end{equation}
	 
	 We claim that for each $\gamma_0 \in \Lambda_{u,v}^{c,\mu}(X,d)$, it holds $\ell_\rho(\gamma_0) \leq \ell_\rho(\widetilde{\alpha}[u,v])$. Suppose we were able to prove this claim. Then the curve $\gamma_0' = \widetilde{\alpha}[x,u] \cup \gamma_0 \cup \widetilde{\alpha}[v,y]\in \Gamma$ with 
	\beqq
	\begin{aligned}
		\ell_d(\gamma_0') &= \ell_d(\widetilde{\alpha}) - \ell_d(\widetilde{\alpha}[u,v]) + \ell_d(\gamma_0) \\
		&< S + \tfrac{1}{2} - 3(c+\mu)C^2|u-v| - 1 + (c+\mu)|u-v| \\
		&\leq S - \tfrac{1}{2},
	\end{aligned}
	\eeqq
	which clearly contradicts with the definition of $S$. Thus it remains to prove the above claim. 
	
	For each $\gamma_0 \in \Lambda_{u,v}^{c,\mu}(X,d)$, by \eqref{eq:prop of curve}, $\ell_d(\gamma_0) \leq (c+\mu)|u-v|$ and so 
	\[
	\ell_\rho(\gamma_0) = \int_{\gamma_0} \rho \, ds\stackrel{\eqref{eq:Harnack-general}}{\leq} C\rho(u) \int_0^{\ell_d(\gamma_0)} e^{\varepsilon t} \, dt \leq C(c+\mu)\rho(u)  e^{\varepsilon (c+\mu)|u-v|} |u-v|.
	\]
	For the lower bound on $\ell_d(\widetilde{\alpha}[u,v])$, using \eqref{eq:for lower bound on tilde alpha} and the elementary inequality $1-e^{-t} \geq \frac{2}{3}t$ for $0 \leq t \leq \frac{1}{4}$, we infer that 
	\beqq
	\begin{aligned}
		\ell_\rho(\widetilde{\alpha}[u,v]) &= \int_{\widetilde{\alpha}[u,v]} \rho \, ds
		\stackrel{\eqref{eq:Harnack-general}}{\geq} \frac{1}{C}\rho(u) \int_0^{3(c+\mu)C^2|u-v|} e^{-\varepsilon t} \, dt \\
		&= \frac{1}{C\varepsilon}\rho(u)\big(1 - e^{-3\varepsilon (c+\mu)C^2|u-v|}\big) \geq 2C(c+\mu)\rho(u)|u-v|.
	\end{aligned}
	\eeqq
	Since $e^{1/(12C^2)} < 2$ for $C \geq 1$, we obtain from the above two estimates that $\ell_\rho(\gamma_0) \leq \ell_\rho(\widetilde{\alpha}[u,v])$. The proof of Proposition \ref{prop:local} is thus complete. 
\end{proof}

Next, we prove that in Gromov hyperbolic spaces, a ``nearly" rough quasigeodesic $\widetilde{\alpha}$ as in the previous proposition always stays in a neighborhood of a $(c, \mu)$-quasigeodesic $\gamma$ with the same end points.  
\begin{prop}\label{prop:curve-approx}
Let $(X,d)$ be $\delta$-Gromov hyperbolic and fix $c, \lambda\geq 1$, $\mu, \xi > 0$. Then there exist $N=N(\lambda,\delta,c,\mu,\xi)$ and $L_0=2\lambda N + 2\xi + 1$ such that for any  $\gamma \in \Lambda_{x,y}^{c,\mu}(X,d)$ and any rectifiable curve  $\widetilde{\alpha}: x \curvearrowright y$, if $\ell_d(\widetilde{\alpha}[u,v]) \leq \lambda|u-v| + \xi$ whenever $u,v \in \widetilde{\alpha}$, $|u-v| \leq L_0$, then 
$$\widetilde{\alpha} \subset B_d(\gamma, L_0).$$
\end{prop}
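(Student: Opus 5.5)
The plan is to follow the Bonk--Heinonen--Koskela argument \cite[Lemma 5.8]{BHK}, using Rips thinness of short arcs in place of geodesics. First I replace $\gamma$ by a $2$-short arc $\alpha:x\curvearrowright y$. By Lemma~\ref{thm:stability}, $d_H(\alpha,\gamma)\le M_0$, so it suffices to prove $\widetilde\alpha\subset B_d(\alpha,L_0-M_0)$. I choose $N$ large in terms of $\delta,c,\mu$ and $\lambda,\xi$, so that $L_0=2\lambda N+2\xi+1$ absorbs $M_0$ and the other additive constants.

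Suppose some $z\in\widetilde\alpha$ has $D:=\dist(z,\alpha)\ge L_0-M_0$. Let $\widetilde\alpha[u_0,v_0]$ be the maximal subarc through $z$ lying outside $B_d(\alpha,R)$, where $R=N$ (or a fixed fraction of $L_0$). Its endpoints satisfy $\dist(u_0,\alpha)=\dist(v_0,\alpha)=R$, or they are $x,y$ themselves, which lie on $\alpha$; since $D>R$, the first alternative holds. Subdivide $\widetilde\alpha[u_0,v_0]$ into consecutive points $u_0=w_0,w_1,\dots,w_m=v_0$ with $|w_{i}-w_{i+1}|\le N$ and $m$ minimal; this uses continuity of the curve. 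The local hypothesis applies because $N\le L_0$, giving $\ell_d(\widetilde\alpha[w_i,w_{i+1}])\le\lambda N+\xi$ for consecutive points chosen at distance about $N$. Hence consecutive points are at distance at least about $(N-\xi)/\lambda$ apart in length, and
\[
\ell_d(\widetilde\alpha[u_0,v_0])\ \ge\ m\cdot(\text{const}).
\]

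The key hyperbolic estimate is the standard fact that a path staying at distance at least $R$ from a short arc $\alpha$ has length at least exponential in $R$ times the distance between the projections. I would prove this by iterated bisection using Lemma~\ref{lem:proj2}. Project each $w_i$ to a nearest point $y_i\in\alpha$, up to a small error, since nearest points need not exist; an $\eta$-near point suffices with slightly worse constants. Since $|w_i-w_{i+1}|\le N<2R-(4\delta_0+2M_0+2)$ once $R$ is chosen as a suitable multiple of $N$, Lemma~\ref{lem:proj2} gives $|y_i-y_{i+1}|\le 8\delta_0+4M_0+4$. Thus $|y_{0}-y_{m}|\lesssim m$, which is only a linear bound and is too weak. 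To get the exponential gain, I would instead use the thin-triangle/dyadic argument: a short arc from $u_0$ to $v_0$ passes within $\approx(u_0|v_0)_{y}$ of $\alpha$. Alternatively, comparing $z$ with the triangle $u_0,v_0$ and their projections, Lemma~\ref{quasigeodesic ieq}-type estimates give $|u_0-v_0|\ge 2(D-R)-O(\delta_0+M_0)$. Combining this with the local quasigeodesic property, or simply with chords, yields $\ell_d(\widetilde\alpha[u_0,v_0])\gtrsim e^{c'(D-R)/\delta_0}$ via the BHK halving argument (\cite[Lemma 5.8 / Prop.~5.2]{BHK}, here with Lemma~\ref{thm:rips} thin triangles of $2$-short arcs).

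The contradiction then comes from the upper bound. Since $z$ is far from $\alpha$, thinness forces $u_0,v_0$ to be far from $z$ only linearly. On the other hand, the local condition at scale $N$ makes $\widetilde\alpha$ a $(\lambda,\xi)$-quasigeodesic at scales up to $L_0$. The Morse-type principle that local quasigeodesics at a sufficiently large scale are global quasigeodesics in $\delta$-hyperbolic spaces (proved via the exponential divergence above) then shows that $\widetilde\alpha$ is a global rough quasigeodesic with constants depending on $\lambda,\xi,\delta$. Lemma~\ref{thm:stability} then bounds $D$ by a constant $M'$, and choosing $N$ so that $L_0>M'+M_0$ finishes the proof. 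The main obstacle is this local-to-global step, namely choosing $N$ in terms of $\lambda,\xi,\delta_0$. I would first try the classical argument (Coornaert--Delzant--Papadopoulos) adapted to $2$-short arcs, with Rips constant $\delta_0$ replacing geodesic thinness.
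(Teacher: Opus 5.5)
Your proposal rests on the local-to-global principle, and that is where the proof is missing. The reduction itself is sound. With arclength parametrization, the hypothesis makes $\widetilde\alpha$ an $L_0$-local $(\lambda,\xi)$-quasigeodesic. Once $L_0$ exceeds a threshold $k_0(\delta,\lambda,\xi)$, the resulting global constants do not depend on $N$, so choosing $N$ with $L_0>M'+M_0$ is not circular.

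The problem is the principle itself. The Coornaert--Delzant--Papadopoulos theorem is proved for geodesic spaces, $X$ is only a length space, and you leave the adaptation open. Your sketch of it does not work as stated:
\begin{itemize}
\item The bound $|u_0-v_0|\ge 2(D-R)-O(\delta_0+M_0)$ does not follow from thinness. Thin triangles control short arcs, not $\widetilde\alpha$, and ruling out a long detour of $\widetilde\alpha[u_0,v_0]$ is exactly what has to be shown.
\item An exponential lower bound for $\ell_d(\widetilde\alpha[u_0,v_0])$ contradicts nothing unless you have an upper bound. The hypothesis supplies one only when $|u_0-v_0|\le L_0$.
\end{itemize}
The route can be rescued by citation. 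For instance, embed $X$ isometrically in a complete geodesic $\delta$-hyperbolic space (Bonk--Schramm), where curve lengths are unchanged, and apply the classical theorem there. As written, though, the heart of the proposition is deferred.

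You also abandoned the projection approach for the wrong reason: the linear estimate suffices, and this is what the paper does.
\begin{itemize}
\item It works with the $1$-Lipschitz function $f(t)=\dist(\widetilde\alpha(t),\gamma)$ and with pairs $s\le t$ such that $f(s)=f(t)\ge k:=L_0/2$ and $f\ge f(s)$ on $[s,t]$.
\item For $x'=\widetilde\alpha(s)$, $y'=\widetilde\alpha(t)$, it takes points $x_1,y_1$ at distance exactly $R$ from $x',y'$ on $2$-short arcs to their nearest points on $\gamma$. It joins them by $\gamma_3\in\Lambda^{c,\mu}_{x_1,y_1}(X,d)$ and shows $\dist(\widetilde\alpha[x',y'],\gamma_3)\ge r$.
\item It cuts $\widetilde\alpha[x',y']$ into pieces of length $1+\lambda K$, with $K=8\delta_0+4M_0+4$. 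By Lemma~\ref{lem:proj2}, the projection to $\gamma_3$ moves by at most $K$ per piece, so $|x'-y'|\le 2R+nK$.
\item If $|x'-y'|\le L_0$, the hypothesis gives $n\le(\lambda|x'-y'|+\xi)/(1+\lambda K)+1$. Since $\lambda K/(1+\lambda K)<1$, this forces $|x'-y'|\le N$.
\item Hence either $t-s>2k$ or $t-s\le\lambda N+\xi<k$. The real-variable lemma \cite[Lemma 5.15]{BHK} then gives $\max f<\frac32 k<L_0$.
\end{itemize}
Your sketch lacks three ingredients. The long pieces turn the linear projection bound into a contraction. The auxiliary curve $\gamma_3$ keeps the endpoint error at $R$ rather than at the level $f(s)\ge k$. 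The level-set lemma disposes of long excursions with $|x'-y'|>L_0$, where the hypothesis says nothing, without any exponential-divergence or local-to-global input.
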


\begin{proof}
	Set 
	\[
	R = R(\lambda,\delta,c,\mu)=2 + 4\delta_0 + 4M_0 + \lambda(4\delta_0 + 2M_0 + 2),
	\]
	\[
	N=N(\lambda,\delta,c,\mu,\xi)= \big(1 + \lambda(8\delta_0 + 4M_0 + 4)\big)\big(2R + (8\delta_0 + 4M_0 + 4)\big) + (8\delta_0 + 4M_0 + 4)\xi,
	\]
	and $L_0=2\lambda N + 2\xi + 1$.
	
	Given $\widetilde{\alpha}$ as in the proposition, we may assume $\widetilde{\alpha}$ is parametrized by $d$-arc-length and
	define $f: [0, \ell_d(\widetilde{\alpha})] \to [0,\infty)$ by $f(t) = \dist(\widetilde{\alpha}(t), \gamma)$. Then $f(0) = f(\ell_d(\widetilde{\alpha})) = 0$. For $s,t \in [0, \ell_d(\widetilde{\alpha})]$,
	\[
	|f(s) - f(t)| \leq |\widetilde{\alpha}(s) - \widetilde{\alpha}(t)| \leq |s-t|.
	\]
	
	Set $k = \frac{1}{2}L_0$ and 
	$$M_k = \{(s,t) \in [0,\ell_d(\widetilde{\alpha})] \times [0,\ell_d(\widetilde{\alpha})] : s \leq t \text{ and } k \leq f(s) = f(t) \leq f(p) \text{ for } s \leq p \leq t\}.$$  For $(s,t)\in M_k$ and all $p\in [s,t]$, we have
	\begin{equation}\label{3.4}
	\dist(\widetilde{\alpha}(p), \gamma) \geq \dist(\widetilde{\alpha}(s), \gamma) = \dist(\widetilde{\alpha}(t), \gamma) \geq k = \tfrac{1}{2}L_0 \geq R.
	\end{equation}
	
	\textbf{Claim:} either $|\widetilde{\alpha}(s) - \widetilde{\alpha}(t)| > L_0$ or $|\widetilde{\alpha}(s) - \widetilde{\alpha}(t)| \leq N$.
	\smallskip 
	
	To prove the above claim, write $x'=\widetilde{\alpha}(s), y'=\widetilde{\alpha}(t)$ and take $x_0, y_0 \in \gamma$ such that 
	$$|x'-x_0| = |y'-y_0| = \dist(\widetilde{\alpha}[x', y'],\gamma) \geq R.$$ Let $\beta_1: x' \curvearrowright x_0$, $\beta_2: y' \curvearrowright y_0$ be $2$-short curves. Choose $x_1 \in \beta_1$, $y_1 \in \beta_2$ such that $|x'-x_1| = |y'-y_1| = R$ and select $\gamma_3 \in \Lambda_{x_1,y_1}^{c,\mu}(X,d)$. We first show that 
	\begin{equation}\label{eq:lower bound on alpha gamma3}
	\dist(\widetilde{\alpha}[x', y'], \gamma_3) \geq r := R - 2\delta_0 - 2M_0 = 2 + 2\delta_0 + 2M_0 + \lambda(4\delta_0 + 2M_0 + 2).
	\end{equation}
	
	Suppose for contradiction that there exists $z \in \widetilde{\alpha}[x', y']$, $z_1 \in \gamma_3$ such that $|z-z_1| < r$. Let $\beta_3: x_1 \curvearrowright y_1$, $\beta_4: x_0 \curvearrowright y_0$ be $2$-short curves. Thus by Lemma~\ref{thm:stability}, there exists $z_1' \in \beta_3$ such that 
	\begin{equation}\label{3.1}
		|z_1 - z_1'| \leq M_0.
	\end{equation}
	Since $X$ is a $(\delta_0,2)$-Rips space by Lemma~\ref{thm:rips}, there exists $z_0' \in \beta_1[x_0,x_1] \cup \beta_2[y_0,y_1] \cup \beta_4$ such that 
	\begin{equation}\label{3.2}
		|z_1' - z_0'| \leq 2\delta_0
	\end{equation}
	We consider two cases according to the position of $z_0'$.
	\smallskip 
	
	\textbf{Case 1:} $z_0' \in \beta_4$. 
	\smallskip 
	
	In this case, there exists $z_0 \in \gamma$ such that $|z_0' - z_0| \leq M_0$. By \eqref{3.4}, \eqref{3.1} and \eqref{3.2}, we have
	\beqq
	\begin{aligned}
		R &\leq |z - z_0| \leq |z-z_1| + |z_1-z_1'| + |z_1'-z_0'| + |z_0'-z_0| < r + 2M_0 + 2\delta_0 = R,
	\end{aligned}
	\eeqq
	which is a contradiction.
	\smallskip 
	
	\textbf{Case 2:} $z_0' \in \beta_1[x_0,x_1]\cup \beta_2[y_0,y_1]$. 
	\smallskip 
	
	By symmetry, we may assume $z_0' \in \beta_1[x_0,x_1]$. Then by \eqref{3.1} and \eqref{3.2}, we have
	\beqq
	\begin{aligned}
		|x'-x_0| &= \dist(\widetilde{\alpha}[x', y'],\gamma) \leq \dist(z,\gamma) \leq |z-x_0| \\
		&\leq |z-z_1| + |z_1-z_1'| + |z_1'-z_0'| + |z_0'-x_0| \\
		&< r + M_0 + 2\delta_0 + |x_1-x_0| + 2 < R - 2 + |x_1-x_0| \\
		&= |x'-x_1| + |x_1-x_0| - 2\leq \ell_d(\beta_1) - 2 \leq |x'-x_0|,
	\end{aligned}
	\eeqq
	which again is a contradiction. 
	
	In both cases, we have proved \eqref{eq:lower bound on alpha gamma3}. 
	
	Next, choose a sequence of points $x' = w_0, w_1, \cdots, w_{n-1}, w_n = y'$ on $\widetilde{\alpha}[x', y']$ so that 
	\[
	\ell_d(\widetilde\alpha[w_k, w_{k+1}]) = 1 + \lambda(8\delta_0 + 4M_0 + 4), \quad 0 \leq k \leq n-2,
	\]
	and $$\ell_d(\widetilde\alpha[w_{n-1}, w_n]) \leq 1 + \lambda(8\delta_0 + 4M_0 + 4).$$
	Then for each $0 \leq k \leq n-1$,  $|w_k - w_{k+1}|\leq \ell_d(\widetilde\alpha[w_k, w_{k+1}]) < 2r - (4\delta_0 + 2M_0 + 2)$.
	
	Based on \eqref{eq:lower bound on alpha gamma3}, we may choose $v_k \in \gamma_3$ such that $|w_k - v_k| = \dist(w_k, \gamma_3) \geq r$. Then by Lemma~\ref{lem:proj2}, $|v_k - v_{k+1}| \leq 8\delta_0 + 4M_0 + 4$ for all $k$. Note that $|w_0 - v_0| = \dist(x', \gamma_3) \leq |x'-x_1| = R$ and $|w_n - v_n| = \dist(y', \gamma_3) \leq |y'-y_1| = R$.
	
	If $|x'-y'| \leq L_0$, then
	$\ell_d(\widetilde\alpha[x', y']) \leq \lambda|x'-y'| + \xi$ by assumption and so
	\[
	n \leq \frac{\ell_d(\widetilde\alpha[x', y'])}{1 + \lambda(8\delta_0 + 4M_0 + 4)} + 1 \leq \frac{\lambda|x'-y'| + \xi}{1 + \lambda(8\delta_0 + 4M_0 + 4)} + 1.
	\]
	It follows from the above estimate that 
	\beqq
	\begin{aligned}
		|x'-y'| &= |w_0 - w_n| \leq |w_0 - v_0| + \sum_{k=0}^{n-1}|v_k - v_{k+1}| + |v_n - w_n| \leq 2R + n(8\delta_0 + 4M_0 + 4) \\
		&\leq 2R + (8\delta_0 + 4M_0 + 4) + \frac{(8\delta_0 + 4M_0 + 4)\xi}{1+\lambda(8\delta_0 + 4M_0 + 4)} + \frac{\lambda(8\delta_0 + 4M_0 + 4)}{1+\lambda(8\delta_0 + 4M_0 + 4)}|x'-y'|,
	\end{aligned}
	\eeqq
 and thus 
 $$|x'-y'|\leq  \big(1 + \lambda(8\delta_0 + 4M_0 + 4)\big)\big(2R + (8\delta_0 + 4M_0 + 4)\big) + (8\delta_0 + 4M_0 + 4)\xi=N.$$ 
This completes the proof of claim.

	If $|\widetilde\alpha(s) - \widetilde\alpha(t)| > L_0$, then $2k = L_0 < |\widetilde\alpha(s) - \widetilde\alpha(t)| \leq  |s-t|$.
	
	If $|\widetilde\alpha(s) - \widetilde\alpha(t)| \leq N$, then by the assumption of Proposition~\ref{prop:curve-approx}, 
	$$|s-t| \leq \lambda|\widetilde\alpha(s) - \widetilde\alpha(t)| + \xi \leq \lambda N + \xi = \frac{1}{2}(L_0-1) < k.$$
	
	Consequently, $|s-t|<k$ or $|s-t|>2k$. Then by \cite[Lemma 5.15]{BHK}, it holds 
	$$\max\limits_{x \in [0,\ell_d(\widetilde\alpha)]} f(x) < \frac{3}{2}k = \frac{3}{4}L_0 < L_0.$$ 
	This implies $\widetilde\alpha \subset B_d(\gamma, L_0)$ as desired.
\end{proof}

Finally, we need a simple technical lemma, which provides a sufficient condition ensuring that a $(c, \mu)$-quasigeodesic $\gamma$ is close to $\widetilde{\alpha}$. 
\begin{lem}\label{lem:reverse}
Fix $c\geq 1$, $\mu, r > 0$ and $S\geq 0$. Suppose $\gamma \in \Lambda_{x,y}^{c,\mu}(X,d)$ and $Q \subset X$ is a set with $\{x,y\} \subset Q \subset \overline{B_d}(\gamma,r)$ such that  $\dist(Q_1, Q_2) \leq S$ whenever $Q = Q_1 \cup Q_2$, $x \in Q_1$, $y \in Q_2$. Then $\gamma \subset B_d(Q, c(2r+S)+\mu)$.
\end{lem}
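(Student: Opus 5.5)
The plan is a connectedness argument along $\gamma$, following the analogous lemma in \cite[Section 5]{BHK} but using only the rough quasigeodesic inequality \eqref{eq:prop of subcurve} in place of geodesicity. Fix $w\in\gamma$ and write $\gamma=\gamma[x,w]\cup\gamma[w,y]$. Every point $q\in Q$ satisfies $\dist(q,\gamma)\le r$, so for each $\eta>0$ there is $\pi(q)\in\gamma$ with $|q-\pi(q)|<r+\eta$. Set
\[
Q_1=\{q\in Q:\ \text{some such } \pi(q)\in\gamma[x,w]\},\qquad Q_2=Q\setminus Q_1 .
\]
For $x$ we may take $\pi(x)=x$, so $x\in Q_1$. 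If $y\in Q_1$, there is a point of $\gamma[x,w]$ within $r+\eta$ of $y$, and I treat this degenerate case separately below. Otherwise $y\in Q_2$. Every $q\in Q_2$ has all of its near-projections in $\gamma[w,y]$, and in particular at least one.

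By hypothesis, $\dist(Q_1,Q_2)\le S$. So there are $q_1\in Q_1$ and $q_2\in Q_2$ with $|q_1-q_2|<S+\eta$, together with $u=\pi(q_1)\in\gamma[x,w]$ and $v=\pi(q_2)\in\gamma[w,y]$. Then
\[
|u-v|<2r+S+3\eta ,
\]
and $w$ lies on $\gamma[u,v]$. By \eqref{eq:prop of subcurve},
\[
\ell_d(\gamma[u,w])+\ell_d(\gamma[w,v])=\ell_d(\gamma[u,v])\le c|u-v|+\mu .
\]
Hence one of the two pieces has length at most $\tfrac12\bigl(c(2r+S+3\eta)+\mu\bigr)$. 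Say it is $\gamma[u,w]$. Then
\[
|w-q_1|\le |w-u|+|u-q_1|<\tfrac12\bigl(c(2r+S)+\mu\bigr)+r+O(\eta).
\]
This is at most $c(2r+S)+\mu$ with room to spare, since $r\le cr$ and $\eta$ is arbitrary. The strict inequality needed for the open neighbourhood is obtained from this slack by choosing $\eta$ small.

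The degenerate case is when $y$ itself has a near-projection $v\in\gamma[x,w]$. Then $w\in\gamma[v,y]$, and \eqref{eq:prop of subcurve} gives
\[
|w-y|\le\ell_d(\gamma[v,y])\le c|v-y|+\mu< c(r+\eta)+\mu,
\]
which is again less than $c(2r+S)+\mu$ for small $\eta$. Since $y\in Q$, this finishes that case. The same remark covers $w=x$ and $w=y$ trivially.

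The main obstacle is bookkeeping rather than geometry: $Q$ need not be closed and $\gamma$ is only rough, so exact nearest points may not exist. This is why I work with approximate projections and the strict slack $r<2cr$. I also have to make sure the partition is genuine, with $x\in Q_1$ and, after the degenerate case, $y\in Q_2$, so that the hypothesis on $\dist(Q_1,Q_2)$ applies.
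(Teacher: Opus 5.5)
Your proof is correct, but it takes a different route from the paper's.

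\textbf{The paper's route.} The paper argues by contradiction. It assumes some $z\in\gamma$ has $\dist(z,Q)\ge c(2r+S+4\varsigma)+\mu$. It then cuts out a buffer subarc $\gamma'=\gamma[x_1,x_2]$ around $z$ of $d$-length $c(2r+S+4\varsigma)+2\mu$, and rules out that $Q$ meets the $(r+\varsigma/2)$-neighbourhood of $\gamma'$. Next it covers $Q$ by its traces on the neighbourhoods of the two end pieces $\gamma[x,x_1]$ and $\gamma[x_2,y]$. The hypothesis then produces $y_1,y_2$ on opposite sides of $\gamma'$ with $|y_1-y_2|<2r+S+\tfrac32\varsigma$, which contradicts $\ell_d(\gamma[y_1,y_2])\le c|y_1-y_2|+\mu$.

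\textbf{Your route.} You split $\gamma$ at the point $w$ itself and bound $\dist(w,Q)$ directly through the shorter half of $\gamma[u,v]$. This removes the buffer construction and the case analysis on whether $Q$ meets the neighbourhood of $\gamma'$. It also gives the slightly sharper bound $\tfrac12\bigl(c(2r+S)+\mu\bigr)+r$.

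\textbf{Small corrections.}
\begin{enumerate}
\item Your degenerate case $y\in Q_1$ is only an artifact of forcing $Q_2=Q\setminus Q_1$. The hypothesis does not require $Q_1\cap Q_2=\emptyset$. So you can let $Q_2$ be the points with a near-projection in $\gamma[w,y]$, as the paper effectively does with $Q\cap U_i$. Then $y\in Q_2$ automatically and the main argument runs unchanged, even when $q_1=q_2$.
\item The strict room you need comes from $\mu>0$, not from $r<2cr$. The gap is $c(2r+S)+\mu-\tfrac12\bigl(c(2r+S)+\mu\bigr)-r=(c-1)r+\tfrac{cS}{2}+\tfrac{\mu}{2}\ge\tfrac{\mu}{2}$. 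With $\mu=0$, $c=1$, $S=0$ your bound would be exactly $2r$.
\item Since $\gamma$ is compact, exact nearest points on $\gamma$ exist. Only the approximate realization of $\dist(Q_1,Q_2)$ actually requires the $\eta$.
\end{enumerate}
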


\begin{proof}
If not, then there exist $\varsigma>0$ and $z \in \gamma$ such that $\dist(z,Q) \geq c(2r+S+4\varsigma)+\mu$. Then 
$$\ell_d(\gamma[x,z]) \geq |x-z| \geq \dist(z,Q) \geq c(2r+S+4\varsigma)+\mu.$$ 
Thus there exists $x_1 \in \gamma[x,z]$ such that $\ell_d(\gamma[x_1,z]) = c(r + \frac{S}{2} + 2\varsigma)+\mu$. Similarly, there exists $x_2 \in \gamma[z,y]$ such that $\ell_d(\gamma[z,x_2]) = c(r + \frac{S}{2} + 2\varsigma)+\mu$.

Set $\gamma' = \gamma[x_1,x_2]$, $\gamma_1 = \gamma[x,x_1]$, $\gamma_2 = \gamma[x_2,y]$. Let $U' = B_d(\gamma', r+\frac{\varsigma}{2})$ and $U_i = B_d(\gamma_i, r+\frac{\varsigma}{2})$ for $i=1,2$.

If $U' \cap Q \neq \emptyset$, then there exists $z' \in \gamma'$ such that $\dist(z',Q) < r+\frac{\varsigma}{2}$. Thus
\[
c(2r+S+4\varsigma)+\mu \leq d(z,Q) < |z-z'| + r+\frac{\varsigma}{2} \leq c(r+\frac{S}{2}+2\varsigma)+\mu+r+\frac{\varsigma}{2},
\]
which is a contradiction.

If $U' \cap Q = \emptyset$, then since  $Q \subset \overline{B_d}(\gamma,r)$, we have $Q = Q_1 \cup Q_2$, where  $Q_i = Q \cap U_i$. Thus there exists $q_i \in Q_i$ such that $|q_1 - q_2| \leq S + \frac{\varsigma}{2}$, and $y_i \in \gamma_i$ with $|y_i - q_i| < r+\frac{\varsigma}{2}$. Hence 
$$|y_1 - y_2|\leq |y_1-q_1|+|q_1-q_2|+|q_2-y_2| < 2r + S + \frac{3}{2}\varsigma.$$ 
It follows from this and \eqref{eq:prop of subcurve} that
\[
c(2r+S+4\varsigma)+2\mu = \ell_d(\gamma') \leq \ell_d(\gamma[y_1,y_2]) \leq c|y_1-y_2|+\mu< c(2r+S+\frac{3}{2}\varsigma)+\mu,
\]
which is a contradiction. 

This contradiction shows that  $\gamma \subset B_d(Q, c(2r+S)+\mu)$.
\end{proof}

With all these results at hand, we are able to prove Theorem~\textup{\ref{thm:G-H}}.
\begin{proof}[Proof of Theorem~\textup{\ref{thm:G-H}}]
For $\lambda = 3(c+\mu)C^2$ and $\xi = 1$, let $N=N(\lambda,\delta,c,\mu,\xi)=N(c,C,\delta,\mu)$ and $L_0=L_0(c,C,\delta,\mu)=2\lambda N + 2\xi + 1$ be given by Proposition \ref{prop:curve-approx}. Then we set 
\[
\varepsilon_0 =\varepsilon_0(\delta,C,c,\mu)= \frac{1}{13L_0(c+\mu)C^2}. 
\]	
		
Fix $\gamma \in \Lambda_{x,y}^{c,\mu}(X,d)$ and a rectifiable curve $\alpha:x\curvearrowright y$ with $\ell_\rho(\alpha)<\infty$. In fact, if $\ell_\rho(\alpha)=\infty$, there is nothing to prove. By Proposition~\ref{prop:local}, there exists a rectifiable curve $\widetilde{\alpha}:x\curvearrowright y$ such that
$
\ell_\rho(\widetilde{\alpha})\le \ell_\rho(\alpha),
$
and $\widetilde{\alpha}$ satisfies 
\[
\ell_d(\widetilde{\alpha}[u,v]) \leq 3(c+\mu)C^2|u-v| + 1=\lambda|u-v| + 1
\]
whenever $u,v \in \widetilde{\alpha}$, $u \in \widetilde{\alpha}[x,v]$, and $|u-v| \leq \frac{1}{12\varepsilon(c+\mu) C^2}.$ Since $0<\varepsilon\le\varepsilon_0$, we have
\[
 \frac{1}{12\varepsilon(c+\mu)C^2}
 \ge
 \frac{1}{12\varepsilon_0(c+\mu)C^2}
 =
 \frac{13}{12}L_0
 >
 L_0.
\]
Therefore, $\ell_d(\widetilde\alpha[u,v]) \leq \lambda|u-v| + 1$ holds whenever $|u-v| \leq L_0$. 

Proposition~\ref{prop:curve-approx} then implies $\widetilde\alpha \subset B_d(\gamma, L_0)$. Applying Lemma~\ref{lem:reverse} with $Q=\widetilde{\alpha}$ gives $\gamma \subset B_d(\widetilde\alpha, 2cL_0+\mu)$.
Then Theorem~\ref{thm:weighted-close} yields that there exists $K=K(\delta,C,c,\mu)\geq1$ such that
\[
\ell_\rho(\gamma)
   \le K\ell_\rho(\widetilde{\alpha})
   \le K\ell_\rho(\alpha).
\]
This completes the proof of Theorem~\textup{\ref{thm:G-H}}.
\end{proof}

\section{Uniformity of quasigeodesics in conformally deformed spaces}\label{sec:uniformity of quasigeodesic}
In this section, we will prove that $(c,\mu)$-quasigeodesics in complete Gromov hyperbolic spaces are uniform curves in the conformally deformed spaces $X_{\varepsilon}= (X,d_\varepsilon)$ when $\varepsilon$ is sufficiently small, quantitatively.

\begin{proof}[Proof of Theorem~\textup{\ref{thm:uniform}}]
	First of all, by \cite[Proof of Theorem 1.2]{AlluJose}, each $X_\varepsilon = (X,d_\varepsilon)$ is a noncomplete bounded rectifiably connected metric space. Thus, it suffices to prove that there exist constants $\varepsilon_0=\varepsilon_0(\delta,c,\mu)$ and $A=A(\delta,c,\mu)$
  such that every $(c,\mu)$-quasigeodesic in $(X,d)$ is an $A$-uniform curve in $X_\varepsilon$ for $0 < \varepsilon \leq \varepsilon_0$. 
	
	Notice that by Theorem~\ref{thm:G-H} with $\rho =\rho_\varepsilon$ and $C=1$, there exist constants $\varepsilon_0 = \varepsilon_0(\delta, c, \mu) > 0$ and $A = A(\delta, c,\mu) \geq 1$ so that if $0 < \varepsilon \leq \varepsilon_0$, then for each $\gamma \in \Lambda_{x,y}^{c,\mu}(X,d)$, it holds 
	$$
		\ell_\varepsilon(\gamma) \leq A d_\varepsilon(x,y).
	$$
 It follows that each  $\gamma \in \Lambda_{x,y}^{c,\mu}(X,d)$ is $A$-quasiconvex in $X_\varepsilon$.

	Next, we show that every $(c,\mu)$-quasigeodesic  $\gamma \in \Lambda_{x,y}^{c,\mu}(X,d)$ satisfies the $A$-double cone condition.
	To this end, let $\alpha: x \curvearrowright y$ be a $2$-short arc. 
	For $z \in \gamma$, by Lemma~\ref{quasigeodesic ieq}, we can choose $a=x$ or $y$ so that when $u \in \gamma[a,z]$, 
	\begin{equation}\label{4.1}
		|p-u| \geq |p-z| + |u-z| - \kappa.
	\end{equation}
where $\kappa=\kappa(\delta,c,\mu) = 8\delta + 6M_0 + 2\delta_0 + 20$.
	
	Now by \eqref{4.1}, we may estimate the conformal length as follows:
	\beq\label{4.2}
	\begin{aligned}
		\ell_\varepsilon(\gamma[a,z]) &= \int_{\gamma[a,z]} \exp(-\varepsilon|p-u|) \, |du| \\
		&\leq \exp(\varepsilon \kappa) \exp(-\varepsilon|p-z|) \int_{\gamma[a,z]} \exp(-\varepsilon|u-z|) \, |du|\\
		&\leq \exp(\varepsilon \kappa) \rho_\varepsilon(z) \int_0^\infty \exp\!\Big(-\frac{\varepsilon (t-\mu)}{c}\Big) \, dt \\
		&= \frac{c \, \exp\left(\varepsilon \left(\kappa+\mu/c\right)\right)}{\varepsilon} \rho_\varepsilon(z)\leq \frac{c \, \exp\left(\varepsilon_0 \left(\kappa+\mu/c\right)\right)}{\varepsilon} \rho_\varepsilon(z)
	\end{aligned}
	\eeq
	
	Next, we estimate $d_\varepsilon(z)$ for $z\in X$ and $0 < \varepsilon \leq \varepsilon_0$. 
	
	
	For any $\zeta\in \partial X_\varepsilon$, Proposition~\ref{prop:boundary-id} implies that there exists $\{x_j\}$ and $j_0$ such that $|z-x_j| > 1/\varepsilon$ if $j\geq j_0$ and $x_j\stackrel{d_\varepsilon}{\longrightarrow}\zeta,$ when $j\rightarrow \infty$. 
	Let $\beta_j$ be a rectifiable curve joining $z$ and $x_j$ starting from $z$ and select a subcurve $\beta_j'$ of $\beta_j$ with $\ell_d(\beta_j') = 1/\varepsilon$. Then by~\eqref{eq:conformal-metric} and \eqref{eq:inequality}, we have
	\beqq
	\begin{aligned}
		d_\varepsilon(z,x_j) &= \inf_{\beta_j} \int_{\beta_j} \rho_\varepsilon \, ds \geq \inf_{\beta_j} \int_{\beta_j'} \rho_\varepsilon \, ds \\
		&\geq \int_0^{1/\varepsilon} \rho_\varepsilon(z) e^{-\varepsilon t} \, dt \geq \bigl(1 - e^{-1}\bigr) \frac{\rho_\varepsilon(z)}{\varepsilon} \\
		&\geq \frac{\rho_\varepsilon(z)}{e\varepsilon}.
	\end{aligned}
	\eeqq
	In particular, this implies that for all $z \in X$, it holds
	\beq\label{4.3}
	d_\varepsilon(z) = d_\varepsilon(z,  \partial X_\varepsilon) \geq \frac{1}{e \varepsilon } \rho_\varepsilon(z).
	\eeq
	
	Combining \eqref{4.2} and \eqref{4.3} gives 
	\[
	\ell_\varepsilon(\gamma[a,z]) 
	\leq \exp\left(\varepsilon_0 \left(\kappa+\mu/c \right) + \log c + 1\right) \, d_\varepsilon(z).
	\]
	Without loss of generality, we may assume $A\geq \exp\left(\varepsilon_0 \left(\kappa+\mu/c \right) + \log c + 1\right)$.
	Consequently, for all $z\in \gamma,$ it holds 
$$\ell_\varepsilon(\gamma[x,z])\wedge \ell_\varepsilon(\gamma[y,z])\leq A d_\varepsilon(z).$$
This establishes the desired $A$-double cone condition. Therefore every $(c,\mu)$-quasigeodesic in $(X,d)$ is an $A$-uniform curve in $X_\varepsilon$ for $0 < \varepsilon \leq \varepsilon_0$.

In the above proof, taking $c=\mu=1$ in particular yields the second desired conclusion and thus completes the proof of Theorem~\ref{thm:uniform}.

\end{proof}

\section{Uniform spaces are Gromov hyperbolic in the quasihyperbolic metric}\label{sec:uniform spaces are Gromov hyperbolic}

In this section, we prove Theorem~\ref{thm:Gromov and roughly starlike}. 

\subsection{Quasigeodesics in inner uniform domains}
\begin{thm}
	\label{thm:intrinsic-210}
	Let $(\Omega,d)$ be an $A$-(inner) uniform space and $k$ be its
	quasihyperbolic metric. For every $c\ge1$ and $\mu>0$, there exists
	$B=B(A,c,\mu)\ge1$ such that for all points $y_1,y_2\in \Omega$, each curve
	$\gamma\in\Lambda^{c,\mu}_{y_1,y_2}(\Omega,k)$
	is a $B$-(inner) uniform curve in $(\Omega,d)$. 
\end{thm}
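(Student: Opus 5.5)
This is the analogue of the classical fact (BHK Theorem 2.10) that quasihyperbolic geodesics in uniform domains are uniform curves. Here quasihyperbolic geodesics are replaced by $\gamma\in\Lambda^{c,\mu}_{y_1,y_2}(\Omega,k)$. I will adapt the standard argument: compare $\gamma$ with a uniform curve, using only the bound $\ell_k(\gamma[z,w])\le c\,k(z,w)+\mu$ for subarcs together with the estimates \eqref{eq:quasihyperbolic inequality}, \eqref{eq:quasihyperbolic length inequality} and Lemma~\ref{lem:intrinsic-213}. Throughout, "distance" means $d$ in the uniform case and $\sigma_\Omega$ in the inner uniform case, and the two cases run in parallel via \eqref{eq:j-k-upper} and \eqref{eq:inner upper}.

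\textbf{Step 1 (a local cigar estimate).} For $z,w\in\gamma$ set $t=d_\Omega(z)\wedge d_\Omega(w)$. By \eqref{eq:quasihyperbolic length inequality}, the quasigeodesic property and Lemma~\ref{lem:intrinsic-213},
\[
\log\Bigl(1+\frac{\ell_d(\gamma[z,w])}{t}\Bigr)\le \ell_k(\gamma[z,w])\le c\,k(z,w)+\mu\le 4cA^2\log\Bigl(1+\frac{|z-w|}{t}\Bigr)+\mu .
\]
Hence
\[
1+\frac{\ell_d(\gamma[z,w])}{t}\le e^{\mu}\Bigl(1+\frac{|z-w|}{t}\Bigr)^{4cA^2}.
\]
This inequality is the main tool. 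Whenever $|z-w|\lesssim t$, it gives $\ell_d(\gamma[z,w])\le C_1 t$. Whenever $|z-w|\ge t$, it only gives a polynomial bound, which is not yet linear.

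\textbf{Step 2 (double cone condition).} I will take $z\in\gamma$ and use the standard dyadic decomposition. Let $z_0$ be a point of $\gamma$ maximizing $d_\Omega$, or nearly maximizing it, since a maximum need not exist without compactness; a point with $d_\Omega(z_0)\ge \tfrac12\sup_\gamma d_\Omega$ suffices. I treat the subarc $\gamma[y_1,z_0]$; the other side is symmetric. Split $\gamma[y_1,z_0]$ into pieces on which $d_\Omega\in[2^{j},2^{j+1})$. The key claim is that $\gamma$ cannot spend much length at a given scale. If $\ell_d$ of the portion of $\gamma[y_1,z]$ with $d_\Omega\le 2^{j+1}$ were large compared with $2^j$, then since $d_\Omega(u)\le 2^{j+1}$ along that portion, its quasihyperbolic length would exceed its $d$-length divided by $2^{j+1}$. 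On the other hand, its endpoints can be joined by a uniform curve of quasihyperbolic length $O(A^2\log(\cdot))$ by \eqref{eq:j-k-upper}. Comparing the two via $\ell_k\le ck+\mu$ bounds the length at scale $2^j$ by $C(A,c,\mu)2^j$. To make this rigorous I will follow BHK's route. First show the \emph{ball-separation/Harnack} fact: along $\gamma$, $d_\Omega$ grows at least geometrically away from each endpoint, up to a multiplicative constant. This comes from Step 1 combined with \eqref{eq:quasihyperbolic inequality}: if $u$ lies after $z$ on $\gamma[y_1,z_0]$, then $d_\Omega(u)\ge d_\Omega(z)/C_2$. Then sum the geometric series to get $\ell_d(\gamma[y_1,z])\le B\,d_\Omega(z)$.

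\textbf{Step 3 (quasiconvexity).} Let $t_0=\sup_\gamma d_\Omega$. By Step 2, $\ell_d(\gamma)\le 2B\,t_0$ up to constants. If $t_0\lesssim |y_1-y_2|$, this gives $\ell_d(\gamma)\le B'|y_1-y_2|$ (respectively $\sigma_\Omega$). Otherwise $t_0\gg|y_1-y_2|$. Then \eqref{eq:quasihyperbolic inequality} forces $k(y_1,z_0)$ to be large, while $k(y_1,y_2)$ is bounded via \eqref{eq:j-k-upper}. Note that $\ell_k(\gamma)\le ck(y_1,y_2)+\mu\min\{k(y_1,y_2),1\}$, so this contradicts $\ell_k(\gamma)\ge k(y_1,z_0)$ when the scale ratio exceeds a constant. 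The outcome is $t_0\le C(d_\Omega(y_1)\vee d_\Omega(y_2)+|y_1-y_2|)$. The case $|y_1-y_2|$ small relative to $d_\Omega(y_i)$ is handled directly by Step 1.

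\textbf{Main obstacle.} I expect the hardest step to be the monotonicity/growth estimate in Step 2. The additive error $\mu$ prevents using exact geodesic minimality, and the absence of compactness means the near-maximizers of $d_\Omega$ must be chosen with slack. My first attempt is to apply Step 1 to pairs of points at comparable scales. The additive $\mu$ then costs only a factor $e^\mu$, which is absorbed in $B$.
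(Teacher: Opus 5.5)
Your overall architecture is the same as the paper's: a scale-by-scale comparison of $\ell_k$ against the uniform upper bound \eqref{eq:j-k-upper}, followed by a separate quasiconvexity argument. However, the double cone step is left open at its crucial point, and the quasiconvexity step fails in two regimes.

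\textbf{Step 2.} The Harnack/monotonicity property you want to establish first does not follow from Step 1 and \eqref{eq:quasihyperbolic inequality}. Suppose $d_\Omega(u)=d_\Omega(z)/C_2$ for some $u$ between $z$ and $z_0$. The dip gives $\ell_k(\gamma[z,z_0])\ge 2\log C_2-\log 2$. But the only upper bound available is $c\,k(z,z_0)+\mu\le 4cA^2\log(1+2|z-z_0|/d_\Omega(z))+\mu$, which is useless unless $|z-z_0|/d_\Omega(z)$ is already controlled. Your ``key claim'' has the same defect whenever the portion of $\gamma[y_1,z]$ with $d_\Omega\le 2^{j+1}$ starts at $y_1$ with $d_\Omega(y_1)\ll 2^j$.

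The missing idea is the paper's hitting-time decomposition. Let $D=\max_\gamma d_\Omega$; it exists because $\gamma$ is compact, so no slack is needed. Cut $\gamma$ at the first and at the last parameter where $d_\Omega=D2^{-k}$. Each piece $\gamma_\nu$ then has $d_\Omega\le D2^{1-|\nu|}$ throughout and $d_\Omega\ge D2^{-|\nu|}$ at both endpoints $a_\nu,b_\nu$. Hence $\frac{2^{|\nu|-1}}{D}\ell_d(\gamma_\nu)\le\ell_k(\gamma_\nu)\le c\,k(a_\nu,b_\nu)+\mu\le 4cA^2\log\bigl(1+2^{|\nu|}\ell_d(\gamma_\nu)/D\bigr)+\mu$. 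This gives $\ell_d(\gamma_\nu)\le TD2^{-|\nu|}$ and $\ell_k(\gamma_\nu)\le K_1$. The Harnack bound $d_\Omega\ge e^{-K_1}D2^{-|\nu|}$ on $\gamma_\nu$ is then a \emph{consequence}, and summing the geometric series gives the double cone condition. No a priori monotonicity is required.

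\textbf{Step 3, small separation.} The regime $|y_1-y_2|\ll t:=d_\Omega(y_1)\wedge d_\Omega(y_2)$ is not handled by Step 1. With the additive $\mu$, Step 1 only yields $\ell_d(\gamma)\le(e^\mu-1)t+O(|y_1-y_2|)$, which is not $\lesssim|y_1-y_2|$. You must use the multiplicative bound $\ell_k(\gamma)\le c\,k(y_1,y_2)+\mu\min\{k(y_1,y_2),1\}\le(c+\mu)k(y_1,y_2)$. It holds for the whole curve but not for its subarcs, and gives $\log(1+\ell_d(\gamma)/t)\le 4(c+\mu)A^2\log(1+|y_1-y_2|/t)$. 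This is precisely where the normalization $\mu_0=\mu\min\{\cdot,1\}$ in $\Lambda^{c,\mu}$ is indispensable.

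\textbf{Step 3, intermediate regime.} When $t\ll|y_1-y_2|\ll t_0$, $k(y_1,y_2)$ is not bounded. Comparing $\log(t_0/d_\Omega(y_1))+\log(t_0/d_\Omega(y_2))$ with $4(c+\mu)A^2\log(1+|y_1-y_2|/t)$ only gives $t_0\lesssim|y_1-y_2|\,(|y_1-y_2|/t)^{(4(c+\mu)A^2-1)/2}$, because the constant in front of the upper logarithm exceeds $1$. So your asserted outcome $t_0\le C(d_\Omega(y_1)\vee d_\Omega(y_2)+|y_1-y_2|)$ is not proved.

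The paper never compares with $t_0$; it localizes at scale $r=d(y_1,y_2)$.
\begin{itemize}
\item Choose $y_1',y_2'\in\gamma$ at $d$-length $r/2$ from the two ends. The double cone gives $d_\Omega(y_1')\wedge d_\Omega(y_2')\ge r/(2B_1)$, while $d(y_1',y_2')\le 2r$.
\item Then $\ell_k(\gamma[y_1',y_2'])\le 4cA^2\log(1+4B_1)+\mu$, hence $\ell_d(\gamma[y_1',y_2'])\lesssim d_\Omega(y_1')\wedge d_\Omega(y_2')$.
\item If this minimum is at most $R_1r$, you are done. Otherwise $d_\Omega(y_1)\wedge d_\Omega(y_2)>(R_1-\tfrac12)r$, and the multiplicative bound above finishes.
\end{itemize}
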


\begin{proof}
	We only consider the case for uniform spaces, as the proof of the other case  is similar.
	Fix $\gamma\in\Lambda^{c,\mu}_{y_1,y_2}(\Omega,k)$. We divide the proof into three main steps.
	\smallskip
	
	\textbf{Step 1.} Construct a dyadic decomposition of $\gamma$.
	\smallskip

	After reparameterizing $\gamma$ by $d$-arc-length from $y_1$, write
	$$\gamma:[0,L_1]\longrightarrow\Omega,
	\quad L_1=\ell_d(\gamma).$$
	Set 
	$D=\max\limits_{0\le t\le L_1}d_\Omega(\gamma(t)).$
	For $i=1,2$, let
	$N_i$ be the unique nonnegative integer such that
	\begin{equation*}
		\frac{D}{2^{N_i+1}}
		<
		d_\Omega(y_i)
		\le
		\frac{D}{2^{N_i}}.
	\end{equation*}
	For $k=0,\cdots,N_1$, let
	\[
	t_k^1
	=
	\min\left\{
	t\in[0,L_1]:d_\Omega(\gamma(t))=\frac{D}{2^k}
	\right\},
	\quad
	x_k^1=\gamma(t_k^1),
	\]
	and for $l=0,\cdots,N_2$, let
	\[
	t_l^2
	=
	\max\left\{
	t\in[0,L_1]:d_\Omega(\gamma(t))=\frac{D}{2^l}
	\right\},
	\quad
	x_l^2=\gamma(t_l^2).
	\]
	These parameters exist by continuity and satisfy
	\[
	0\le t_{N_1}^1<\cdots< t_1^1< t_0^1
	\le t_0^2< t_1^2<\cdots< t_{N_2}^2\le L_1.
	\]
	We now define the nonoverlapping subcurves $\gamma_\nu$, modulo common
	endpoints, by \,$\gamma_{-(N_1+1)}=\gamma|_{[0,t_{N_1}^1]}, \,              \gamma_{-k}=\gamma|_{[t_k^1,t_{k-1}^1]}$ for $k=1,\cdots,N_1$,\,     $\gamma_0=\gamma|_{[t_0^1,t_0^2]},\,                                    \gamma_l=\gamma|_{[t_{l-1}^2,t_l^2]}$ for $l=1,\cdots,N_2$, and           $\gamma_{N_2+1}=\gamma|_{[t_{N_2}^2,L_1]}.$  
	If one of the defining parameters is an endpoint, the corresponding terminal
	subcurve may be degenerate. The family $\{\gamma_\nu:-N_1-1\le\nu\le N_2+1\}$
	contains exactly $N_1+N_2+3$ pieces and covers $\gamma$.
	
	Let $a_\nu,b_\nu$ be the endpoints of $\gamma_\nu$. We have
	\begin{equation}
		\label{eq:dyadic-pointwise}
		d_\Omega(z)
		\le
		\frac{D}{2^{|\nu|-1}},
		\quad z\in\gamma_\nu,
	\end{equation}
	and
	\begin{equation}\label{eq:dyadic-endpoints}
		d_\Omega(a_\nu)
		\wedge
		d_\Omega(b_\nu)
		\ge
		\frac{D}{2^{|\nu|}}.
	\end{equation}
	For $\nu=0$, the first bound simply reads $d_\Omega(z)\le2D$;
	in fact the sharper bound $d_\Omega(z)\le D$ holds.
	\smallskip
	
	\textbf{Step 2.} Verify the double cone condition.
	\smallskip
	
	On each dyadic piece, by \eqref{eq:dyadic-pointwise}, we have 
	\begin{equation*}
		\ell_k(\gamma_\nu)
		=
		\int_{\gamma_\nu}\frac{ds}{d_\Omega(z)}
		\ge
		\frac{2^{|\nu|-1}}{D}\ell_d(\gamma_\nu).
	\end{equation*}
	On the other hand, Lemma~\ref{lem:intrinsic-213}, \eqref{eq:prop of subcurve} and \eqref{eq:dyadic-endpoints} yield
	\begin{equation}
		\label{eq:block-upper-k}
		\begin{aligned}
			\ell_k(\gamma_\nu)\leq c\,k(a_\nu,b_\nu)+\mu
			&\le
			4cA^2
			\log\left(
			1+
			\frac{2^{|\nu|}\ell_d(\gamma_\nu)}{D}
			\right)
			+\mu.
		\end{aligned}
	\end{equation}
	Combining the above two estimates gives
	\begin{equation*}
		\frac{2^{|\nu|-1}\ell_d(\gamma_\nu)}{D}
		\le
		4cA^2\log\left(1+\frac{2^{|\nu|}\ell_d(\gamma_\nu)}{D}\right)+\mu.
	\end{equation*}
 Since $\log(1+s)\le\sqrt{s}$ for $s\geq 0$, we obtain from the above that 
	\begin{equation*}
		\frac{2^{|\nu|}\ell_d(\gamma_\nu)}{D}
		\le
		T
		:=
		\left(
		4cA^2+
		\sqrt{16c^2A^4+2\mu}
		\right)^2.
	\end{equation*}
	Thus
	\begin{equation}
		\label{eq:block-length}
		\ell_d(\gamma_\nu)
		\le
		TD\,2^{-|\nu|},
	\end{equation}
	and so 
	\begin{equation*}
		\ell_k(\gamma_\nu)
		\stackrel{\eqref{eq:block-upper-k}}{\le} K_1:=4cA^2\log(1+T)+\mu.
	\end{equation*}
	For every $z\in\gamma_\nu$, it holds 
	\[
	\left|
	\log\frac{d_\Omega(z)}{d_\Omega(a_\nu)}
	\right|
	\stackrel{\eqref{eq:quasihyperbolic inequality}}{\le}
	k(z,a_\nu)
	\le
	\ell_k(\gamma_\nu)
	\le K_1.
	\]
	Combining this with \eqref{eq:dyadic-endpoints}, we have
	\begin{equation}
		\label{eq:block-boundary-lower}
		d_\Omega(z)
		\ge
		e^{-K_1}D\,2^{-|\nu|},
		\quad z\in\gamma_\nu.
	\end{equation}
	
	If $z\in\gamma_\nu$ and $\nu\le0$, then the subcurve from $y_1$ to $z$ is
	contained in the union of the blocks on the $y_1$-side having dyadic
	indices at least $|\nu|$. Therefore, by \eqref{eq:block-length},
	\[
	\ell_d(\gamma[y_1,z])
	\le
	TD\sum_{j\ge|\nu|}2^{-j}
	\le
	2TD\,2^{-|\nu|}.
	\]
	If $\nu\ge0$, then an analogous estimate holds for $\ell_d(\gamma[z,y_2])$.
	Combining these estimates with \eqref{eq:block-boundary-lower}, we obtain
	\begin{equation}
		\label{eq:double-cone-explicit}
		\ell_d(\gamma[y_1,z])
		\wedge
		\ell_d(\gamma[z,y_2])
		\le
		B_1d_\Omega(z),
		\quad
		B_1=2Te^{K_1}.
	\end{equation}
		\smallskip
	
	\textbf{Step 3.} Verify the quasiconvexity condition.
	\smallskip
	
	
	Choose $y_1',y_2'\in\gamma$ so that
	\[
	\ell_d(\gamma[y_1,y_1'])
	=
	\ell_d(\gamma[y_2',y_2])
	=
	\frac{1}{2}d(y_1,y_2).
	\]
	Then 
	\[
	d(y_1',y_2')
	\le
	d(y_1',y_1)+d(y_1,y_2)+d(y_2,y_2')
	\le
	2d(y_1,y_2),
	\]
	and by \eqref{eq:double-cone-explicit},
	\begin{equation*}
		d_\Omega(y_1')
		\wedge
		d_\Omega(y_2')
		\ge
		\frac{1}{2B_1}d(y_1,y_2).
	\end{equation*}
	These imply
	\[
	\log\left(1+\frac{\ell_d(\gamma[y_1',y_2'])}{ d_\Omega(y_1')    \wedge   d_\Omega(y_2')}\right)
	\stackrel{\eqref{eq:quasihyperbolic length inequality}}{\le}
	\ell_k(\gamma[y_1',y_2'])\stackrel{\eqref{eq:prop of subcurve}}{\leq} c\,k(y_1',y_2')+\mu\stackrel{\eqref{eq:j-k-upper}}{\leq} 	4cA^2\log(1+4B_1)+\mu,
	\] 
	from which we conclude 
	\begin{equation*}
		\ell_d(\gamma[y_1',y_2'])
		\le
		\bigl(e^{4cA^2\log(1+4B_1)+\mu}-1\bigr)d_\Omega(y_1')    \wedge   d_\Omega(y_2').
	\end{equation*}
	
	Set
	\begin{equation}\label{eq:explicit-R}
		R_1=R_1(A,c,\mu)
		:=
		\frac12
		+
		\frac{1}{
			\exp\!\left(\dfrac{1}{4A^2(c+\mu)}\right)-1
		}.
	\end{equation}
	Since $A\ge1$, $c\ge1$, and $\mu>0$, we have $R_1>1$.
	
	If $d_\Omega(y_1')    \wedge   d_\Omega(y_2')\le R_1d(y_1, y_2)$, then
	\[
	\ell_d(\gamma)
	=
	d(y_1, y_2)+\ell_d(\gamma[y_1',y_2'])
	\le
	\left[1+R_1\bigl(e^{4cA^2\log(1+4B_1)+\mu}-1\bigr)\right]d(y_1, y_2).
	\]
	
If $d_\Omega(y_1')    \wedge   d_\Omega(y_2')>R_1d(y_1, y_2)$, then
	\[
	d_\Omega(y_1)
	\wedge
	d_\Omega(y_2)\geq d_\Omega(y'_1)
	\wedge
	d_\Omega(y'_2)-\frac{1}{2}d(y_1,y_2)
	>
	\left(R_1-\frac12\right)d(y_1, y_2),
	\]
	and so  
	\[
\ell_k(\gamma)\stackrel{\eqref{eq:prop of curve}}{\leq}(c+\mu)k(y_1,y_2)	\stackrel{\eqref{eq:j-k-upper}}{\le}4 (c+\mu)
	A^2\log\left(1+\frac{1}{R_1-1/2}\right)\stackrel{\eqref{eq:explicit-R}}{\leq} 1.
	\]
	Moreover, from \eqref{eq:quasihyperbolic length inequality} and \eqref{eq:j-k-upper}, we obtain
	\beqq
	\begin{aligned}
		\frac{\ell_d(\gamma)}{d_\Omega(y_1)\wedge d_\Omega(y_2)}
		&\le
		e^{\ell_k(\gamma)}-1\le2\ell_k(\gamma)\le2(c+\mu)k(y_1,y_2)\\
		&\le
		8A^2(c+\mu)\frac{d(y_1, y_2)}{d_\Omega(y_1)\wedge d_\Omega(y_2)}.
	\end{aligned}
	\eeqq
	Thus
	\[
	\ell_d(\gamma)\le8A^2(c+\mu)d(y_1, y_2).
	\]
	In both cases, $\gamma$ is $	\max\left\{
	1+R_1\bigl(e^{4cA^2\log(1+4B_1)+\mu}-1\bigr),
	8A^2(c+\mu)
	\right\}$-quasiconvex.
	
	Therefore, every curve
	$\gamma\in\Lambda^{c,\mu}_{y_1,y_2}(\Omega,k)$
	is a $B$-uniform curve with
	\[
	B=B(A,c,\mu)=
	\max\left\{
	B_1,
	1+R_1\bigl(e^{4cA^2\log(1+4B_1)+\mu}-1\bigr),
	8A^2(c+\mu)
	\right\}.
	\]
	The proof is complete. 
\end{proof}
For convenience, from now on, we set $B_0=B(A,1,\mu)$ and $B'_0=B(A,1,1)$.


To prove that every inner uniform space is Gromov hyperbolic, by Lemma \ref{lem:vaisala-package}, it suffices to show that every $(c,\mu)$-quasigeodesic triangle is $\vartheta$-thin in $(\Omega,k)$. This criterion is standard in the literature (see \cite{BHK}); once established, the Gromov hyperbolicity of inner uniform spaces follows immediately.

\begin{thm}\label{thm:intrinsic-36}
	Let $(\Omega,d)$ be an $A$-inner uniform space 
	and $k$ the associated quasihyperbolic metric. Then the following statements hold.
	\begin{enumerate}
		\item\label{thm:result1} For all $c\geq1$, $\mu>0$, there exists $\vartheta=\vartheta(A,c,\mu)$ so that every triangle whose three sides are $(c,\mu)$-quasigeodesics  is $\vartheta$-thin in $(\Omega,k)$.
		
		\item\label{thm:result2} If $\alpha:x\curvearrowright y$ is a $(1,\mu)$-quasigeodesic and $p\in\Omega$, then
		\begin{equation}\label{eq:gp-curve-comparison}
			k(p,\alpha)-2\vartheta_0-\mu
			\le
			(x|y)_p
			\le
			k(p,\alpha)+\frac{\mu}{2},
		\end{equation}
		\item\label{thm:result3} $(\Omega,k)$ is $\delta_1$-Gromov hyperbolic, where $\delta_1=\delta_1(A)= 3\vartheta'_0+\frac{3}{2}$.
	\end{enumerate}
	Here $\vartheta_0=\vartheta(A,1,\mu)$ and $\vartheta'_0=\vartheta(A,1,1)$.
\end{thm}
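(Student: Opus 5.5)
The main work is part (1); parts (2) and (3) then follow from Lemma~\ref{lem:vaisala-package} applied to $(\Omega,k)$, which is a length space. The family $\mathcal A$ there is the family of $(1,\mu)$-quasigeodesics in $(\Omega,k)$, which is nonempty for every pair of points.

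For (1), fix a triangle with vertices $x,y,z$ and sides $\alpha:y\curvearrowright z$, $\beta:x\curvearrowright z$, $\gamma:x\curvearrowright y$, each a $(c,\mu)$-quasigeodesic in $(\Omega,k)$. By Theorem~\ref{thm:intrinsic-210}, each side is a $B$-(inner) uniform curve in $(\Omega,d)$ with $B=B(A,c,\mu)$. Fix $w\in\gamma$. By the double cone condition, we may assume $\ell_d(\gamma[x,w])\le B\,d_\Omega(w)$.

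I would follow the argument of \cite[Theorem 3.6]{BHK}, replacing geodesics by these uniform curves. Walk along $\beta$ from $x$ towards $z$ and consider the first point $w'\in\beta$ with $d(w,w')\le \tfrac12 d_\Omega(w)$ or with $d_\Omega(w')$ comparable to $d_\Omega(w)$.

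\textbf{Case (a).} Some point $w'\in\beta\cup\alpha$ satisfies $d(w,w')\le \tfrac12 d_\Omega(w)$. Then $d_\Omega(w')\asymp d_\Omega(w)$, and \eqref{eq:j-k-upper} gives $k(w,w')\le 4A^2\log 3$.

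\textbf{Case (b).} No such point exists. Then $\beta\cup\alpha$ avoids the ball $B_d(w,d_\Omega(w)/2)$. Here I would use the length estimates $\ell_d(\gamma[x,w])\le B\,d_\Omega(w)$ and $\ell_d(\beta)\le B\,\sigma_\Omega(x,z)$. Since $\beta\cup\alpha$ joins $x$ to $y$, it must pass within $d$-distance comparable to $d_\Omega(w)$ of $w$ at a point where $d_\Omega$ is comparable to $d_\Omega(w)$. This uses the cone condition on $\beta$ and $\alpha$ together with the triangle inequality. Estimate \eqref{eq:j-k-upper} (or \eqref{eq:inner upper}) then again bounds $k(w,\beta\cup\alpha)$ by a constant depending on $A,c,\mu$.

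The honest obstacle is case (b), the uniform version of the BHK argument. Points of $\gamma$ with small boundary distance relative to their $d$-distance from $x$ need a careful comparison. I would handle it by choosing the point of $\beta$ at the same $d$-arc-length from $x$ as $w$, or the point of $\alpha$ from $y$. I would then use \eqref{eq:quasihyperbolic inequality} with the Harnack-type bound $|\log(d_\Omega(u)/d_\Omega(v))|\le k(u,v)$ to show that the boundary distances are comparable, and close the argument with \eqref{eq:j-k-upper}. For inner uniform spaces, $d$ is replaced by $\sigma_\Omega$ throughout.

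For (2), apply Lemma~\ref{lem:vaisala-package}. Its proof gives \eqref{eq:vaisala-package} with $\vartheta_1=\vartheta_0=\vartheta(A,1,\mu)$, which is exactly \eqref{eq:gp-curve-comparison}.

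For (3), take $\mu=1$ in the proof of the implication (1)$\Rightarrow$(2) of Lemma~\ref{lem:vaisala-package}. This yields hyperbolicity with constant $3\vartheta'_0+\tfrac32$. Completeness of $(\Omega,k)$ is not part of this statement and is not addressed here.
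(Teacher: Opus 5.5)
Your treatment of (2) and (3) matches the paper: apply Lemma~\ref{lem:vaisala-package} with $\rho=k$, taking $\vartheta_1=\vartheta_0$ for (2) and $\mu=1$ for (3).

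The gap is in (1), exactly at the step you flag as the obstacle, and your sketch for Case (b) does not close. Put $\ell=\ell_d(\gamma[x,w])$; since $x$ is the side realizing the minimum in the double cone condition, $\ell\le\frac12\ell_d(\gamma)$ and $d_\Omega(w)\ge\ell/B$.

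\begin{itemize}
\item The point $v\in\beta$ with $\ell_d(\beta[x,v])=\ell$ only works when $\ell_d(\beta)\ge 2\ell$. If $\ell_d(\beta)<\ell$ it does not exist. If $\ell\le\ell_d(\beta)<2\ell$, the cone condition only gives $d_\Omega(v)\ge\ell_d(\beta[v,z])/B$, which can be arbitrarily small (take $v$ near $z$ and $z$ near $\partial\Omega$).
\item Your fallback, a point of $\alpha$ measured from $y$, does not help. Since $w$ lies near $x$, such a point can be at $\sigma_\Omega$-distance of order $\ell_d(\gamma)/B$ from $w$, which $\ell$ does not control. The natural continuation of $\beta$ is $\alpha$ measured from $z$.
\item The Harnack bound $|\log(d_\Omega(u)/d_\Omega(v))|\le k(u,v)$ runs in the wrong direction. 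It turns a bound on $k$ into comparability of boundary distances, whereas you need the converse.
\item The inequality you cite, $\ell_d(\beta)\le B\sigma_\Omega(x,z)$, is an upper bound on $\beta$. What you need is a lower bound on the total length of $\beta\cup\alpha$.
\end{itemize}

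The missing idea is to use the quasiconvexity of the side containing $w$,
\[
\ell_d(\beta)+\ell_d(\alpha)\ge\sigma_\Omega(x,y)\ge\tfrac1B\,\ell_d(\gamma)\ge\tfrac{2\ell}{B},
\]
and to work at the scale $\ell/(2B)$ instead of $\ell$.
\begin{itemize}
\item If $\ell_d(\beta)\ge\ell/B$, take $v\in\beta$ with $\ell_d(\beta[x,v])=\ell/(2B)$.
\item Otherwise $\ell_d(\alpha)>\ell/B$, and you take $v\in\alpha$ with $\ell_d(\alpha[z,v])=\ell/(2B)$.
\end{itemize}
In either case $v$ has arc length at least $\ell/(2B)$ on both sides within its own curve. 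The double cone condition then gives $d_\Omega(v)\ge\ell/(2B^2)$, while $\sigma_\Omega(w,v)\le\bigl(1+\tfrac{3}{2B}\bigr)\ell$. Now \eqref{eq:inner upper} yields $k(w,v)\le 4A^2\log\bigl(1+B(2B+3)\bigr)$.

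This is precisely the paper's argument, and it makes your split into Cases (a) and (b) unnecessary.
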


\begin{proof}
	Let $(\Omega,d)$ be $A$-inner uniform space 
	and let $\alpha:x\curvearrowright y,\,\beta:y\curvearrowright z,\,\gamma:z\curvearrowright x$
	be $(c,\mu)$-quasigeodesics in $(\Omega,k)$. By Theorem~\ref{thm:intrinsic-210}, all three are
	$B$-inner uniform curves in $(\Omega,d)$, where $B=B(A,c,\mu)$. Fix $u\in\alpha$, and assume
	without loss of generality that
	\[
	\ell_d(\alpha[x,u])
	\wedge
	\ell_d(\alpha[u,y])= \ell_d(\alpha[x,u]).
	\]
	Then
	\begin{equation*}
		d_\Omega(u)
		\ge
		\frac{1 }{B}\ell_d(\alpha[x,u]).
	\end{equation*}
	Moreover,
	\[
	\frac{2}{B} \ell_d(\alpha[x,u])
	\le\frac{ 1}{B}\ell_d(\alpha)\leq \sigma_\Omega(x,y)
	\le \sigma_\Omega(x,z)+\sigma_\Omega(z,y)\le\ell_d(\gamma)+\ell_d(\beta).
	\]
	
	If $\ell_d(\gamma)< \frac{1}{B} \ell_d(\alpha[x,u])$, then $\ell_d(\beta)> \frac{1}{B} \ell_d(\alpha[x,u])$. Choose $v\in\beta$ so that
	\[
	\ell_d(\beta[z,v])=\frac{1}{2B} \ell_d(\alpha[x,u]),
	\]
	which implies $\ell_d(\beta[v,y])\ge  \frac{1}{2B} \ell_d(\alpha[x,u])$.
	Then 
	$$ \sigma_\Omega(u,v)\leq\ell_d(\alpha[x,u])+\ell_d(\gamma)+\ell_d(\beta[z,v])
	\le
	\frac{2B+3}{2B} \ell_d(\alpha[x,u]),$$
	and 
	\[
	d_\Omega(v)
	\ge
	\frac{ 1}{2B^2}\ell_d(\alpha[x,u]).
	\]
	
	If $\ell_d(\gamma)\ge  \frac{1}{B} \ell_d(\alpha[x,u])$, choose $v\in\gamma$ so that
	\[
	\ell_d(\gamma[x,v])=\frac{1}{2B} \ell_d(\alpha[x,u]).
	\]
	Similarly, the same estimates hold, with a smaller bound for $\sigma_\Omega(u,v)$. 
	
	Thus, in both
	cases,
	\[
	d_\Omega(u)\wedge d_\Omega(v)
	\ge
	\frac{1}{2B^2}\ell_d(\alpha[x,u]),
	\]
	and
	\[
	\sigma_\Omega(u,v)
	\le
	\frac{2B+3}{2B} \ell_d(\alpha[x,u]).
	\]
	By \eqref{eq:inner upper},
	\[
	k(u,v)
	\le
	4A^2\log\bigl(1+B(2B+3)\bigr).
	\]
	Hence
	\[
	k(u,\beta\cup\gamma)\le \vartheta,
	\]
	where $\vartheta=\vartheta(A,c,\mu)$.
	The same argument for the other two sides shows that the triangle is $\vartheta$-thin. This proves statement \eqref{thm:result1}.
	
	Lemma~\ref{lem:vaisala-package}, applied with $\rho=k$, gives
	\eqref{eq:gp-curve-comparison}, while applied with $\rho=k$, $c=1$ and $\mu=1$, shows that $(\Omega,k)$ is $\delta_1$-Gromov hyperbolic, where $\delta_1=3\vartheta'_0+\frac{3}{2}$ and $\vartheta'_0=\vartheta_0(A,1,1).$
	Therefore, statements \eqref{thm:result2} and \eqref{thm:result3} hold.
\end{proof}
	
\subsection{Roughly starlikeness of bounded uniform spaces}	
As for the boundary correspondence, we obtain the following bijection between the boundary of a uniform space and its Gromov boundary, which parallels the boundary identification in \cite[Section 3]{BHK}. 
\begin{prop}\label{prop:boundary}
	Let $(\Omega,d)$ be an $A$-uniform space and 
	$k$ the associated quasihyperbolic metric. Then the map
	\[
	\Psi:\partial_G(\Omega,k)\longrightarrow\partial^{*}\Omega=\partial\Omega\cup\{\infty\},
	\quad
	\Psi([\{x_n\}])=\lim_{n\to\infty}x_n,
	\]
	is a bijection. In particular, if $(\Omega,d)$ is bounded, then $\partial^{*}\Omega=\partial\Omega$.
\end{prop}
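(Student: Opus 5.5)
We follow the pattern of \cite[Proposition 3.12]{BHK}. The main tool is a two-sided estimate of the Gromov product in $(\Omega,k)$ in terms of $d$. Fix a base point $p\in\Omega$. We first show that for $x,y\in\Omega$,
\[
(x|y)_p \;\approx\; \log\frac{d_\Omega(p)}{\min\{\,d(x,y)+d_\Omega(x)\vee d_\Omega(y),\; d_\Omega(p)\,\}}
\]
up to an additive constant depending only on $A$ (with obvious modifications when $d(x,y)$ is large compared to $d_\Omega(p)$, as happens in the unbounded case).

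To obtain this estimate we take a $(1,1)$-quasigeodesic $\alpha:x\curvearrowright y$ in $(\Omega,k)$. By Theorem~\ref{thm:intrinsic-210}, $\alpha$ is a $B_0'$-uniform curve in $(\Omega,d)$. By Theorem~\ref{thm:intrinsic-36}\eqref{thm:result2}, $(x|y)_p=k(p,\alpha)+O(1)$. We then bound $k(p,\alpha)$ from both sides using \eqref{eq:quasihyperbolic inequality} and \eqref{eq:j-k-upper}. The key input is that the maximal point $z\in\alpha$, i.e.\ the one where $d_\Omega$ is largest, has $d_\Omega(z)\asymp d(x,y)+d_\Omega(x)\vee d_\Omega(y)$; this follows from the double cone and quasiconvexity conditions. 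We also need that $k(p,\alpha)$ is comparable to $k(p,z)$ or to $\log\bigl(d_\Omega(p)/d_\Omega(z)\bigr)$.

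With this estimate we prove the following, in order:
\begin{enumerate}
\item \emph{Well-definedness.} If $\{x_n\}$ is a Gromov sequence, then $(x_n|x_m)_p\to\infty$. The estimate forces one of two behaviours. Either $d(x_n,x_m)+d_\Omega(x_n)\to 0$, so $\{x_n\}$ is $d$-Cauchy and converges in the completion to a point of $\partial\Omega$ (it cannot converge inside $\Omega$, since $k(p,x_n)\to\infty$). Or, in the unbounded case, $x_n\to\infty$. Equivalent sequences have $(x_n|y_n)_p\to\infty$ and hence the same limit.
\item \emph{Injectivity.} If $x_n\to\xi$ and $y_n\to\xi$ with $\xi\in\partial\Omega$, then $d(x_n,y_n)+d_\Omega(x_n)\vee d_\Omega(y_n)\to 0$, so $(x_n|y_n)_p\to\infty$ by the lower half of the estimate. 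For $\xi=\infty$ we use instead the version of the estimate with $d(x,y)$ large, comparing with $|x|$ via the base point.
\item \emph{Surjectivity.} Given $\xi\in\partial\Omega$, pick $x_n\in\Omega$ with $d(x_n,\xi)\to0$. Then $d_\Omega(x_n)\le d(x_n,\xi)\to0$ and $d(x_n,x_m)\to 0$, so the estimate gives $(x_n|x_m)_p\to\infty$. For $\xi=\infty$ we take $x_n\to\infty$ analogously.
\end{enumerate}
The last claim, that $\partial^{*}\Omega=\partial\Omega$ when $\Omega$ is bounded, is immediate, since $\infty$ is then not attained.

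The main obstacle is the lower bound $(x|y)_p\gtrsim \log\bigl(d_\Omega(p)/(d(x,y)+d_\Omega(x)\vee d_\Omega(y))\bigr)$. Since $\Omega$ is not locally compact, we cannot use geodesics. Instead we work through Lemma~\ref{lem:vaisala-package} with $(1,1)$-quasigeodesics, and must show that every point $w\in\alpha$ satisfies $k(p,w)\ge\log\bigl(d_\Omega(p)/d_\Omega(w)\bigr)-C$ together with $d_\Omega(w)\lesssim d(x,y)+d_\Omega(x)\vee d_\Omega(y)$. The second inequality comes from \eqref{eq:block-length}-type control of $\ell_d(\alpha)$ by $d(x,y)$; the first is \eqref{eq:quasihyperbolic inequality}. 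The unbounded case near $\infty$ needs separate care: there we would use $\log(1+d(p,x)/d_\Omega(p))$ in place of the boundary distance.
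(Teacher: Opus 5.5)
Your treatment of finite boundary points is the paper's argument reorganized: a $(1,1)$-quasigeodesic $\alpha$ in $(\Omega,k)$, its uniformity from Theorem~\ref{thm:intrinsic-210}, the comparison $(x|y)_p=k(p,\alpha)+O(1)$ from \eqref{eq:gp-curve-comparison}, then \eqref{eq:quasihyperbolic inequality} and \eqref{eq:j-k-upper}. However, the estimate you state is false as written. Write $D=d(x,y)+d_\Omega(x)\vee d_\Omega(y)$. Only the lower bound $(x|y)_p\ge\log\frac{d_\Omega(p)}{B'_0D}-C(A)$ holds with an additive error, and it is the easy half, since $d_\Omega\le d_\Omega(x)+\ell_d(\alpha)\le B'_0D$ on $\alpha$. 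The upper bound comes from \eqref{eq:j-k-upper}, so it carries the factor $4A^2$ and a constant depending on $p$ and on a bound for $d(p,x),d(p,y)$. An additive version already fails in the bounded convex (hence uniform) domain $\{re^{i\phi}:0<r<1,\ |\phi|<\theta/2\}$ with $\theta$ fixed and small. For $x=(r,0)$ one has $k(p,x)\ge\frac{1}{\sin(\theta/2)}\log\frac1r-O(1)$, because $d_\Omega(z)\le|z|\sin(\theta/2)$, whereas $\log\frac{d_\Omega(p)}{d_\Omega(x)}=\log\frac1r+O(1)$. You only use the estimate qualitatively: for $d$-bounded sequences, $(x_n|y_n)_p\to\infty$ if and only if $D(x_n,y_n)\to0$. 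So this part is repairable.

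The real gap is the point $\infty$, which the statement covers when $\Omega$ is unbounded and which you defer to ``obvious modifications''. There your formula is wrong, not merely imprecise. In the upper half-plane with $p=(0,1)$, the points $x_n=(n,n)$, $y_n=(n,2n)$ have $D\to\infty$, yet $(x_n|y_n)_p\ge k(p,x_n)-k(x_n,y_n)\ge k(p,x_n)-\log2\to\infty$. Two facts are needed, and you supply neither.

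(a) If $d(p,x)\wedge d(p,y)\ge R$, then $(x|y)_p\to\infty$ as $R\to\infty$. Points $z\in\alpha$ with $d(p,z)\ge R/2$ have $k(p,z)\ge\log\bigl(1+\frac{R}{2d_\Omega(p)}\bigr)$ by \eqref{eq:quasihyperbolic inequality}. Points with $d(p,z)<R/2$ have both subarcs of $d$-length $>R/2$, hence $d_\Omega(z)>R/(2B'_0)$ by the double cone condition and $k(p,z)\ge\log\frac{R}{2B'_0d_\Omega(p)}$. Your hint about $\log(1+d(p,x)/d_\Omega(p))$ covers only the first half. Fact (a) gives injectivity and surjectivity at $\infty$.

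(b) If $d(p,x)\le R_0$ and $d(x,y)\ge2$, then $(x|y)_p\le C(A,R_0,p)$. Take $z\in\alpha$ with $\ell_d(\alpha[x,z])=1$, so that $d_\Omega(z)\ge1/B'_0$ and $d(p,z)\le R_0+1$, and combine \eqref{eq:j-k-upper} with the upper half of \eqref{eq:gp-curve-comparison}. Only (b) shows that a Gromov sequence cannot contain both a bounded subsequence and one tending to $\infty$. Hence a Gromov sequence is either bounded or tends to $\infty$, and equivalent sequences are of the same type. Your well-definedness step asserts this dichotomy without proof.

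The paper proves (b) in exactly this way. It proves (a) with $(1,1)$-quasigeodesics from the base point to $x_n$ and to $y_m$, rather than between $x_n$ and $y_m$.
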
	
\begin{proof}
	We divide the proof into three main steps.
	\smallskip
	
	\textbf{Step 1.} $\Psi$ is well-defined. 
	\smallskip 
	
	Let $\{x_n\}$ be a Gromov sequence in $(\Omega,k)$. We claim that if $\{x_n\}$ is bounded, then it is a $d$-Cauchy sequence and $x_n\xrightarrow{n\to\infty} a\in \partial\Omega$.
	
	Suppose the claim fails. Then there exist $\varsigma>0$ and
	sequences of indices $n_j,m_j\to\infty$ such that $d(x_{n_j},x_{m_j})\ge \varsigma.$
	Fix $w\in\Omega$ so that $d(w, x_n)\leq C'$. For each $j\in \mathbb{N}$, choose a $(1,1)$-quasigeodesic
	$\gamma_j:x_{n_j}\curvearrowright x_{m_j}$ in $(\Omega,k)$.
	Then $\gamma_j$ is a $B'_0$-uniform curve by Theorem~\ref{thm:intrinsic-210}.
	Hence
	$$\ell_d(\gamma_j)\leq B'_0\,d(x_{n_j}, x_{m_j})\leq 2B'_0C'.$$
	Let $z_j\in \gamma_j$ bisect $\ell_d(\gamma_j)$. Then 
	\[
	d_\Omega(z_j)
	\ge
	\frac{\ell_d(\gamma_j)}{2B'_0}
	\ge
	\frac{\varsigma}{2B'_0}
	\]
	and
	\[
	k(w,z_j)\stackrel{\eqref{eq:j-k-upper}}{\leq}4A^2\log\left(1+
	\frac{d(w,z_j)}
	{d_\Omega(w)\wedge d_\Omega(z_j)}\right)
	\le4A^2\log\left(1+\frac{2B'_0C'}
	{d_\Omega(w)\wedge\frac{\varsigma}{2B'_0}}
	\right)
	=:C_\varsigma.
	\]
	This implies that $k(w,\gamma_j)\le C_\varsigma$. Moreover, 
	the upper bound in \eqref{eq:gp-curve-comparison} yields
	\[
	(x_{n_j}|x_{m_j})_w
	\le
	C_\varsigma+\frac12,
	\]
	contradicting with the definition of a Gromov sequence. Thus $\{x_n\}$ is
	$d$-Cauchy sequence.
	Since $k(w,x_n)=(x_n|x_n)_w\xrightarrow{n\to\infty}\infty$ and 
	$$k(w,x_n)\leq 4A^2\log \left(1+\frac{d(w,x_n)}{d_{\Omega}(w)\wedge d_{\Omega}(x_n)}\right)\leq 4A^2\log \left(1+\frac{C'}{d_{\Omega}(w)\wedge d_{\Omega}(x_n)}\right),$$
	we infer that $d_{\Omega}(x_n)\xrightarrow{n\to\infty}0$. 
	Hence $x_n\xrightarrow{n\to\infty} a\in \partial\Omega$. This proves the claim. Consequently, if $(\Omega,d)$ is bounded, then $\partial^{*}\Omega=\partial\Omega$.
	
	Next suppose that $\{x_n\}$ and $\{y_n\}$ are equivalent Gromov sequences. Then 
	$\{x_n\}$ and $\{y_n\}$ are either both  bounded or both  unbounded.
	Indeed, if not, we may assume without loss of generality that $\{x_n\}$ is bounded and $\{y_n\}$ is unbounded. 
	
	For each $n\in \mathbb{N}$, choose a $(1,1)$-quasigeodesic
	$\gamma_n:x_{n}\curvearrowright y_{n}$ in $(\Omega,k)$.
	Then $\gamma_n$ is a $B'_0$-uniform curve
	and
	$\ell_d(\gamma_n)\geq d(x_n,y_n)\xrightarrow{n\to\infty}\infty.$
	For sufficiently large n, select $z_n\in \gamma_n$ so that $\ell_d(\gamma_n[x_n,z_n])=1$. Then
	$d_\Omega(z_n)\ge
	1/B'_0$ and	by \eqref{eq:j-k-upper}, it holds 
	$$k(w,z_{n})\leq 4A^2\log\left(1+\frac{d(w,z_{n})}{d_\Omega(w)\wedge d_\Omega(z_{n})}\right)\leq 4A^2\log\left(1+\frac{d(w,x_{n})+1}{d_\Omega(w)\wedge\frac{1}{B'_0}}\right).$$
	Hence $k(w,\gamma_{n})$ is bounded. Combining this with  \eqref{eq:gp-curve-comparison}, we obtain that $(x_{n}|y_{n})_w$ is bounded, which contradicts with $(x_{n}|y_{n})_w\to\infty$ as $n\to\infty.$

	If $\{x_n\}$ and $\{y_n\}$ are unbounded, then $x_n, y_n\xrightarrow{n\to\infty} \infty$. Next, suppose that $\{x_n\}$ and $\{y_n\}$ are bounded.
	Assume $x_n\to a,\,y_n\to b$ in the $d$-completion as $n\to\infty$. If $a\ne b$, then for all sufficiently large $n$, we have 
	\[
	d(x_n,y_n)\ge\frac12d(a,b).
	\]
	Applying the preceding midpoint argument to a $(1,1)$-quasigeodesic joining
	$x_n$ to $y_n$ in $(\Omega,k)$, we obtain a uniform upper bound for
	$(x_n|y_n)_w$ similar to the one above, contradicting again with $(x_{n}|y_{n})_w\to\infty$ as $n\to\infty$. Hence $a=b$, and $\Psi$ is
	well-defined.
	\smallskip
	
	\textbf{Step 2.} $\Psi$ is injective.
	\smallskip 
	
	Let $\xi=[\{x_n\}],\,\eta=[\{y_n\}]
	\in\partial_G(\Omega,k)$
	and suppose that $\Psi(\xi)=\Psi(\eta)=a\in\partial^{*}\Omega.$
	Thus $ x_n\to a,\, y_n\to a$ in the $d$-completion as $n\to\infty.$ Our aim is to show that $\xi=\eta$. 
	\smallskip 
	
	\textbf{Case 1}: $a=\infty$.
	\smallskip 
	
	For each $n,m\in \mathbb{N}$, choose $(1,1)$-quasigeodesics
	$\gamma_{n}:w\curvearrowright x_n$ and $\gamma'_{m}:w\curvearrowright y_m$ in $(\Omega,k)$.
	Then $\gamma_{n}$ and $\gamma'_{m}$ are $B'_0$-uniform curve in $(\Omega,d)$. Parameterize $\gamma_{n}$ and $\gamma'_{m}$ by $d$-arc-length  from $w$. For $s=\frac{1}{2}\left(\ell_d(\gamma_n)\wedge \ell_d(\gamma'_m)\right)$, 
	$d_{\Omega}(\gamma_{n}(s))\geq s/B'_0$ and $d_{\Omega}(\gamma'_{m}(s))\geq s/B'_0.$  
	Then by \eqref{eq:quasihyperbolic inequality},
	$$k(w,\gamma_{n}(s))\geq\log\frac{d_{\Omega}\left(\gamma_{n}(s)\right)}{d_{\Omega}(w)}\geq\log\frac{s}{B'_0d_{\Omega}(w)}, $$
	and 
	$$k(w,\gamma'_{m}(s))\geq\log\frac{d_{\Omega}\left(\gamma'_{m}(s)\right)}{d_{\Omega}(w)}\geq\log\frac{s}{B'_0d_{\Omega}(w)}. $$
	It follows from \eqref{eq:j-k-upper} that
	$$k(\gamma_{n}(s), \gamma'_{m}(s))\leq 4A^2\log\left(1+\frac{d\left(\gamma_{n}(s), \gamma'_{m}(s)\right)}{d_{\Omega}\left(\gamma_{n}(s)\right)\wedge d_{\Omega}\left(\gamma'_{m}(s)\right)}\right)\leq 4A^2\log(1+2B'_0).$$
	In addition, since $\gamma_n$ and $\gamma'_m$ are $(1,1)$-quasigeodesics in $(\Omega,k)$, we have
	$$k(w,x_n)\geq k(w,\gamma_{n}(s))+k(\gamma_{n}(s),x_n)-1,$$
	$$k(w,y_m)\geq k(w,\gamma'_{m}(s))+k(\gamma'_{m}(s),y_m)-1.$$
	Combining these inequalities with triangle's inequality, we obtain
	\beqq
	\begin{aligned}
		(x_n|y_m)_w&=\frac{1}{2}\left(k(w,x_n)+k(w,y_m)-k(x_n,y_m)\right)\\
		&\geq  \frac{1}{2}\left(k(w,\gamma_{n}(s))+k(w,\gamma'_{m}(s))-k\left(\gamma_{n}(s),\gamma'_{m}(s)\right)\right)-1\\
		&\geq\log\frac{s}{B'_0d_{\Omega}(w)}- 2A^2\log(1+2B'_0)-1.
	\end{aligned}
	\eeqq
	Since $s=\frac{1}{2}\left(\ell_d(\gamma_n)\wedge \ell_d(\gamma'_m)\right)\xrightarrow{n,m\to\infty}\infty$, $(x_n|y_m)_w \xrightarrow{n,m\to\infty}\infty$, which shows $\{x_n\}$ and $\{y_n\}$ are equivalent Gromov sequences. Thus $\xi=\eta$.
	\smallskip 
	
	\textbf{Case 2}: $a\in\partial\Omega$.
	\smallskip 
	
	For $n,m\ge1$, choose a $(1,1)$-quasigeodesic
	$\gamma_{n,m}:x_n\curvearrowright y_m$ in $(\Omega,k)$.
	Then $\gamma_{n,m}$ is a $B'_0$-uniform curve in $(\Omega,d)$. Hence
	\[
	\ell_d(\gamma_{n,m})
	\le B'_0\,d(x_n,y_m)\xrightarrow{n,m\to\infty}0.
	\]
	For every $z\in\gamma_{n,m}$, 
	we have
	\[
	\begin{aligned}
		d_\Omega(z)
		&\le d(z,a)\le d(z,x_n)+d(x_n,a)\le \ell_d(\gamma_{n,m})+d(x_n,a),
		\end{aligned}
	\]
	which implies
	$\sup\limits_{z\in\gamma_{n,m}}d_\Omega(z)\xrightarrow{n,m\to\infty}0$.
	It follows that
	\[
	k(w,\gamma_{nm})
	\ge
	\log\frac{d_\Omega(w)}
	{\sup_{z\in\gamma_{n,m}}d_\Omega(z)}
	\xrightarrow{n,m\to\infty}\infty.
	\]
	In this case, we infer from \eqref{eq:gp-curve-comparison} that $(x_n|y_m)_w\xrightarrow{n,m\to\infty}\infty$.	Thus $\xi=\eta$.
	
	This proves that $\Psi$ is injective.
	\smallskip
	
	\textbf{Step 3.} $\Psi$ is surjective.
	\smallskip 
	
	For each $a\in\partial^{*}\Omega$, we shall prove that there is a Gromov sequence  $\{x_n\}$ so that $\Psi([\{x_n\}])=a$.
	
	If $a=\infty$, then $(\Omega,d)$ is unbounded and there exists an unbounded sequence $\{x_n\}$ so that $x_n\to\infty$ as $n\to\infty$. By the preceding discussion, we have $
	(x_n|x_m)_w\xrightarrow{n,m\to\infty}\infty$. Hence $\{x_n\}$ is a Gromov sequence and $\Psi([\{x_n\}])=\infty=a$.
	
	If $a\in\partial\Omega$, then choose $x_n\in\Omega$ such
	that $x_n\xrightarrow{n\to\infty} a$ with respect to $d$. For $n,m\ge1$, let
	$\gamma_{nm}:x_n\curvearrowright x_m$
	be a $(1,1)$-quasigeodesic in $(\Omega,k)$. Then $\gamma_{nm}$ is $B'_0$-quasiconvex in $(\Omega,d)$ and so
	\[
	\ell_d(\gamma_{nm})
	\le B'_0\,d(x_n,x_m)
	\xrightarrow{n,m\to\infty}0.
	\]
	For every $z\in\gamma_{nm}$, we have
	$$
	d_\Omega(z)
	\le d(z,a)\\
	\le d(z,x_n)+d(x_n,a)\\
	\le \ell_d(\gamma_{nm})+d(x_n,a),
	$$
	which implies $\sup\limits_{z\in\gamma_{nm}}d_\Omega(z)\xrightarrow{n,m\to\infty}0$. Consequently, $k(w,\gamma_{nm})\xrightarrow{n,m\to\infty}\infty$.
	The lower bound in \eqref{eq:gp-curve-comparison} gives
	$
	(x_n|x_m)_w\xrightarrow{n,m\to\infty}\infty.
	$
	Hence $\{x_n\}$ is a Gromov sequence and $\Psi([\{x_n\}])=a$. 
	
	This proves surjectivity and thus completes the proof.	
\end{proof}

With this proposition at hand, we next show that a bounded $A$-uniform space is roughly starlike.

\begin{thm}\label{thm:roughly starlike}
	Let $(\Omega,d)$ be a bounded $A$-uniform space and $k$ be the quasihyperbolic metric of $\Omega$. Choose $w\in\Omega$ such that
	\begin{equation}\label{eq:deep-basepoint-road}
		d_\Omega(w)
		\ge
		\frac12\sup_{z\in\Omega}d_\Omega(z).
	\end{equation}
	Then for each $h>0$,
	$(\Omega,k)$ is $(H,\nu,h)$-roughly starlike with respect to $w$, where $$H=H(A,h)=4A^2\log\left(1+18B(A,1,h)\right)$$ and $\nu=\nu(A,h)=4\delta_1+2h$, with $\delta_1=\delta_1(A)$ given by Theorem~\textup{\ref{thm:intrinsic-36}}.
\end{thm}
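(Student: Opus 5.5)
Fix $x\in\Omega$. The plan is to build a road from $w$ that passes within $k$-distance $H$ of $x$, by sending a quasihyperbolic quasigeodesic from $w$ through a point near $x$ out to a boundary point. Since $\Omega$ is bounded, $\partial\Omega\neq\emptyset$.

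\emph{Step 1: choose the boundary point.} Pick $a\in\partial\Omega$ with $d(x,a)\le 2d_\Omega(x)$, and points $y_n\in\Omega$ with $y_n\to a$ in $d$. By Proposition~\ref{prop:boundary}, $\{y_n\}$ is a Gromov sequence with $\Psi([\{y_n\}])=a$.

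\emph{Step 2: estimates along $w\to y_n$.} For each $n$, choose an $h$-short arc $\alpha_n:w\curvearrowright y_n$ in $(\Omega,k)$. Such an arc is a $(1,h)$-rough quasigeodesic, and after adjustment a $(1,h)$-quasigeodesic in the sense of Definition~\ref{def:rough quasigeodesic}, since $k(w,y_n)\ge1$ for large $n$. By Theorem~\ref{thm:intrinsic-210}, $\alpha_n$ is a $B=B(A,1,h)$-uniform curve in $(\Omega,d)$. Near its endpoint $y_n$, the double cone condition makes $\alpha_n$ move away from $\partial\Omega$. Concretely, take the point $z_n\in\alpha_n$ with $\ell_d(\alpha_n[z_n,y_n])$ comparable to $d_\Omega(x)$, for instance equal to $\min\{d_\Omega(x),\tfrac12\ell_d(\alpha_n)\}$.
\begin{itemize}
\item If $\ell_d(\alpha_n[z_n,y_n])=d_\Omega(x)$, then $d_\Omega(z_n)\ge d_\Omega(x)/B$ and $d(z_n,x)\le d_\Omega(x)+d(y_n,a)+d(a,x)\le 4d_\Omega(x)$ for large $n$.
\item If instead $\ell_d(\alpha_n)<2d_\Omega(x)$, then $w$ itself is close to $x$ relative to $d_\Omega(w)\ge\tfrac12\sup d_\Omega\ge\tfrac12 d_\Omega(x)$. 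This is where \eqref{eq:deep-basepoint-road} is used.
\end{itemize}
In either case \eqref{eq:j-k-upper} gives $k(x,\alpha_n)\le 4A^2\log(1+C B)$ with an absolute $C$; tracking constants should yield $C=18$, giving $H$. The choice of $z_n$ may need tuning so that the constants come out exactly as $18B$.

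\emph{Step 3: make $(\alpha_n)$ a road.} Condition (1) of Definition~\ref{def:road} holds since each $\alpha_n$ is $h$-short. For (2), $\ell_k(\alpha_n)\ge k(w,y_n)\to\infty$, and passing to a subsequence makes the lengths increasing. For (3), we need the length maps $g_{ij}:\alpha_i\to\alpha_j$ to move points by at most $\nu=4\delta_1+2h$. Since $(\Omega,k)$ is $\delta_1$-hyperbolic by Theorem~\ref{thm:intrinsic-36}, Väisälä's construction of roads from Gromov sequences (\cite[Lemma 6.13 and §6]{Vaisala}) gives this: choose $y_n$ with $(y_i|y_j)_w\ge \ell_k(\alpha_i)+$const for $i<j$. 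Then the $h$-short triangle with vertices $w,y_i,y_j$ is thin along the common initial portion of $\alpha_i$ and $\alpha_j$, which gives $|g_{ij}u-u|_k\le 4\delta_1+2h$. Passing to this subsequence keeps Step 2 valid, since that estimate holds for every large $n$.

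\emph{Main obstacle.} The hardest part is Step 3: verifying the road condition with the precise constant $\nu$ while keeping a single road whose locus stays near $x$. I would first quote Väisälä's existence of a $(4\delta+2h,h)$-road from $w$ converging to a given Gromov boundary point. That road's arcs are $h$-short from $w$ to points $y_n\to a$, so the Step 2 estimate applies to any one of its arcs.
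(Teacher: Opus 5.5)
Your final plan is the paper's proof: quote V\"ais\"al\"a's existence theorem for a $(4\delta_1+2h,h)$-road from $w$ to $\Psi^{-1}(a)$, whose endpoints converge to $a$ in $d$ by the well-definedness part of Proposition~\ref{prop:boundary}, and then bound $k(x,\alpha_i)$ for a single arc via Theorem~\ref{thm:intrinsic-210} and \eqref{eq:j-k-upper}. The paper avoids your two cases by taking the point at $d$-length $\frac18 d_\Omega(x)$ from the endpoint (both subarcs are long since $\ell_d(\alpha_i)>\frac38 d_\Omega(x)$), which produces $18B$; your cases give $4A^2\log(1+\max\{4B,10\})\le H$, which equally suffices, and do rely on the quoted theorem, since your own subsequence construction in Step 3 cannot work as written because $(y_i|y_j)_w\le k(w,y_i)\le \ell_k(\alpha_i)$ for all $j$ (V\"ais\"al\"a's construction truncates the arcs instead).
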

\begin{proof}
	Fix $x\in\Omega$. Our aim is to show that there exists a $(\nu,h)$-road $\bar{\alpha}:w\curvearrowright\xi\in \partial_{G}(\Omega,k)$ such that 
	\[
	k(x,\bar{\alpha})\leq H. 
	\]
	
	Choose $a\in\partial\Omega$ such that
	\begin{equation}\label{eq:approx-nearest-road}
		d(x,a)<2d_\Omega(x).
	\end{equation}
	By Proposition~\ref{prop:boundary}, there exists $\xi\in\partial_G(\Omega,k)$ such that $\Psi(\xi)=a$, where $\xi=[\{u_n\}]$ and $d(u_n,a)\to0$ as $n\to\infty$. 
	
	 By Theorem~\ref{thm:intrinsic-36}, $(\Omega,k)$ is $\delta_1$-Gromov hyperbolic and thus 
	\cite[Theorem 6.7]{Vaisala} yields that, for every $h>0$, there exists a $(\nu,h)$-road
	$\bar{\alpha}:w\curvearrowright\xi,\, \alpha_i:w\curvearrowright u_i$ with $\nu=4\delta_1+2h$.
	The endpoint sequence $\{u_i\}$ represents $\xi$.
	Note that $d(u_i,a)\to0$ and $k(w,u_i)=(u_i|u_i)_w\to\infty$ as $i\to\infty$.
	We can choose $i$ large so that
	\[
	d(u_i,a)<\frac 18d_\Omega(x)
	\quad\text{and}\quad
	k(w,u_i)\ge1.
	\]
	
	Since $\alpha_i$ is $h$-short and $k(w,u_i)\ge1$, 
	it is a $(1,h)$-quasigeodesic in $(\Omega,k)$. Then $\alpha_i$ is a $B'$-uniform curve in $(\Omega,d)$ by Theorem \ref{thm:intrinsic-210}, where $B'=B(A,1,h)$.
	As $d(w,a)\ge d_\Omega(w)\geq\frac{1}{2}d_\Omega(x)$, we have
	\[
	\ell_d(\alpha_i)\ge d(w,u_i)
	\ge d(w,a)-d(a,u_i)
	> \frac{3}{8}d_\Omega(x).
	\]
	Choose $y_i\in\alpha_i$ such that
	$
	\ell_d(\alpha_i[y_i,u_i])=\frac 18d_\Omega(x).
	$
	Then
	$
	\ell_d(\alpha_i[w,y_i])>\frac 14d_\Omega(x)
	$
	and so 
	\[
	\frac{1}{8B'}d_\Omega(x)=\frac{1}{B'}\ell_d\left(\alpha_i[w,y_i]\right)\wedge \ell_d\left(\alpha_i[u_i,y_i]\right) \leq d_\Omega(y_i).
		\]
	On the other hand, by \eqref{eq:approx-nearest-road}, we have 
	\[
	\begin{aligned}
		d(x,y_i)
		\le d(x,a)+d(a,u_i)+d(u_i,y_i)<\frac{9}{4}d_\Omega(x).
	\end{aligned}
	\]
	Consequently, we infer
	\[
	\begin{aligned}
	k(x,\alpha_i)\leq	k(x,y_i)
		\stackrel{\eqref{eq:j-k-upper}}{\le}
		4A^2\log\left(
		1+
		\frac{d(x,y_i)}
		{d_\Omega(x)\wedge d_\Omega(y_i)}\right)\le 4A^2\log(1+18B').
	\end{aligned}
	\]
	It follows that
	\[
	k\bigl(x,\bar{\alpha}\bigr)\le 4A^2\log(1+18B').
	\]
	As $x$ was arbitrary, this implies that $(\Omega,k)$ is $(H, \nu,h)$-roughly starlike with respect to $w$, where $H=H(A,h)=4A^2\log(1+18B')$, $B'=B(A,1,h)$ and $\nu=\nu(A,h)=4\delta_1+2h$.
\end{proof}	

\subsection{Proof of Theorem~\textup{\ref{thm:Gromov and roughly starlike}}}
\begin{proof}[Proof of Theorem~\textup{\ref{thm:Gromov and roughly starlike}}]
Since $(\Omega,d)$ is $A$-quasiconvex,  the identity map $(\Omega,d)\to (\Omega,\ell_d)$ is a homeomorphism. Then \cite[Proposition 2.8]{BHK} implies that $(\Omega,k)$ is complete. That $(\Omega,k)$ is $\delta_1(A)$-Gromov hyperbolic follows from Theorem~\ref{thm:intrinsic-36}. If $(\Omega,d)$ is additionally bounded, then Theorem~\ref{thm:roughly starlike} gives the roughly starlikeness. This completes the proof of Theorem~\ref{thm:Gromov and roughly starlike}.
\end{proof}

\section{Uniformization of non-proper length spaces}\label{sec:final uniformization}
This section is devoted to the proof of Theorem~\ref{thm:one-to-one correspondence}. 
For $0 < \varepsilon \le \varepsilon_1(\delta)$, let $\mathcal{D}$ denote the \emph{uniformization} 
$X \longrightarrow X_\varepsilon$, the letter $\mathcal{D}$ abbreviating \emph{dampening}, and let 
$\mathcal{Q}$ denote the \emph{quasihyperbolization} $(\Omega,d) \longrightarrow (\Omega, k)$. 
Theorem~\ref{thm:uniform} entails that $\mathcal{D}$ sends complete Gromov hyperbolic spaces 
to bounded uniform spaces. Conversely, Theorem~\ref{thm:Gromov and roughly starlike} entails that $\mathcal{Q}$ sends bounded 
uniform spaces to complete roughly starlike Gromov hyperbolic spaces. To establish the asserted one-to-one correspondence, it remains to verify the functoriality 
of both constructions with respect to the appropriate classes of morphisms. 

\subsection{Property of $\mathcal{D}$}
We begin with the following result, which states that the quasisimilarity class of $X_\varepsilon$ depends only on the quasiisometry class of $X$, where $X$ is a complete roughly starlike Gromov hyperbolic space.

\begin{prop}\label{prop:quasiisometric to quasisimilar}
	$\mathcal{D}$ maps mutually quasiisometric complete roughly starlike Gromov hyperbolic spaces to mutually quasisimilar spaces.
\end{prop}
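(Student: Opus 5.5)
The plan follows the strategy of \cite[Proposition 4.37]{BHK}, with $(c,\mu)$-quasigeodesics and Theorem~\ref{thm:G-H} used wherever compactness is used there. Let $f\colon X\to Y$ be an $L$-quasiisometry between complete roughly starlike $\delta$-Gromov hyperbolic spaces, with base points $p\in X$ and $f(p)\in Y$, and fix $0<\varepsilon\le\varepsilon_1(\delta)$. We must show that $f\colon X_\varepsilon\to Y_\varepsilon$ is a quasisimilarity. By Theorem~\ref{thm:uniform}, both $X_\varepsilon$ and $Y_\varepsilon$ are bounded $A_0$-uniform. It therefore suffices to prove two things: the local bi-Lipschitz condition \eqref{eq:con2}, and $\eta$-quasisymmetry.

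\textbf{Step 1: boundary distance.} We first obtain the two-sided estimate
\[
d_\varepsilon(z)\asymp \frac{\rho_\varepsilon(z)}{\varepsilon}, \qquad z\in X.
\]
The lower bound is \eqref{4.3}. For the upper bound we use rough starlikeness: there is a road $\bar\alpha\colon p\curvearrowright\xi$ passing within $H$ of $z$. Following its arcs from a point near $z$ outward, and using Proposition~\ref{prop:boundary-id} to identify the end with a point of $\partial X_\varepsilon$, the $\rho_\varepsilon$-length of the tail is at most $C e^{\varepsilon H}\rho_\varepsilon(z)/\varepsilon$. This step needs the estimate $|p-u|\ge |p-z|+|u-z|-\text{const}$ along roads, the analogue of Lemma~\ref{quasigeodesic ieq}.

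\textbf{Step 2: local bi-Lipschitz condition.} Let $y,z\in B_{d_\varepsilon}(x,\lambda d_\varepsilon(x))$ with $\lambda$ small. Step 1 and \eqref{eq:inequality} force $|x-y|,|x-z|\le C(\delta)/\varepsilon$, so $\rho_\varepsilon$ is comparable to $\rho_\varepsilon(x)$ on the relevant region. Combining Theorem~\ref{thm:G-H} (applied to $(1,1)$-quasigeodesics) with the length property of $d$, we get $d_\varepsilon(y,z)\asymp\rho_\varepsilon(x)|y-z|$. The same holds in $Y$. Since $f$ is $L$-bi-Lipschitz and $|f(x)-f(p)|$ differs from $|x-p|$ by a factor at most $L$, \eqref{eq:con2} follows with $c_x=\rho_\varepsilon(f(x))/\rho_\varepsilon(x)$. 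The constants depend on $\varepsilon$ as well, consistent with the paper's footnote.

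\textbf{Step 3: quasisymmetry.} This is the main obstacle. From Step 1 and \eqref{eq:quasihyperbolic inequality}, \eqref{eq:j-k-upper}, the quasihyperbolic metric $k_\varepsilon$ of $X_\varepsilon$ satisfies $k_\varepsilon(x,y)\asymp \varepsilon|x-y|$ once $|x-y|\ge 1/\varepsilon$. On shorter scales we have $k_\varepsilon(x,y)\lesssim \varepsilon|x-y|+\text{const}$. Consequently $f\colon(X_\varepsilon,k_\varepsilon)\to(Y_\varepsilon,k_\varepsilon)$ is a rough quasiisometry. We then pass from this to quasisymmetry without local compactness. The first attempt is to invoke V\"ais\"al\"a's free-quasiworld theorem: a map between uniform spaces that is coarsely bi-Lipschitz in the quasihyperbolic metrics is quasim\"obius, and for bounded spaces with a normalized base point $d_\varepsilon(p)\asymp\operatorname{diam}$ it is then quasisymmetric. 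If citing that theorem is unsuitable, the fallback is a direct argument. We would establish the formula
\[
d_\varepsilon(x,y)\asymp \frac{1}{\varepsilon}\,e^{-\varepsilon (x|y)_p}\min\{1,\varepsilon|x-y|\},
\]
using Theorem~\ref{thm:G-H}, Lemma~\ref{quasigeodesic ieq} along a $(1,1)$-quasigeodesic, and Lemma~\ref{lem:vaisala-package}. We would then combine it with the rough control of Gromov products under $f$ available in roughly starlike hyperbolic spaces, via the Rips property (Lemma~\ref{thm:rips}) and stability (Lemma~\ref{thm:stability}). Together these would give $\eta$-control of the ratios in \eqref{eq:quasisymmetric}.
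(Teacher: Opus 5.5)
Your Steps 1 and 2 match the paper. Step 1 is Lemma~\ref{lem:varepsilon boundary distance}: a road passing near $z$, the $h$-shortness of its arcs, and Proposition~\ref{prop:boundary-id}. Step 2 is the local comparison $d_\varepsilon(y,z)\asymp\rho_\varepsilon(x)|y-z|$, which the paper reads off from \eqref{eq:twopoint} and \eqref{eq:varepsilon boundary distance}.

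The gap is in Step 3, which is the real content of the proposition. Your first route needs the statement that a quasiisometry of quasihyperbolic metrics between normalized bounded uniform spaces is quasim\"obius, hence quasisymmetric. V\"ais\"al\"a's theorem of this kind is for uniform domains in Banach spaces. $X_\varepsilon$ is an abstract incomplete length space, not an open subset of a Banach space, and no general version is available. In fact, within this paper that general statement is in effect a corollary of the proposition you are proving, combined with Theorem~\ref{thm:Gromov and roughly starlike} and Proposition~\ref{prop:quasisimilar}, so it cannot serve as an input. Your preliminary claim is fine, and easier than you suggest. Step 1 gives $\varepsilon/A_1\le\rho_\varepsilon/d_\varepsilon\le e\varepsilon$, and integrating along curves yields $k_\varepsilon(x,y)\asymp\varepsilon|x-y|$ at all scales (compare Proposition~\ref{prop:quasiisometry}). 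That is not where the difficulty lies.

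Your fallback names the right tool, the two-point estimate (Lemma~\ref{lem:two-point}), but ``together these would give $\eta$-control'' is exactly the missing argument. The paper supplies it as follows, writing $x'=f(x)$, $p'=f(p)$. Since $X_\varepsilon$ and $Y_{\varepsilon'}$ are quasiconvex (Theorem~\ref{thm:uniform}), \cite[Theorem~6.6]{Vaisala1999} reduces quasisymmetry to \emph{weak} quasisymmetry: $d_\varepsilon(x,y)\le d_\varepsilon(x,z)$ should imply $d_{\varepsilon'}(x',y')\le C\,d_{\varepsilon'}(x',z')$. This is then a finite case check with \eqref{eq:twopoint}:
\begin{itemize}
\item If $\varepsilon|x-z|$ is small, $y$ and $z$ lie in the local regime of Step 2.
\item If $\varepsilon|x-z|\gtrsim1>\varepsilon|x-y|$, the bound is immediate from \eqref{eq:twopoint}.
\item If both are $\gtrsim1$, \eqref{eq:twopoint} turns the hypothesis into $(x|z)_p\le(x|y)_p+C$, and one needs $f$ to preserve this rough \emph{inequality}.
\end{itemize}

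Your ``rough control of Gromov products'' has to be of the right kind. A quasiisometry only gives $L^{-1}(x|y)_p-C\le(x'|y')_{p'}\le L(x|y)_p+C$. That does not bound $e^{\varepsilon'((x'|z')_{p'}-(x'|y')_{p'})}$ when both products are large. What is needed is that the points of a short arc from $p$ to $x$ at distances $(x|y)_p$ and $(x|z)_p$ keep their order under $f$, up to an additive error. The paper takes this from the quasi-invariance of Gromov products, \cite[Theorem~3.21]{Vaisala]}, and rough starlikeness plays no role in it. Without the weak-to-full reduction, you would have to construct $\eta$ with $\eta(t)\to0$ directly, which your sketch does not address.

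Finally, allow independent parameters $\varepsilon\le\varepsilon_1(\delta)$ and $\varepsilon'\le\varepsilon_1(\delta')$, as the paper does; constants then depend on $\varepsilon'/\varepsilon$. The case $f=\mathrm{id}$, $\varepsilon\ne\varepsilon'$ is needed for $\mathcal{D}$ to be well defined on quasiisometry classes.
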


The proof of Proposition~\ref{prop:quasiisometric to quasisimilar} is similar to that used in \cite[Proposition~4.15]{BHK}. Accordingly, we first prove the following lemma.

\begin{lem}\label{lem:varepsilon boundary distance}
	Let $X$ be a complete $\delta$-Gromov hyperbolic and $(H, \nu, h)$-roughly starlike with respect to $w \in X$. Let $\varepsilon_1=\varepsilon_1(\delta)$ be given by Theorem~\textup{\ref{thm:uniform}}. Then for $0<\varepsilon\leq\varepsilon_1$ and $x\in X$, we have
	\beq\label{eq:varepsilon boundary distance}
	\frac{1}{\varepsilon e} \rho_\varepsilon(x) \leq d_\varepsilon(x) \leq A_1\, \frac{\rho_\varepsilon(x)}{\varepsilon},
	\eeq
	where $A_1=A_1(\delta, H, \nu, h)\geq1$.
\end{lem}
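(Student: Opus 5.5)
The lower bound in \eqref{eq:varepsilon boundary distance} is exactly \eqref{4.3}, established in the proof of Theorem~\ref{thm:uniform}. That argument uses only Proposition~\ref{prop:boundary-id} and \eqref{eq:inequality}, so it applies verbatim for $0<\varepsilon\le\varepsilon_1$. The work is therefore in the upper bound. The plan is to use rough starlikeness to send $x$ out to the boundary along a road and to integrate the density $\rho_\varepsilon$ along that road.

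Fix $x\in X$. By Definition~\ref{def:roughly starlike}, there is a $(\nu,h)$-road $\bar\alpha:w\curvearrowright\xi\in\partial_GX$, with arcs $\alpha_i:w\curvearrowright y_i$, and a point $x_0\in\alpha_{i_0}$ for some $i_0$ with $|x-x_0|\le H+1$. Let $g_{i_0 i}$ be the length maps from Definition~\ref{def:road}\eqref{road3}. Then for $i\ge i_0$ the points $x_i=g_{i_0i}(x_0)\in\alpha_i$ satisfy $|x_i-x_0|\le\nu$. Each $\alpha_i$ is $h$-short and starts at $w$. Hence for $u\in\alpha_i[x_i,y_i]$ we get
\[
|w-u|\ge \ell_d(\alpha_i[w,u])-h=\ell_d(\alpha_i[w,x_i])+\ell_d(\alpha_i[x_i,u])-h\ge |w-x_i|+\ell_d(\alpha_i[x_i,u])-h .
\]
Here the base point $p$ of $\rho_\varepsilon$ is taken to be $w$. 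Changing base point only multiplies $\rho_\varepsilon$ by a factor in $[e^{-\varepsilon|p-w|},e^{\varepsilon|p-w|}]$ uniformly. This affects both sides of \eqref{eq:varepsilon boundary distance} by the same factor, so the estimate is unaffected.

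With this bound, integrating along $\alpha_i[x_i,y_i]$ gives
\[
\ell_\varepsilon(\alpha_i[x_i,y_i])\le e^{\varepsilon h}\rho_\varepsilon(x_i)\int_0^\infty e^{-\varepsilon t}\,dt=\frac{e^{\varepsilon h}}{\varepsilon}\rho_\varepsilon(x_i)\le \frac{e^{\varepsilon(h+\nu+H+1)}}{\varepsilon}\rho_\varepsilon(x).
\]
The last step uses \eqref{eq:inequality} together with $|x_i-x|\le \nu+H+1$. We also need $d_\varepsilon(x,x_i)\le C(\varepsilon,\nu,H)\rho_\varepsilon(x)/\varepsilon$. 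For this, join $x$ to $x_i$ by a $1$-short arc, which has length at most $\nu+H+2$, and use \eqref{eq:inequality} again. This gives $d_\varepsilon(x,x_i)\le(\nu+H+2)e^{\varepsilon(\nu+H+2)}\rho_\varepsilon(x)$. Since $\varepsilon\le\varepsilon_1(\delta)$, this is at most $C\rho_\varepsilon(x)/\varepsilon$ with $C$ depending on $\delta,\nu,H$.

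The remaining step, and the main obstacle, is to connect the points $y_i$ to a boundary point of $X_\varepsilon$. The lengths $\ell_d(\alpha_i)$ tend to infinity and the $y_i$ form a Gromov sequence representing $\xi$. By Proposition~\ref{prop:boundary-id}, the point $\Phi(\xi)\in\partial X_\varepsilon$ is the $d_\varepsilon$-limit of $y_i$. This is exactly the identification of the Gromov boundary with the metric boundary, so $d_\varepsilon(y_i,\Phi(\xi))\to0$. Hence
\[
d_\varepsilon(x)\le d_\varepsilon(x,\Phi(\xi))\le \limsup_i\big(d_\varepsilon(x,x_i)+\ell_\varepsilon(\alpha_i[x_i,y_i])+d_\varepsilon(y_i,\Phi(\xi))\big)\le A_1\frac{\rho_\varepsilon(x)}{\varepsilon},
\]
with $A_1$ depending only on $\delta,H,\nu,h$. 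If Proposition~\ref{prop:boundary-id} does not directly give $y_i\to\Phi(\xi)$, I would argue by hand instead. The $y_i$ are $d_\varepsilon$-Cauchy: for $j>i$, the tails $\alpha_j[g_{ij}y_i,y_j]$ have $\varepsilon$-length at most $e^{\varepsilon h}\rho_\varepsilon(y_i)/\varepsilon\to0$, and $|g_{ij}y_i-y_i|\le\nu$ controls the remaining gap. The $y_i$ do not converge in $X_\varepsilon$ because $|w-y_i|\to\infty$. Their limit therefore lies in $\partial X_\varepsilon$, which is all that the estimate requires.
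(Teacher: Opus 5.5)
Your proof is correct and is essentially the paper's argument: the lower bound is quoted from \eqref{4.3}, and the upper bound uses a road point within $H+1$ of $x$, transported by the length maps $g_{ij}$, the $h$-shortness bound $|w-u|\ge |w-x_i|+\ell_d(\alpha_i[x_i,u])-h$ integrated along the tail of $\alpha_i$, a short hop from $x$, and Proposition~\ref{prop:boundary-id} to pass to a point of $\partial X_\varepsilon$ (your direct Cauchy-sequence fallback is a valid way to avoid that proposition). The one inaccuracy is your base-point remark: changing $p$ rescales the two sides of \eqref{eq:varepsilon boundary distance} by possibly different factors in $[e^{-\varepsilon|p-w|},e^{\varepsilon|p-w|}]$, so $A_1$ would acquire a factor $e^{2\varepsilon|p-w|}$ that the data do not control; you should simply take $p=w$, as the paper tacitly does.
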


\begin{proof}
	The left inequality was already proved in \eqref{4.3} and we only need to prove the right inequality.
	
	For $x\in X$, since $X$ is $(H, \nu, h)$-roughly starlike, there exists  a $(\nu, h)$-road $\bar{\alpha}: w \curvearrowright a \in \partial_G X$ so that $d(x, |\bar{\alpha}|) \leq H$, where $a=\hat{b}=[\{b_n\}]$. Thus there are $h$-short arcs
	$\alpha_i: w \curvearrowright b_i$ and $y_i \in \alpha_i$ such that  $|x - y_i| \leq H+1$. Let $y_j=g_{ij}y_i$ for all $j \geq i$. Then by Definition~\ref{def:road}\,\eqref{road3}, it holds 
	\beq\label{5.10}
	|x - y_j|\leq |x - y_i| + |y_j - y_i| \leq H + \nu+1.
	\eeq
	
	Since $X$ is a length space, for any $\varsigma>0$, we can take $\beta_j: x \curvearrowright y_j$ in $X$ so that 
	$$\ell_d(\beta_j)\leq |x - y_j|+\varsigma\leq H + \nu+\varsigma+1.$$
	By \eqref{5.10}, we have
	$$d_\varepsilon(x, y_j)\leq \int_{\beta_j} \rho_\varepsilon(u) \, ds\leq \rho_\varepsilon(x) \int_0^{H + \nu+\varsigma+1} e^{\varepsilon t}dt\leq\frac{1}{\varepsilon}\rho_\varepsilon(x)(e^{\varepsilon(H+ \nu+\varsigma+1)}-1).$$
	Sending $\varsigma$ to $0$ gives  
	$$d_\varepsilon(x, y_j)\leq\frac{1}{\varepsilon}\rho_\varepsilon(x)(e^{\varepsilon(H+ \nu+1)}-1).$$
	
	Next, we estimate $d_\varepsilon(y_j, b_j)$. 
	Since $g_{ij}$ is a length map, for any $u\in\alpha_j[y_j, b_j]$,
	\beqq
	\begin{aligned}
		|u - w| &\geq \ell_d({\alpha_j[w, y_j]} )+ \ell_d(\alpha_j[y_j, u]) - h \\
		&\geq |w - y_i|+ \ell_d(\alpha_j[y_j, u]) - h\\
		&\geq |x - w| + \ell_d(\alpha_j[y_j, u]) - H- h-1.
	\end{aligned}
	\eeqq
	Then
	\beqq
	d_\varepsilon(y_j, b_j)\leq \int_{\alpha_j[y_j, b_j]} \rho_\varepsilon(u) \, ds \leq \rho_\varepsilon(x) e^{\varepsilon(H+h+1)} \int_0^\infty e^{-\varepsilon t} =\frac{1}{\varepsilon} \rho_\varepsilon(x) e^{\varepsilon(H+h+1)}.
	\eeqq
	Combining the above two inequalities, we infer that
	\beqq
	d_\varepsilon(x, b_j) \leq d_\varepsilon(x, y_j) + d_\varepsilon(y_j, b_j)\leq \frac{e^{\varepsilon(H+ \nu+1)}+e^{\varepsilon(H+ h+1)} - 1}{\varepsilon} \rho_\varepsilon(x).
	\eeqq
	
	As $\{b_j\}$ is a Gromov sequence, we know from Proposition~\ref{prop:boundary-id} that $b_j\stackrel{d_\varepsilon}{\longrightarrow}\zeta\in \partial X_\varepsilon$ as $j \to \infty$. Thus
	\[
	d_\varepsilon(x) \leq d_\varepsilon(x, \zeta)\leq \frac{e^{\varepsilon(H+ \nu+1)}+e^{\varepsilon(H+ h+1)} - 1}{\varepsilon} \rho_\varepsilon(x).
	\]
	Since $0 < \varepsilon \leq \varepsilon_1(\delta)$, Lemma~\ref{lem:varepsilon boundary distance} follows by setting $A_1=e^{\varepsilon_1(H+ \nu+1)}+e^{\varepsilon_1(H+ h+1)} - 1$.
\end{proof}



\begin{proof}[Proof of Proposition~\textup{\ref{prop:quasiisometric to quasisimilar}}]
	With Lemma \ref{lem:varepsilon boundary distance} at hand, the proof of Proposition~\textup{\ref{prop:quasiisometric to quasisimilar}} follows by a similar argument as that used in \cite[Proposition~4.15]{BHK}. Thus, we only outline the main steps below.
	
	Let $f \colon X \to X'$ be a quasiisometry between two complete roughly starlike Gromov hyperbolic spaces, with $X$ $\delta$-hyperbolic and $X'$ $\delta'$-hyperbolic. For  $0<\varepsilon\leq\varepsilon_1(\delta)$ and $0<\varepsilon'\leq\varepsilon_1(\delta')$ as in Theorem~\textup{\ref{thm:uniform}}, let $X_\varepsilon = (X, d_\varepsilon)$ and $X'_{\varepsilon'} = (X', d_{\varepsilon'})$ be the corresponding uniformized spaces. Choosing base points $w \in X$ and $w' \in X'$ with $f(w) = w'$, which is harmless up to quasiisometry. We need to prove that $f \colon X_\varepsilon \to X'_{\varepsilon'}$ is a quasisimilarity.
	
	Fix $z \in X$ and a sufficiently small $\lambda$ depending only on the data and on the ratio $\varepsilon'/\varepsilon$. For $x, y \in B_\varepsilon(z, \lambda d_\varepsilon(z))$, by \eqref{eq:twopoint} and \eqref{eq:varepsilon boundary distance}, we obtain
	\begin{equation}\label{eq:locally quasisimilarity 1}
		\frac{1}{C}\,d_\varepsilon(x,y) \;\le\; d_\varepsilon(z)\,\varepsilon|x-y| \;\le\; C\,d_\varepsilon(x,y),
	\end{equation}
	and
	\begin{equation}\label{eq:locally quasisimilarity 2}
		\frac{1}{C}\,d_{\varepsilon'}(x',y') \;\le\; d_{\varepsilon'}(z')\,\varepsilon'| x'-y'| \;\le\; C\,d_{\varepsilon'}(x',y'),
	\end{equation}
where $C\geq1$ depends only on the data associated with $X$ and $X'$, not necessarily the same at each occurrence. As $f \colon X \to X'$ is quasiisometric, the above two estimates imply that $f$ satisfies condition \eqref{eq:con2} for quasisimilarity. It remains to prove that $f$ is $\eta$-quasisymmetric.
	
	Since $X_\varepsilon$ and $X'_{\varepsilon'}$ are quasiconvex by Theorem~\ref{thm:uniform}, it suffices by \cite[Theorem~6.6]{Vaisala1999} to prove that for all distinct $x, y, z \in X$,
	\begin{equation}\label{eq:weakly quasisymmetric}
		\frac{d_\varepsilon(x,y)}{d_\varepsilon(x,z)} \le 1
\quad\text{implies}\quad		
		\frac{d_{\varepsilon'}(x',y')}{d_{\varepsilon'}(x',z')} \le C.
	\end{equation}
	Under the condition $d_\varepsilon(x,y) \le d_\varepsilon(x,z)$, \eqref{eq:twopoint} gives
	\begin{equation}\label{eq:quasisymmetric con}
		\exp\bigl\{\varepsilon(x|z)_w - \varepsilon(x|y)_w\bigr\}\,
		\frac{1 \wedge \varepsilon|x-y|}{1 \wedge \varepsilon|x-z|} \le C.
	\end{equation}
	
	For sufficiently small $\varepsilon|x-z|$, as in \eqref{eq:locally quasisimilarity 1}, we obtain 
	\[
	\frac{d_\varepsilon(x,y)}{d_\varepsilon(x)}\le
	\frac{d_\varepsilon(x,z)}{d_\varepsilon(x)}\le C\,\varepsilon|x-z|, \quad
	\frac{d_\varepsilon(x,z)}{d_\varepsilon(x)}\geq \frac 1C\,\varepsilon|x-z|.
	\] 
	Together with \eqref{eq:con2}, \eqref{eq:weakly quasisymmetric} follows. 
	
	In the remaining case where $C\varepsilon|x-z| \ge 1$, if $\varepsilon|x-y| < 1$, then \eqref{eq:weakly quasisymmetric} is an immediate consequence of \eqref{eq:twopoint}.
	If instead $\varepsilon|x-y| \ge 1$, then \eqref{eq:quasisymmetric con} implies $(x|z)_w \le (x|y)_w + C$. By the quasi-invariance of the Gromov product under quasiisometries (see \cite[Theorem~3.21]{Vaisala}), the image points satisfy
	\[
	C \le \varepsilon'|x'-z'|
	\quad\text{and}\quad
	(x'|z')_{w'} \le (x'|y')_{w'} + C.
	\]
	Consequently, by \eqref{eq:twopoint},
	\[
	\frac{d_{\varepsilon'}(x',y')}{d_{\varepsilon'}(x',z')}
	\le C\,\frac{1 \wedge \varepsilon'|x'-y'|}{1 \wedge \varepsilon'|x'-z'|}
	\le C,
	\]
	so \eqref{eq:weakly quasisymmetric} holds in this case as well.

\end{proof}

\subsection{Quasisimilarity of conformal deformation}
We now study the following composition a bounded $A$-uniform space $\Omega$:
$$\Omega \xrightarrow{\quad\mathcal{Q}\quad} (\Omega, k) \xrightarrow{\quad\mathcal{D}\quad} \Omega_\varepsilon.$$
Here we denote by $\Omega_\varepsilon$ the uniformization of $(\Omega, k)$ via the density $\rho_\varepsilon(x) = \exp\left(-\varepsilon k(w, x)\right)$, where $0 < \varepsilon \leq \varepsilon_1(A)$ and $w \in \Omega$ is a base point satisfying $d_\Omega(w) \ge \frac12\sup\limits_{z\in\Omega}d_\Omega(z) $.

\begin{prop}\label{prop:quasisimilar}
	The identity map $\Omega \to \Omega_\varepsilon$, $0 < \varepsilon \leq \varepsilon_1(A)$, is quasisimilar with constant depending only on $A$ and $\varepsilon$.
\end{prop}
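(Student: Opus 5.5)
The plan follows \cite[Proposition 4.28]{BHK}, now done with the tools of Sections \ref{sec:uniformity of quasigeodesic} and \ref{sec:uniform spaces are Gromov hyperbolic}. By Theorem~\ref{thm:Gromov and roughly starlike} and Theorem~\ref{thm:roughly starlike}, $(\Omega,k)$ is a complete $\delta_1(A)$-hyperbolic space. It is $(H,\nu,h)$-roughly starlike with respect to the deep point $w$, with data depending only on $A$ (fix $h=1$). Hence Lemma~\ref{lem:varepsilon boundary distance} applies to $X=(\Omega,k)$ and gives
\[
d_\varepsilon(x)\asymp_{A}\frac{1}{\varepsilon}e^{-\varepsilon k(w,x)} \qquad\text{for } x\in\Omega.
\]

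\textbf{Local bi-Lipschitz condition.} First I will show that on each ball $B_d(x,\lambda d_\Omega(x))$ with $\lambda=1/2$, the identity is bi-Lipschitz with factor $c_x=e^{-\varepsilon k(w,x)}/d_\Omega(x)$.
\begin{itemize}
\item For $z,y$ in this ball, $k(z,x)\le 2$ by the standard estimate \eqref{eq:quasihyperbolic inequality}. So $\rho_\varepsilon\asymp e^{-\varepsilon k(w,x)}$ there, with constant $e^{2\varepsilon}$.
\item In the same ball, $d_\Omega\asymp d_\Omega(x)$, so $ds_k\asymp ds_d/d_\Omega(x)$.
\item Upper bound: the segment of a near-geodesic in $d$ (a quasiconvex curve) stays in a slightly larger ball when $\lambda$ is small. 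Integrating $\rho_\varepsilon\,ds_k$ along it gives $d_\varepsilon(z,y)\lesssim c_x\,d(z,y)$.
\item Lower bound: any curve from $z$ to $y$ either stays in $B_d(x,\frac34 d_\Omega(x))$, where the same density comparison applies, or exits it. In the latter case its $\varepsilon$-length is at least $\gtrsim c_x d_\Omega(x)\ge c_x d(z,y)$.
\end{itemize}
Thus \eqref{eq:con2} holds with $L=L(A,\varepsilon)$.

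\textbf{Quasisymmetry.} Since both spaces are quasiconvex, \cite[Theorem~6.6]{Vaisala1999} reduces quasisymmetry to the weak version: $d(x,y)\le d(x,z)$ should imply $d_\varepsilon(x,y)\le C\,d_\varepsilon(x,z)$. Both sides are controlled by uniform curves.
\begin{itemize}
\item For any $u,v$, take $\gamma\in\Lambda^{1,1}_{u,v}(\Omega,k)$. By Theorem~\ref{thm:intrinsic-210} it is $B_0'$-uniform in $d$, and by Theorem~\ref{thm:uniform} (applied to $(\Omega,k)$) it is $A_0$-uniform in $d_\varepsilon$.
\item Choose a midpoint $m$ of $\gamma$ in $d$-length. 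Then $d(u,v)\asymp d_\Omega(m)$ whenever $d(u,v)\gtrsim d_\Omega(u)\wedge d_\Omega(v)$, and similarly $d_\varepsilon(u,v)\asymp d_\varepsilon(m)\asymp \varepsilon^{-1}e^{-\varepsilon k(w,m)}$. In the complementary, local regime the previous step already gives comparability.
\item This yields the two-point formula
\[
d_\varepsilon(u,v)\asymp \frac1\varepsilon\, e^{-\varepsilon k(w,m_{uv})}\,\Big(1\wedge \frac{d(u,v)}{d_\Omega(u)\wedge d_\Omega(v)}\Big)\ \text{roughly},
\]
together with a relation $k(w,m)\approx\log\frac{d_\Omega(w)}{d_\Omega(m)}$ up to additive constants depending on $A$. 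This relation follows from \eqref{eq:j-k-upper} and \eqref{eq:quasihyperbolic inequality}, using that $w$ is deep and $\Omega$ is bounded.
\item Consequently $d_\varepsilon(u,v)\asymp_{A,\varepsilon} d(u,v)^{\varepsilon}$-type expressions: in the far regime, $d_\varepsilon(u,v)\asymp \varepsilon^{-1}(d(u,v)/d_\Omega(w))^{\varepsilon}$, and in the near regime the local bi-Lipschitz formula holds.
\item Comparing the two ratios $d_\varepsilon(x,y)/d_\varepsilon(x,z)$ and $d(x,y)/d(x,z)$ case by case (near/near, near/far, far/far relative to $d_\Omega(x)$) gives a bound $C(A,\varepsilon)$. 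In the mixed case, monotonicity of the power function together with $d(x,y)\le d(x,z)$ is used.
\end{itemize}

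\textbf{Main obstacle.} I expect the hardest step to be establishing the comparison $d_\varepsilon(u,v)\asymp\varepsilon^{-1}\rho_\varepsilon(m)$ uniformly, without proper-space compactness. In particular, the lower bound on $d_\varepsilon(u,v)$ requires controlling arbitrary curves. For this I would rely on the Gehring–Hayman inequality, Theorem~\ref{thm:G-H}: $d_\varepsilon(u,v)\asymp\ell_\varepsilon(\gamma)$. Then I would estimate $\ell_\varepsilon(\gamma)$ directly, using the double cone structure and the identity $\ell_\varepsilon(\gamma[\cdot])\ge\rho_\varepsilon(m)\cdot(\text{$k$-length near } m)$, with constants inevitably depending on $\varepsilon$ (cf. Example~\ref{exam:counter-example}).
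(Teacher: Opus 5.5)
Your local bi-Lipschitz step is fine, and it is more hands-on than the paper's route, which gets it from Lemma~\ref{lem:two-point} together with $k(y,z)\asymp d(y,z)/d_\Omega(x)$. Two small fixes there. The bound $k(z,x)\le C(A)$ on the ball comes from \eqref{eq:j-k-upper} (it gives $4A^2\log 2$ for $\lambda=\tfrac12$), not from \eqref{eq:quasihyperbolic inequality}, which is a lower bound. Also, $\lambda$ must be small depending on $A$ for your upper-bound curve to stay in the ball.

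The genuine gap is in the quasisymmetry part. The relation $k(w,m)=\log\frac{d_\Omega(w)}{d_\Omega(m)}+O(1)$ does not follow from \eqref{eq:j-k-upper} and \eqref{eq:quasihyperbolic inequality}; they give at most
\[
\log\frac{d_\Omega(w)}{d_\Omega(m)}-C\le k(w,m)\le 4A^2\log\frac{d_\Omega(w)}{d_\Omega(m)}+C .
\]
The multiplicative factor cannot be removed. In the square $Q=(-1,1)^2$ take $w=0$ and $m_t=(1-t,1-t)$. Then $d_Q(z)\le |z-(1,1)|/\sqrt2$, so every curve from $w$ to $m_t$ has quasihyperbolic length at least $\sqrt2\log(1/t)$. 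By contrast, $\log(d_Q(w)/d_Q(m_t))=\log(1/t)$. Hence your power law $d_\varepsilon(u,v)\asymp\varepsilon^{-1}(d(u,v)/d_\Omega(w))^{\varepsilon}$ is false: near the corner the exponent is effectively $\sqrt2\,\varepsilon$. Your far/far and mixed comparisons, which rest on monotonicity of $s\mapsto s^{\varepsilon}$, therefore have no support. From the two-sided bound above you only get $k(w,m_{xy})\ge\frac1{4A^2}k(w,m_{xz})-C$. That yields a ratio bound of order $\exp\bigl(\varepsilon(1-\tfrac1{4A^2})k(w,m_{xz})\bigr)$, which is unbounded.

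The missing idea is to compare $(x|y)_w$ with $(x|z)_w$ along one fixed quasigeodesic from $x$ to $w$, rather than through $d_\Omega$ alone. This is exactly Lemma~\ref{lem:intrinsic-314}. For $\alpha\in\Lambda^{1,1}_{a,w}(\Omega,k)$ and $y_b=\alpha(d(a,b))$ (or $w$), one has $(a|b)_w=k(w,y_b)+O(1)$. The hypothesis $d(a,b)\le d(a,c)$ forces $y_b$ to precede $y_c$ on $\alpha$, so the quasigeodesic property gives $(a|b)_w\ge(a|c)_w+k(y_b,y_c)-C$. Feed this into the two-point estimate, Lemma~\ref{lem:two-point}, which is already proved in the appendix; you do not need to rederive it via Gehring--Hayman, so your stated ``main obstacle'' is not where the difficulty lies. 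This gives
\[
\frac{d_\varepsilon(a,b)}{d_\varepsilon(a,c)}\le C\,\frac{1\wedge\varepsilon k(a,b)}{1\wedge\varepsilon k(a,c)} .
\]
Then split into two cases. If $k(a,c)\le c_0(A)$, both $b$ and $c$ lie in the local ball, and your bi-Lipschitz step handles it. If $k(a,c)>c_0$, the ratio is at most $C/(1\wedge\varepsilon c_0)$. Your mixed case, phrased with Gromov products, is in fact immediate from $(x|z)_w\le k(w,x)$; it breaks only because you route it through $d_\Omega(m_{xz})$.
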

We shall need the following auxiliary lemma for the proof of Proposition~\ref{prop:quasisimilar}.

\begin{lem}
	\label{lem:intrinsic-314}
	Let $(\Omega,d)$ be a bounded $A$-uniform space and $w\in\Omega$ satisfy \eqref{eq:deep-basepoint-road}.
	For $\mu>0$, let $\alpha\in \Lambda^{1,\mu}_{a,w}(\Omega,k)$
 be parameterized by $d$-arclength from $a$. For $b\in\Omega\setminus\{a\}$, define
	\[
	y
	=
	\begin{cases}
		\alpha\bigl(d(a,b)\bigr),
		& d(a,b)\le \dfrac12 d(a,w),\\[2mm]
		w,
		& d(a,b)>\dfrac12 d(a,w).
	\end{cases}
	\]
	Then there exist constants $C_1,C_2>0$, depending only on $A$ and $\mu$, such that, for every
	$\beta\in\Lambda^{1,\mu}_{a,b}(\Omega,k),$
	one has
	\begin{equation}\label{eq:314-a}
		k\bigl(y,\beta\bigr)
		\le
		C_1
	\end{equation}
	and
	\begin{equation}\label{eq:314-b}
		k(w,\beta)-C_2
		\le
		k\bigl(w,y\bigr)
		\le
		k(w,\beta)+C_2.
	\end{equation}
\end{lem}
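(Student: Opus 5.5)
The plan is to work with $(1,\mu)$-quasigeodesics in $(\Omega,k)$ and use Theorem~\ref{thm:intrinsic-210}. By that theorem, $\alpha$, every $\beta\in\Lambda^{1,\mu}_{a,b}(\Omega,k)$, and an auxiliary $\gamma\in\Lambda^{1,\mu}_{b,w}(\Omega,k)$ are all $B_0$-uniform curves in $(\Omega,d)$, with $B_0=B(A,1,\mu)$. Every bound on $k$ will come from \eqref{eq:j-k-upper}. So in each step I only need to exhibit two points whose $d$-distance is comparable to the smaller of their boundary distances. The one extra ingredient is that a bounded $A$-uniform space satisfies $\operatorname{diam}\Omega\le C(A)\,d_\Omega(w)$. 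To see this, join two almost diametral points by an $A$-uniform curve; the double cone condition at its $d$-midpoint gives boundary distance $\gtrsim \operatorname{diam}\Omega/A$ there, and then \eqref{eq:deep-basepoint-road} applies.

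\textbf{Case $d(a,b)>\frac12 d(a,w)$, so $y=w$.} Let $z$ be the $d$-midpoint of $\beta$. The double cone condition gives $d_\Omega(z)\ge \ell_d(\beta)/(2B_0)\ge d(a,b)/(2B_0)$. We also have $d(z,w)\le d(z,a)+d(a,w)\le (B_0+2)\,d(a,b)$. Moreover $d(a,b)\le \operatorname{diam}\Omega\lesssim d_\Omega(w)$. Hence the ratio $d(z,w)/(d_\Omega(z)\wedge d_\Omega(w))$ is bounded by a constant depending only on $A$ and $\mu$. Then \eqref{eq:j-k-upper} gives $k(w,\beta)\le k(w,z)\le C_1$, which is \eqref{eq:314-a}. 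Since $k(w,y)=0$, \eqref{eq:314-b} follows once $C_2\ge C_1$.

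\textbf{Case $d(a,b)\le\frac12 d(a,w)$.} Put $t=d(a,b)$. Then $t\le\frac12\ell_d(\alpha)$, so the double cone condition gives $d_\Omega(y)\ge t/B_0$. With $z$ again the $d$-midpoint of $\beta$, we have $d_\Omega(z)\ge t/(2B_0)$ and $d(y,z)\le t+B_0t$. Then \eqref{eq:j-k-upper} bounds $k(y,z)$, which proves \eqref{eq:314-a}. The triangle inequality $k(w,\beta)\le k(w,y)+k(y,\beta)$ then yields the left inequality of \eqref{eq:314-b}.

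The right inequality $k(w,y)\le k(w,\beta)+C_2$ is the main point. I will route it through Gromov products. By \eqref{eq:gp-curve-comparison}, $k(w,\beta)\ge (a|b)_w-\mu/2$, so it suffices to bound $(a|b)_w$ from below by $k(w,y)-C$. Since $\alpha$ is a $(1,\mu)$-quasigeodesic through $y$, we have $k(w,a)\ge k(w,y)+k(y,a)-\mu$. Also $k(a,b)\le k(a,y)+k(y,b)$. Substituting these gives
\[
2(a|b)_w\ \ge\ k(w,y)-\mu+\bigl(k(w,b)-k(y,b)\bigr)=2k(w,y)-\mu-2(w|b)_y .
\]
Hence it remains to bound $(w|b)_y$. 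By the upper bound in \eqref{eq:gp-curve-comparison} with base point $y$, we get $(w|b)_y\le k(y,\gamma)+\mu/2$. We know $\ell_d(\gamma)\ge d(b,w)\ge d(a,w)-t\ge t$. So the point $v\in\gamma$ with $\ell_d(\gamma[b,v])=t/2$ satisfies $d_\Omega(v)\ge t/(2B_0)$ and $d(v,y)\le t/2+d(b,a)+d(a,y)\le 3t$. Together with $d_\Omega(y)\ge t/B_0$, \eqref{eq:j-k-upper} bounds $k(y,v)$, so $k(y,\gamma)\le C(A,\mu)$. This closes the estimate.
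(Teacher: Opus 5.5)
Your proof is correct, and for the right-hand inequality of \eqref{eq:314-b} it takes a genuinely different route from the paper.

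The first case and \eqref{eq:314-a} essentially match the paper. There is one small difference. The paper takes $x\in\beta$ with $\ell_d(\beta[a,x])=\frac14 d(a,w)$ and gets $d_\Omega(w)\gtrsim d(a,w)$ by applying \eqref{eq:deep-basepoint-road} at that point. Your detour through $\operatorname{diam}\Omega\le C(A)\,d_\Omega(w)$ is valid but not needed, since \eqref{eq:deep-basepoint-road} at your midpoint already gives $d_\Omega(w)\ge\frac12 d_\Omega(z)\ge d(a,b)/(4B_0)$.

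For the right-hand inequality, the paper works with a point $u\in\beta$ realizing $k(w,\beta)$ and splits into cases by the position of $u$ on $\beta$:
\begin{itemize}
\item If $u$ lies in the $d$-middle of $\beta$, then $k(u,y)$ is bounded directly.
\item If $u$ lies within $\frac12 d(a,b)$ of an endpoint, the paper draws an auxiliary $(1,\mu)$-quasigeodesic $\beta_0\colon u\curvearrowright w$. It takes $u'\in\beta_0$ at $d$-length $\frac14 d(a,b)$ from $u$, bounds $k(u',y)$, and uses $k(w,u')\le\ell_k(\beta_0)\le k(w,u)+\mu$.
\end{itemize}

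You instead argue through Gromov products:
\begin{itemize}
\item The quasigeodesic property of $\alpha$ at $y$ together with the triangle inequality gives the reduction $(a|b)_w\ge k(w,y)-\mu/2-(w|b)_y$.
\item The elementary short-curve bound $(w|b)_y\le k(y,\gamma)+\mu/2$, for a single auxiliary $\gamma\colon b\curvearrowright w$, bounds the error term. The net constant is $\frac{3\mu}{2}+4A^2\log(1+6B_0)$.
\end{itemize}

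What your route buys: it never locates a nearest point on $\beta$ and needs no case split. It uses only the elementary upper half of \eqref{eq:gp-curve-comparison}, which holds for any $\mu$-short curve, so no hyperbolicity is used. It also directly yields $(a|b)_w\ge k(w,y)-C$, which is exactly what the paper later extracts in \eqref{6.15}. The paper's route stays at the level of distances between points and curves and is more hands-on, at the cost of the extra case analysis.
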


\begin{proof} 
	
	Observe that if \eqref{eq:314-a} holds, the left-hand inequality of \eqref{eq:314-b} can be directly derived from the triangle inequality. So it suffices to prove \eqref{eq:314-a} and the right-hand inequality of \eqref{eq:314-b}.
	
	Assume first that
	$
	d(a,b)>\frac12d(a,w).
	$
	Then $y=w$. For $\beta\in\Lambda^{1,\mu}_{a,b}(\Omega,k),$ choose $x\in\beta$ so that
	\[
	\ell_d(\beta[a,x])=\frac14d(a,w).
	\]
	Since $\beta$ is a $B_0$-uniform curve by Theorem~\ref{thm:intrinsic-210} and $\ell_d(\beta)\ge d(a,b)>\frac12d(a,w),$
	we have
	\begin{equation*}
		d_\Omega(x)
		\ge \frac{1}{B_0}\ell_d\left(\beta[a,x]\right)\wedge \ell_d\left(\beta[x,b]\right)=
		\frac{1}{4B_0}d(a,w).
	\end{equation*}
	The base-point condition \eqref{eq:deep-basepoint-road} gives
	\[
	d_\Omega(x)
	\le
	\sup_{z\in\Omega}d_\Omega(z)
	\le
	2d_\Omega(w),
	\]
	and thus
	\begin{equation*}
		d_\Omega(w)
		\ge
		\frac{1}{8B_0}d(a,w).
	\end{equation*}
 Note also that 
	\[
	d(w,x)
	\le
	d(w,a)+d(a,x)
	\le
	\frac54d(a,w).
	\]
	By \eqref{eq:j-k-upper} and the above estimate, we obtain
	\[
	\begin{aligned}
		k(w,\beta)
		&\le k(w,x)\le
		4A^2\log\left(
		1+
		\frac{d(w,x)}
		{d_\Omega(w)\wedge d_\Omega(x)}
		\right)\le
		4A^2\log(1+10B_0).
	\end{aligned}
	\]
	Since $y=w$, both \eqref{eq:314-a} and \eqref{eq:314-b} follow in this case.
	
	Assume now that
	$
	d(a,b)\le\frac12d(a,w),
	$
	and $y=\alpha(d(a,b))$. Since $\alpha$ is parameterized by $d$-arc length, this point is well-defined. Moreover,
	\beq\label{eq:314-length}
	\ell_d(\alpha[a,y])=d(a,b),
	\quad
	\ell_d(\alpha[y,w])
	\ge d(a,w)-d(a,b)
	\ge d(a,b).
	\eeq
	Since $\alpha$ is $B_0$-uniform, we have
	\begin{equation}\label{eq:314-deltaq}
		d_\Omega(y)
		\ge \frac{1}{B_0}\ell_d\left(\alpha[a,y]\right)\wedge \ell_d\left(\alpha[y,w]\right)=
		\frac{1}{B_0}d(a,b).
	\end{equation}
    For $\beta\in\Lambda^{1,\mu}_{a,b}(\Omega,k),$	choose $x\in\beta$ bisecting the $d$-length. Since $\ell_d(\beta)\ge d(a,b)$ and $\beta$ is  $B_0$-uniform  by Theorem~\ref{thm:intrinsic-210}, it holds 
	\begin{equation*}
		d_\Omega(x)
		\ge \frac{1}{B_0}\ell_d\left(\beta[a,x]\right)\wedge \ell_d\left(\beta[x,b]\right)=
		\frac{1}{2B_0}d(a,b).
	\end{equation*}
	Since $\beta$ is $B_0$-quasiconvex in $(\Omega,d)$, we have 
	\[
	d(x,y)
	\le
	d(x,a)+d(a,y)
	\le
	\frac12\ell_d(\beta)+d(a,b)
	\le
	\left(\frac{B_0}{2}+1\right)d(a,b).
	\]
	These imply
	\[
	k(y,\beta)
	\le
	k(y,x)
	\stackrel{\eqref{eq:j-k-upper}}{\le}
	4A^2\log\bigl(1+B_0(B_0+2)\bigr).
	\]
	This proves \eqref{eq:314-a}, 
	and thus also the left-hand inequality in \eqref{eq:314-b}.
	
	For the right-hand inequality in \eqref{eq:314-b}, choose $u\in\beta$ so that $k(w,u)=k(w,\beta).$
	
	If
$
	\ell_d(\beta[a,u])
	\wedge
	\ell_d(\beta[u,b])
	\ge
	\frac12d(a,b),
$
	then $$d_\Omega(u)\ge\frac{1}{2B_0}d(a,b),$$
	and
	\[  
	d(u,y)
	\le d(u,a)+d(a,y)\leq\ell_d(\beta)+d(a,b)\leq
	(B_0+1)d(a,b).
	\]
	Then it follows from \eqref{eq:j-k-upper} and \eqref{eq:314-deltaq} that
	\[
	k(w,y)
	\le
	k(w,u)+k(u,y)
	\le
	k(w,\beta)+4A^2\log\bigl(1+2B_0(B_0+1)\bigr).
	\]
	
	Next, we consider the case
	\[
	\ell_d(\beta[a,u])
	\wedge
	\ell_d(\beta[u,b])
	<
	\frac12d(a,b).
	\]
	Choose $\beta_0\in\Lambda^{1,\mu}_{u,w}(\Omega,k).$
	If $\ell_d\left(\beta[a,u]\right)<\frac{1}{2}d(a,b)$, then
	\[
	d(u,w)
	\ge
	d(a,w)-d(a,u)
	>
	2d(a,b)-\frac12d(a,b)=\frac{3}{2}d(a,b).
	\]
	If $\ell_d\left(\beta[u,b]\right)<\frac{1}{2}d(a,b)$, then
	\[
	d(u,w)
	\ge
	d(a,w)-d(a,b)-d(b,u)
	>
	2d(a,b)-d(a,b)-\frac12d(a,b)=\frac12d(a,b).
	\]
	In either case, we have shown that $d(u,w)>\frac12d(a,b)$, and thus $\ell_d(\beta_0)>\frac12d(a,b)$.
	Choose $u'\in\beta_0$ so that
	\beq\label{eq:314-length-u'}
	\ell_d(\beta_0[u,u'])=\frac14d(a,b).
	\eeq
	Since $\beta_0$ is $B_0$-uniform by Theorem~\ref{thm:intrinsic-210}, it holds
	\begin{equation}
		\label{eq:314-deltauprime}
		d_\Omega(u')\ge \frac{1}{B_0}\ell_d\left(\beta_0[u,u']\right)\wedge \ell_d\left(\beta_0[u',w]\right)
		=\frac{1}{4B_0}d(a,b).
	\end{equation}
	On the other hand, since $\beta$ is also $B_0$-uniform by Theorem~\ref{thm:intrinsic-210}, \eqref{eq:314-length} and \eqref{eq:314-length-u'} give
	\[
	d(u',y)\leq d(u',u)+d(u,a)+d(a,y)\leq \left(B_0+\frac54\right)d(a,b).
	\]
	Together with \eqref{eq:314-deltaq} and
	\eqref{eq:314-deltauprime}, this yields
	\[
	k(u',y)
	\le
	4A^2\log\bigl(1+B_0(4B_0+5)\bigr).
	\]
	Finally, since $\beta_0$ is a $(1,\mu)$-quasigeodesic in $(\Omega,k)$, the above estimate implies 
	\[
	\begin{aligned}
		k(w,y)
		&\le 
		k(w,u')+k(u',y)\le
		\ell_k(\beta_0)+k(u',y)\le
		k(w,u)+\mu+k(u',y)\\
		&\le
		k(w,u)+\mu+4A^2\log\bigl(1+B_0(4B_0+5)\bigr)\\
		&=
		k(w,\beta)+\mu+4A^2\log\bigl(1+B_0(4B_0+5)\bigr).
	\end{aligned}
	\]
	In both cases, we have proved the right-hand inequality in \eqref{eq:314-b}.
\end{proof}


\begin{proof}[Proof of Proposition~\textup{\ref{prop:quasisimilar}}]
	First, note that by Theorem \ref{thm:Gromov and roughly starlike}, $(\Omega,k)$ is a complete $\delta=\delta(A)$-Gromov hyperbolic space. Let $\varepsilon_1(A)=\varepsilon_0(\delta(A),1,1)$ be the constant given by Theorem~\ref{thm:uniform}.	We shall verify separately the local bi-Lipschitz condition \eqref{eq:con2} and the quasisymmetry of the identity map
	$\operatorname{id}:\Omega\longrightarrow\Omega_\varepsilon$ for $0<\varepsilon\leq\varepsilon_1(A)$.
	
	Choose $\lambda=\lambda(A)\in(0,1/4)$ small so that
	\begin{equation}\label{6.9}
		4A^2\log\left(1+\frac{2\lambda}{1-\lambda}\right)\leq \frac{1}{\varepsilon_1(A)}.
	\end{equation}
	If $y,z\in B_d\bigl(x,\lambda d_\Omega(x)\bigr),$ then
	\[
	d_\Omega(y)\wedge d_\Omega(z)\geq(1-\lambda)d_\Omega(x),
	\quad
	d(y,z)\leq2\lambda d_\Omega(x),
	\]
	and hence
	\begin{equation}\label{6.10}
		k(y,z)\stackrel{\eqref{eq:j-k-upper}}{\le}
		4A^2\log\left(1+\frac{d(y,z)}{d_\Omega(y)\wedge d_\Omega(z)}\right) \leq\frac{1}{\varepsilon_1(A)}.
	\end{equation}
	The same argument gives $k(x,y)+k(x,z)\leq\frac{2}{\varepsilon_1(A)}$. Consequently,
	\[
	\bigl|(y|z)_w-k(w,x)\bigr|\leq C,
	\]
	where $C=C(A)$ is a constant,
	and the two-point estimate in Lemma~\ref{lem:two-point} yields
	\begin{equation}\label{6.11}
		d_\varepsilon(y,z)\asymp_{A}
		\rho_\varepsilon(x)\,k(y,z).
	\end{equation}
	On the other hand, \eqref{eq:quasihyperbolic inequality}, \eqref{eq:j-k-upper}, and the elementary comparability of $\log(1+t)$ and $t$ on bounded intervals give
	\begin{equation}\label{6.12}
		k(y,z)\asymp_{A}\frac{d(y,z)}{d_\Omega(x)}.
	\end{equation}
	Combining \eqref{6.11} and \eqref{6.12}, we obtain
	\begin{equation}\label{6.13}
		d_\varepsilon(y,z)
		\asymp_{A}
		\frac{\rho_\varepsilon(x)}{d_\Omega(x)}\,d(y,z).
	\end{equation}
	Thus the condition \eqref{eq:con2} holds with local scale
	$c_x=\frac{\rho_\varepsilon(x)}{d_\Omega(x)}.$
	
	It remains to prove that the identity map is quasisymmetric. By \cite[Theorem 6.6]{Vaisala1999}, it is enough to prove that for distinct $a,b,c\in\Omega$,
	\begin{equation}\label{6.14}
		d(a,b)\leq d(a,c)
		\quad\text{implies}\quad
		d_\varepsilon(a,b)\leq C(A,\varepsilon)\,d_\varepsilon(a,c).
	\end{equation}
Choose $\alpha\in\Lambda^{1,1}_{a,w}(\Omega,k),\,
	\beta_b\in\Lambda^{1,1}_{a,b}(\Omega,k),\,
	\beta_c\in\Lambda^{1,1}_{a,c}(\Omega,k),$
	and let $y_b,y_c\in\alpha$ be the points associated with $b,c$,
	respectively, as in Lemma~\ref{lem:intrinsic-314}. By Lemma~\ref{lem:intrinsic-314} and Theorem~\ref{thm:intrinsic-36}, we have 
	\begin{equation}\label{6.15}
		\bigl|(a|b)_w-k(w,y_b)\bigr|\leq C
		\quad \text{and}\quad 
		\bigl|(a|c)_w-k(w,y_c)\bigr|\leq C.
	\end{equation}
	Since $d(a,b)\leq d(a,c)$, the point $y_b$ is encountered before
	$y_c$ when traveling from $a$ to $w$ along $\alpha$. Since $\alpha$
	is a quasihyperbolic $(1,1)$-quasigeodesic in $(\Omega,k)$, it holds 
	\begin{equation}\label{6.16}
		k(w,y_b)-k(w,y_c)\geq k(y_b,y_c)-1.
	\end{equation}	
	The two-point estimate of Lemma~\ref{lem:two-point},  \eqref{6.15} and \eqref{6.16} therefore give
	\begin{equation}\label{6.17}
		\frac{d_\varepsilon(a,b)}{d_\varepsilon(a,c)}
		\leq
		C e^{-\varepsilon k(y_b,y_c)}
		\frac{1\wedge\varepsilon k(a,b)}
		{1\wedge\varepsilon k(a,c)}.
	\end{equation}

	Note that since 
	\[
	d(a,b)\leq d(a,c)\leq d_{\Omega}(a)\left(e^ {k(a,c)}-1\right),
	\]
	we may choose $c_0=c_0(A)>0$ sufficiently small so that if $k(a,c)\leq c_0$, then $b,c\in B_d\bigl(a,\lambda d_\Omega(a)\bigr).$
	In this case, \eqref{6.14} follows immediately from the local comparison
	\eqref{6.13}.
	It remains to consider the case where $k(a,c)>c_0$. Since
	$
	1\wedge\varepsilon k(a,c)
	\geq
	1\wedge\varepsilon c_0,
	$
	while $e^{-\varepsilon k(y_b,y_c)}\leq1$, \eqref{6.17} gives 
	\[
	\frac{d_\varepsilon(a,b)}{d_\varepsilon(a,c)}
	\leq
	\frac{C}{1\wedge\varepsilon c_0}.
	\]
	Hence the identity map is quantitatively quasisymmetric. 
	The proof of Proposition~\ref{prop:quasisimilar} is complete.
\end{proof}


Next, we construct a counter-example, which shows that the quasisimilarity constant cannot be uniform with respect to $\varepsilon$. This shows in particular that \cite[Proposition 4.28]{BHK} is inaccurately stated. More precisely, we have 
\begin{exam}\label{exam:counter-example}
	Let
	\[
	\Omega=\mathbb B^2
	=
	\{x\in\mathbb R^2:|x|<1\}
	\]
	be the Euclidean unit disk, equipped with the Euclidean metric $d(x,y)=|x-y|.$
	Then $\Omega$ is a bounded uniform domain. Let $w=0$ and for $0<\varepsilon<1$, let $k$ denote the quasihyperbolic metric on $\Omega$. Define
	\[
	\rho_\varepsilon(x)
	=
	e^{-\varepsilon k(w,x)}.
	\]
	Let $d_\varepsilon$ be the conformal deformation of the
	quasihyperbolic metric:
	\[
	d_\varepsilon(x,y)
	=
	\inf_\gamma
	\int_\gamma
	\rho_\varepsilon\,ds_k,
	\quad
	ds_k=\frac{ds}{\delta},
	\]
	where the infimum is taken over all rectifiable curves in $(\Omega,d)$ joining $x$, $y$, and
	\[
	\delta(x)=\operatorname{dist}(x,\partial\Omega)=1-|x|.
	\]
	
	Then there is no quasisymmetry function
	$
	\eta:[0,\infty)\to[0,\infty),
	$
	independent of $\varepsilon$, such that
	\[
	\frac{d_\varepsilon(x,y)}
	{d_\varepsilon(x,z)}
	\leq
	\eta\left(
	\frac{d(x,y)}{d(x,z)}
	\right)
	\]
	for all distinct $x,y,z\in\Omega$ and all sufficiently small
	$\varepsilon>0$.
\end{exam}

\begin{proof}
	
	For $x\in\mathbb B^2$, we have
$
	\delta(x)=1-|x|.
	$
	Let $x=r\theta,\,
	0\leq r<1,\,
	|\theta|=1.$
	Any radial segment is a quasihyperbolic geodesic with
	\[
	k(0,x)=-\log(1-|x|).
	\]	
	Consequently,
	\[
	\rho_\varepsilon(x)
	=
	e^{-\varepsilon k(0,x)}
	=
	(1-|x|)^\varepsilon.
	\]
	Since
	$
	ds_k=\frac{ds}{1-|x|},
	$
	the deformed length element is
	\[
	\rho_\varepsilon\,ds_k
	=
	(1-|x|)^{\varepsilon-1}\,ds.
	\]
	Direct computation shows that radial segments are $d_\varepsilon$-geodesics: if $0\leq r<s<1$ and $\theta\in S^1$, then
	\begin{equation}\label{eq:CE 4}
		d_\varepsilon(r\theta,s\theta)
		=
		\int_r^s(1-t)^{\varepsilon-1}\,dt.
	\end{equation}

	For $r,s\in(0,1)$, consider the two points $r\theta$ and $-s\theta$. Then the diameter segment joining them through the origin is a $d_\varepsilon$-geodesic with 	
	\begin{equation}\label{eq:CE 6}
		d_\varepsilon(r\theta,-s\theta)
		=	\int_0^r(1-t)^{\varepsilon-1}\,dt
		+
		\int_0^s(1-t)^{\varepsilon-1}\,dt.
	\end{equation}

	Fix
	$
	x=\left(\frac12,0\right),
	z=\left(-\frac45,0\right),
	$
	and for $0<t<1/2$, set
	$
	y_t=(1-t,0).
	$
	Then
	\[
	\frac{d(x,y_t)}{d(x,z)}
	=
	\frac{10}{13}\left(\frac12-t\right)
	<1.
	\]
	Moreover,
	\[
	\lim_{t\downarrow0}
	\frac{d(x,y_t)}{d(x,z)}
	=
	\frac5{13}.
	\]

	Both $x$ and $y_t$ lie on the positive $x_1$-axis. Hence, by \eqref{eq:CE 4}, it holds 
	\[
	d_\varepsilon(x,y_t)
	=
	\int_{1/2}^{1-t}
	(1-r)^{\varepsilon-1}\,dr		=
	\frac{(1/2)^\varepsilon-t^\varepsilon}{\varepsilon}.
	\]

	The points $x$ and $z$ lie on opposite sides of the origin.
	By \eqref{eq:CE 6} and \eqref{eq:CE 4}, we have
	\[
	d_\varepsilon(x,z)
	=
	d_\varepsilon(x,0)
	+
	d_\varepsilon(0,z)=\frac{
		2-2^{-\varepsilon}-5^{-\varepsilon}
	}{\varepsilon}.
	\]

	Combining the above two estimates gives 
	\[
	\frac{d_\varepsilon(x,y_t)}
	{d_\varepsilon(x,z)}
	=
	\frac{
		2^{-\varepsilon}-t^\varepsilon
	}{
		2-2^{-\varepsilon}-5^{-\varepsilon}
	}.
	\]
	In particular, it holds
	\begin{equation}\label{eq:CE 16}
		\lim_{t\downarrow0}
		\frac{d_\varepsilon(x,y_t)}
		{d_\varepsilon(x,z)}
		=
		\frac{
			2^{-\varepsilon}
		}{2-2^{-\varepsilon}-5^{-\varepsilon}}.
	\end{equation}

	Note that if $\varepsilon$ is small, then
	$$	\frac{
		2^{-\varepsilon}
	}{
		2-2^{-\varepsilon}-5^{-\varepsilon}
	}
	=
	\frac{1}{\varepsilon\log10}+O(1)
	\quad
	(\varepsilon\downarrow0).
	$$
	In particular, we have
	\begin{equation}\label{eq:CE 20}
		\lim_{\varepsilon\downarrow0}
		\lim_{t\downarrow0}
		\frac{d_\varepsilon(x,y_t)}
		{d_\varepsilon(x,z)}
		=
		+\infty.
	\end{equation}

	Suppose, for contradiction, that there exists a quasisymmetry function
	$
	\eta:[0,\infty)\to[0,\infty)
	$
	which is independent of $\varepsilon$ and such that
	\[
	\frac{d_\varepsilon(u,v)}
	{d_\varepsilon(u,q)}
	\leq
	\eta\left(
	\frac{d(u,v)}
	{d(u,q)}
	\right)
	\]
	for all distinct $u,v,q\in\Omega$ and all $0<\varepsilon\leq\varepsilon_0$.
	
	Apply the $\eta$-quasisymmetric condition with $u=x$, $v=y_t$ and $q=z$. By the preceding computation,
	\[
	\frac{d(x,y_t)}{d(x,z)}<1.
	\]
	Therefore, for every $t\in(0,1/2)$ and every
	$0<\varepsilon\leq\varepsilon_0$, we have
	\[
	\frac{d_\varepsilon(x,y_t)}
	{d_\varepsilon(x,z)}
	\leq
	\eta(1).
	\]
	Letting $t\downarrow0$, we obtain from \eqref{eq:CE 16} and the above estimate that
	\[
	\frac{
		2^{-\varepsilon}
	}{
		2-2^{-\varepsilon}-5^{-\varepsilon}
	}
	\leq
	\eta(1)<\infty. 
	\]
	But by \eqref{eq:CE 20}, the left-hand side tends to $+\infty$ as
	$\varepsilon\downarrow0$. This contradicts the finiteness of
	$\eta(1)$.
	
	Hence, no quasisymmetry function can be chosen independently of 	$\varepsilon$.
\end{proof}

\subsection{Property of $\mathcal{Q}$}
To finish the proof of Theorem~\ref{thm:one-to-one correspondence}, two assertions remain to be verified. First, the map~$\mathcal{Q}$ sends mutually quasisimilar uniform spaces into a single quasiisometry class. Second, composing the dampening deformation~$\mathcal{D}$ with the quasihyperbolization~$\mathcal{Q}$,
\[
(X, |x-y|) \xrightarrow{\quad\mathcal{D}\quad} (X_\varepsilon, d_\varepsilon) \xrightarrow{\quad\mathcal{Q}\quad} (X_\varepsilon, k_\varepsilon),
\]
returns a space that remains in the same quasiisometry class as the original. Here $X$ denotes a complete roughly starlike $\delta$-Gromov hyperbolic space, and the parameter~$\varepsilon$ is chosen in the range $0 < \varepsilon \leq \varepsilon_1(\delta)$ according to Theorem~\ref{thm:uniform}. Both of these assertions are proved in Propositions~\ref{prop:quasisimilar to quasiisometry} and~\ref{prop:quasiisometry}, respectively.

\begin{prop}\label{prop:quasisimilar to quasiisometry}
	A quasisimilarity between two uniform spaces is a quasiisometry with respect to the quasihyperbolic metrics, quantitatively.
\end{prop}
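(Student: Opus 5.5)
Let $f\colon(\Omega,d)\to(\Omega',d')$ be a quasisimilarity with data $(\eta,L,\lambda)$ between $A$-uniform spaces, and write $k,k'$ for the quasihyperbolic metrics. The natural target is a rough bi-Lipschitz bound
\[
\tfrac1{C}k(x,y)-C\le k'(f(x),f(y))\le C\,k(x,y)+C .
\]
In a length space, such a bound together with the local estimate below upgrades to a genuine bi-Lipschitz estimate via a chaining argument along short curves. Moreover, $f^{-1}$ is again a quasisimilarity with data depending only on $(\eta,L,\lambda)$, since the inverse of an $\eta$-quasisymmetry is $\eta'$-quasisymmetric. Hence it suffices to prove the upper bound $k'(f(x),f(y))\le C\,k(x,y)$, with $C$ depending only on $A,\eta,L,\lambda$, and then apply the same bound to $f^{-1}$.

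\textbf{Step 1: comparison of boundary distances.} First I will show that $d_{\Omega'}(f(x))\asymp c_x\,d_\Omega(x)$ quantitatively. Quasisymmetric maps send boundary sequences to boundary sequences: since $f$ is a homeomorphism, its quasisymmetry extends to the completions. Take $a\in\partial\Omega$ with $d(x,a)\le 2d_\Omega(x)$ and a point $z$ with $d(x,z)=\lambda d_\Omega(x)/2$. The latter exists because $\Omega$ is a length space and $d_\Omega(x)>0$. By the local bi-Lipschitz condition \eqref{eq:con2},
\[
d'(f(x),f(z))\asymp_L c_x\,\lambda\, d_\Omega(x).
\]
Comparing the triples $(x,z,a_n)$ with $a_n\to a$ through $\eta$ then bounds $d_{\Omega'}(f(x))$ above and below by constants times $c_x d_\Omega(x)$. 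For the lower bound we also use that any boundary point of $\Omega'$ is the image of a boundary point of $\Omega$, hence lies at $d$-distance at least $d_\Omega(x)$ from $x$. This step uses only quasisymmetry.

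\textbf{Step 2: local estimate.} Let $y\in B_d(x,\lambda' d_\Omega(x))$ with a small $\lambda'\le\lambda$. By \eqref{eq:j-k-upper} and \eqref{eq:quasihyperbolic inequality}, $k(x,y)\asymp d(x,y)/d_\Omega(x)$; this is the same comparability used in \eqref{6.12}. On the image side, \eqref{eq:con2} gives $d'(f(x),f(y))\asymp c_x\,d(x,y)$. Combined with Step 1, this says that $f(y)$ lies in a small Whitney ball about $f(x)$, and therefore
\[
k'(f(x),f(y))\asymp \frac{d'(f(x),f(y))}{d_{\Omega'}(f(x))}\asymp \frac{d(x,y)}{d_\Omega(x)}\asymp k(x,y).
\]

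\textbf{Step 3: globalization, the main obstacle.} Because quasihyperbolic geodesics may not exist, for general $x,y$ I will take a quasihyperbolic $(1,1)$-quasigeodesic $\gamma\in\Lambda^{1,1}_{x,y}(\Omega,k)$. Along $\gamma$ I choose points $x=z_0,z_1,\dots,z_m=y$ with $z_{i+1}\in B_d(z_i,\lambda' d_\Omega(z_i))$ and $k(z_i,z_{i+1})\ge c_0(A,\lambda')$, except possibly for the last pair. Such points exist because $k$-balls of a fixed small radius contain Whitney balls, again by \eqref{eq:quasihyperbolic inequality}. Then $m\le \ell_k(\gamma)/c_0+1\le (k(x,y)+1)/c_0+1$. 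Summing Step 2 over the pairs gives
\[
k'(f(x),f(y))\le C\,(k(x,y)+1).
\]
The additive constant is the main technical point. When $k(x,y)$ is below a threshold, $y$ lies in a Whitney ball of $x$ and Step 2 applies directly, so there is no additive constant. When $k(x,y)$ is above the threshold, the additive constant is absorbed into the multiplicative one. The same argument applied to $f^{-1}$ gives the reverse inequality, so $f$ is an $L'$-quasiisometry of $(\Omega,k)\to(\Omega',k')$ with $L'=L'(A,\eta,L,\lambda)$.
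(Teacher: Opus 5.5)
Your proposal is correct and follows essentially the same route as the paper. Both arguments prove a multiplicative local estimate $k'(f(x),f(y))\le C\,k(x,y)$ on Whitney balls (the paper's Case 1 uses exactly the half $d'_{\Omega'}(f(x))\gtrsim c_x\, d_\Omega(x)$ of your Step 1), then chain along a nearly $k$-length-minimizing curve, sum, and get the reverse inequality by symmetry. The only difference is cosmetic: since your per-step bound is already multiplicative, summing gives $k'(f(x),f(y))\le C\,\ell_k(\gamma)\le 2C\,k(x,y)$ directly by \eqref{eq:prop of curve}, so your threshold argument for an additive constant is unnecessary. (The paper instead takes any curve with $\ell_k(\gamma)\le k(x,y)+\varsigma$ and lets $\varsigma\to0$.)
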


\begin{proof}
	Let	$(\Omega, d)$ be $A$-uniform, $(\Omega', d')$ be $A'$-uniform, and let $f\colon (\Omega, d) \longrightarrow (\Omega', d')$ be a quasisimilar map with data $(\eta, L, \lambda)$.	To prove $f\colon (\Omega, k) \longrightarrow (\Omega', k')$ is a quasiisometry, it suffices to verify that $f$ is Lipschitz by symmetry. Fix $x, y \in \Omega$. We consider the following two cases.
	\smallskip 
	
\textbf{Case 1:} $d(x,y) < \lambda \, (d_{\Omega}(x) \wedge d_{\Omega}(y))$.
\smallskip 

In this case, we claim that
\beq\label{5.11}
d'(f(x), f(y)) \leq C_1 \, \frac{d_{\Omega'}'(f(z))}{d_{\Omega}(z)} \, d(x,y) \quad \text{for } z = x \text{ or } y,	
\eeq
where $C_1=C_1(\eta,L, \lambda)\geq1.$

We only consider the case for $z = x$, as the proof of other case is similar. 
By the quasisimilarity condition \eqref{eq:quasisymmetric} and \eqref{eq:quasisymmetric}, for any $u, v \in \partial B(x, \frac{\lambda}{2} d_\Omega(x))$, it holds 
\[
\frac{d'(f(u), f(x))}{d'(f(v), f(x))} \leq \eta\left(\frac{d(u,x)}{d(v,x)}\right) = \eta(1).
\]
Hence there exists $x_0 \in \partial B(x, \frac{\lambda}{2} d_{\Omega}(x))$ such that  $d'(f(x_0), f(x)) \leq \eta(1) \, d_{\Omega'}'(f(x))$ and so
$$\frac{d'(f(x), f(y))}{d'(f(x), f(x_0))} \geq \frac{d'(f(x), f(y))}{\eta(1) \, d_{\Omega'}'(f(x))}.$$
On the other hand, by \eqref{eq:con2}, we have 
\beqq
\frac{d'(f(x), f(y))}{d'(f(x), f(x_0))} \leq \frac{L^2 \, d(x,y)}{d(x, x_0)} = \frac{2L^2 \, d(x,y)}{\lambda \, d_{\Omega}(x)}.
\eeqq
Combining these estimates gives 
\beq
d'(f(x), f(y)) \leq C_1\, \frac{d_{\Omega'}'(f(x))}{d_{\Omega}(x)} \, d(x,y),
\eeq
where $C_1=C_1(\eta,L, \lambda)\geq1.$ This proves \eqref{5.11}.

By Lemma~\ref{lem:intrinsic-213} and \eqref{5.11}, we have 
\beq\label{5.12}
\begin{aligned}
	k'(f(x), f(y)) &\leq 4A'^2 \log\left(1 + \frac{d'(f(x), f(y))}{d_{\Omega'}'(f(x)) \wedge d_{\Omega'}'(f(y))}\right) \\
	&\leq 4A'^2 \log\left(1 + \frac{C_1 \, d(x,y)}{d_{\Omega}(x) \wedge d_{\Omega}(y)}\right) \\
	&\leq 4A'^2 C_1 \, \log\left(1 + \frac{d(x,y)}{d_{\Omega}(x) \wedge d_{\Omega}(y)}\right) \\
	&\leq 4A'^2 C_1 \, k(x,y),
\end{aligned}
\eeq
where in the second last inequality, we used the following elementary inequality 
\beqq 
\log(1+tx)\leq t\log(1+x), \quad t\geq1,\,x\geq 0.
\eeqq	

\textbf{Case 2:} $d(x,y) \geq \lambda \, (d_{\Omega}(x) \wedge d_{\Omega}(y))$.
\smallskip 
	
	For any $\varsigma > 0$, select a curve $\gamma$ joining $x$ and $y$ so that  $\ell_k(\gamma) \leq k(x,y) + \varsigma$, and a point $y_0 \in \gamma$ so that $d_\Omega(y_0) < 2 \inf\limits_{z \in \gamma} d_\Omega(z)$.
	
	Set $x_0 = x$ and for $i=1,\cdots,n$, let $x_i$ be the last point of $\gamma[x_{i-1}, y] \cap \overline{B}(x_{i-1}, \frac{1}{2} \lambda \, d(y_0))$ along the curve $\gamma$ from $x$ to $y$, where $x_{n}=y$, then for $i=1,\cdots,n$,
	$$d(x_{i-1}, x_i) \leq \frac{1}{2} \lambda \, d_{\Omega}(y_0) < \lambda \, (d_{\Omega}(x_{i-1}) \wedge d_{\Omega}(x_i)).$$
	Using the estimate \eqref{5.12} from Case 1, we obtain 
	\beqq
	\begin{aligned}
		k'(f(x), f(y)) &\leqslant \sum_{i=1}^{n} k'(f(x_{i-1}), f(x_i))\leq \sum_{i=1}^{n} 4A'^2 C_1 \, k(x_{i-1}, x_i)\\
		&\leq 4A'^2 C_1 \, \ell_k(\gamma)\leq 4A'^2 C_1 \, (k(x,y) + \varsigma).
	\end{aligned}
	\eeqq
	Sending $\varsigma$ to $0$, we have
	$$k'(f(x), f(y)) \leqslant 4A'^2 C_1 \, k(x,y).$$
	
	This completes the proof.
\end{proof}
\begin{prop}\label{prop:quasiisometry}
	If $X$ is complete $\delta$-Gromov hyperbolic and $(H, \nu, h)$-roughly starlike with respect to $w$, then for $0 < \varepsilon \leq \varepsilon_1(\delta)$ as in Theorem~\textup{\ref{thm:uniform}} and any $ x, y \in X$, we have
	\[
	c' \varepsilon |x - y| \leq k_\varepsilon(x, y) \leq  2e \varepsilon |x - y|,
	\]
	where $c' \in (0, 1)$ depends only on the given data. In particular,
	the identity map $(X, \varepsilon |x-y|) \to (X_\varepsilon, k_\varepsilon)$ is quasiisometric.
\end{prop}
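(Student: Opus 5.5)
The plan is to derive both bounds from the boundary-distance comparison of Lemma~\ref{lem:varepsilon boundary distance},
\[
\frac{1}{\varepsilon e}\rho_\varepsilon(u)\le d_\varepsilon(u)\le A_1\frac{\rho_\varepsilon(u)}{\varepsilon},
\]
which says that, up to constants, the quasihyperbolic density of $X_\varepsilon$ is $\varepsilon$ times the $d$-length element. Throughout, $k_\varepsilon$ is the quasihyperbolic metric of $(X_\varepsilon,d_\varepsilon)$. For a $d$-rectifiable curve $\gamma$ we have $ds_\varepsilon=\rho_\varepsilon\,ds$, hence
\[
\frac{\varepsilon}{A_1}\,\ell_d(\gamma)\le \ell_{k_\varepsilon}(\gamma)=\int_\gamma\frac{\rho_\varepsilon\,ds}{d_\varepsilon(u)}\le e\varepsilon\,\ell_d(\gamma).
\]
I would first check that rectifiable curves in $X$ and in $X_\varepsilon$ coincide and that $\ell_\varepsilon(\gamma)=\int_\gamma\rho_\varepsilon\,ds$ holds. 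This follows because $\rho_\varepsilon$ is continuous and positive, so the two metrics are locally bi-Lipschitz by \eqref{eq:inequality}.

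\textbf{Upper bound.} Given $\varsigma>0$, I take a curve $\gamma:x\curvearrowright y$ with $\ell_d(\gamma)\le |x-y|+\varsigma$. The display gives $k_\varepsilon(x,y)\le e\varepsilon(|x-y|+\varsigma)$, and letting $\varsigma\to0$ yields $k_\varepsilon(x,y)\le e\varepsilon|x-y|\le 2e\varepsilon|x-y|$. The factor $2$ is slack; it can absorb the small error from near-length-minimizing curves.

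\textbf{Lower bound.} For any rectifiable $\gamma:x\curvearrowright y$, the display gives $\ell_{k_\varepsilon}(\gamma)\ge (\varepsilon/A_1)\ell_d(\gamma)\ge(\varepsilon/A_1)|x-y|$. Taking the infimum over $\gamma$ gives $k_\varepsilon(x,y)\ge c'\varepsilon|x-y|$. We may take $c'=\min\{1/A_1,1/2\}$, which depends only on $\delta,H,\nu,h$ once $\varepsilon\le\varepsilon_1(\delta)$ is fixed. Both bounds together show that the identity $(X,\varepsilon|x-y|)\to(X_\varepsilon,k_\varepsilon)$ is bi-Lipschitz, that is, a quasiisometry in the paper's sense. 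That it is a homeomorphism follows from the same local comparison.

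\textbf{Main obstacle.} The analytic content is entirely in Lemma~\ref{lem:varepsilon boundary distance}, which is already available: its upper bound uses rough starlikeness and Proposition~\ref{prop:boundary-id}. The step needing care is justifying that the infimum defining $k_\varepsilon$ may be taken over $d$-rectifiable curves with $ds_\varepsilon=\rho_\varepsilon\,ds$. I would handle it by proving that on each $d$-ball of radius $1/\varepsilon$ the metrics $d_\varepsilon$ and $\rho_\varepsilon(x)\,d$ are bi-Lipschitz equivalent with constant $e^{2}$, using \eqref{eq:inequality}. This is a local statement, so lengths of curves, computed by fine partitions, transform via the density.
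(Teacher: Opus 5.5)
Your proof is correct. The upper bound is the paper's argument; the paper integrates $\rho_\varepsilon/d_\varepsilon\le e\varepsilon$ along a $(1,1)$-quasigeodesic, which is where its factor $2$ comes from, while your near-length-minimizing curves give $e\varepsilon|x-y|$ directly.

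For the lower bound you take a genuinely different and shorter route: you integrate the pointwise bound $\rho_\varepsilon/d_\varepsilon\ge\varepsilon/A_1$ from Lemma~\ref{lem:varepsilon boundary distance} along every competing curve. The paper proceeds differently:
\begin{itemize}
\item it starts from $k_\varepsilon(x,y)\ge\log\bigl(1+d_\varepsilon(x,y)/(d_\varepsilon(x)\wedge d_\varepsilon(y))\bigr)$;
\item it replaces $d_\varepsilon(x,y)$ by $\ell_\varepsilon(\gamma)/K$ using the Gehring--Hayman inequality (Theorem~\ref{thm:G-H}) for a $(1,1)$-quasigeodesic $\gamma$;
\item it uses Lemma~\ref{lem:varepsilon boundary distance} only at $x$ and $y$;
\item it splits into the cases $\varepsilon|x-y|\le 2\log 2$ and $\varepsilon|x-y|>2\log 2$. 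The second case needs the tripod point of a $2$-short arc, the stability lemma and \cite[Lemma 4.1]{Allu} to show $\rho_\varepsilon(x)\wedge\rho_\varepsilon(y)\lesssim\rho_\varepsilon(x_1)e^{-\varepsilon|x-y|/2}$ for some $x_1\in\gamma$.
\end{itemize}
Your version needs none of this. It also gives the explicit constant $c'=\min\{1/A_1,1/2\}$, which is independent of $K$ and $M_\delta$.

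What your route costs is that the infimum defining $k_\varepsilon$ runs over all $d_\varepsilon$-rectifiable curves. So you need every such curve to be $d$-rectifiable with $ds_\varepsilon=\rho_\varepsilon\,ds$. The paper's lower bound only uses \eqref{eq:quasihyperbolic inequality} in $X_\varepsilon$ together with the change of variables along one concrete $d$-rectifiable curve.

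You flagged exactly this point, and your local comparison handles it, with one adjustment. To get the exact identity $ds_\varepsilon=\rho_\varepsilon\,ds$, and hence the constant $2e$ rather than something like $e^3$, run the ball comparison at radius $r\to0$; the same argument then gives constant $e^{2\varepsilon r}\to 1$. The fixed radius $1/\varepsilon$ with constant $e^{2}$ is not enough. Alternatively, simply quote \eqref{eq:length}, which the paper itself relies on in its upper bound.
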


\begin{proof}
	Let  $\gamma \in \Lambda_{x,y}^{1,1}(X,d)$ be parametrized by $d$-arclength from $x$ and let 
	$$L_\varepsilon(t) = \ell_\varepsilon(\gamma[0,t]) = \int_0^t \rho_\varepsilon(\gamma(s)) \, ds.$$ 
	Then by \eqref{eq:varepsilon boundary distance} and \eqref{eq:prop of curve}, we have 
	\[
	k_\varepsilon(x, y) \leq \int_\gamma \frac{|d_\varepsilon z|}{d_\varepsilon(z)} = \int_0^{\ell_d(\gamma)} \frac{d L_\varepsilon(t)}{d_\varepsilon(\gamma(t))} = \int_0^{\ell_d(\gamma)} \frac{\rho_\varepsilon(\gamma(t))}{d_\varepsilon(\gamma(t))} \, dt \leq 2e \varepsilon  |x - y|.
	\]
	
	In the other direction, it follows from the Gehring-Hayman inequality in Theorem~\ref{thm:G-H} and \eqref{eq:varepsilon boundary distance} that 
	\beq\label{5.13}
	\begin{aligned}
		k_\varepsilon(x, y) &\geq \log\left(1 + \frac{d_\varepsilon(x, y)}{d_\varepsilon(x) \wedge d_\varepsilon(y)}\right)
		\geq \log\left(1 + \frac{\ell_\varepsilon(\gamma)}{K(d_\varepsilon(x) \wedge d_\varepsilon(y))}\right)\\
		&\geq \log\left(1 + \frac{\varepsilon \ell_\varepsilon(\gamma)}{K A_1(\rho_\varepsilon(x) \wedge \rho_\varepsilon(y))}\right),
	\end{aligned}
	\eeq
	where $K = K(\delta)\geq1$ and $A_1=A_1(\delta, H, \nu, h)\geq1$.
	
	In the following, we estimate $\ell_\varepsilon(\gamma)$ and $\rho_\varepsilon(x) \wedge \rho_\varepsilon(y)$ in two cases.
	\smallskip 
	
	\textbf{Case 1:} $\varepsilon |x - y| \leq 2 \log 2$.
	\smallskip 
	
	By \eqref{eq:prop of curve}, it holds 
	\beqq
	\begin{aligned}
		\ell_\varepsilon(\gamma) &= \int_\gamma \rho_\varepsilon \, ds \geq\int_0^{\ell_d(\gamma)}\rho_\varepsilon(x)e^{-\varepsilon t}\,dt\\
		&\geq\rho_\varepsilon(x)\exp\left(-2\varepsilon  |x - y|\right)\,\ell_d(\gamma)\\
		&\geq \frac{1}{16} \rho_\varepsilon(x) |x - y|.
	\end{aligned}
	\eeqq
	From this,  \eqref{5.13}, and using $a \log(1 + t) \geq t \log(1 + a)$ for $0 \leq t \leq a$, we obtain
	\beq
	\begin{aligned}
		k_\varepsilon(x, y) &\geq \log\left(1 + \frac{\varepsilon \rho_\varepsilon(x) |x - y|}{ 16K A_1(\rho_\varepsilon(x) \wedge \rho_\varepsilon(y))}\right) \\
		&\geq \log\left(1 + \frac{\varepsilon |x - y|}{16 K A_1}\right) \geq \frac{\varepsilon\log 2}{16K A_1} |x - y|\\
		& \geq c_1 \varepsilon |x - y|,
	\end{aligned}
	\eeq
	where $c_1=c_1(\delta, H, \nu, h) \in (0, 1)$.
	\smallskip
	
	\textbf{Case 2:} $\varepsilon |x - y| > 2 \log 2$.
	\smallskip 
	
	Let  $\alpha:x\curvearrowright y$ be $2$-short. Let $x_\alpha\in \alpha$ be the point induced by $w$ as in Definition~\ref{def:induced subdivision}, by Lemma~\ref{thm:stability},   
	there exists $x_1 \in \gamma$ so that 
	\beq\label{5.14}
	|x_\alpha - x_1| \leq M_\delta=M(\delta,1,1,2). 
	\eeq
	Applying \cite[Lemma 4.1]{Allu} (with $p=w$, $z=x_\alpha$ and $u=x$ or $y$), we obtain
	\beq
	\begin{aligned}
		|w - x| &\geq |w - x_\alpha| + |x - x_\alpha| - 8\delta - 16, \\
		|w - y| &\geq |w - x_\alpha| + |y -x_\alpha| - 8\delta - 16.
	\end{aligned}
	\eeq
	It follows from \eqref{5.14} that 
	\beqq
	\begin{aligned}
		|w - x| \vee |w - y| &\geq |w - x_\alpha| + |x - x_\alpha| \vee |y - x_\alpha| - 8\delta - 16 \\
		&\geq |w - x_1| + \frac{1}{2}|x - y|- M_\delta - 8\delta - 16 .
	\end{aligned}
	\eeqq
	Then
	\beqq
	\begin{aligned}
		\rho_\varepsilon(x) \wedge \rho_\varepsilon(y) &= \exp\left(-\varepsilon (|w - x| \vee |w - y|)\right) \\
		&\leq e^{\varepsilon( M_\delta+8\delta + 16)} e^{-\varepsilon |w - x_1| - \frac{\varepsilon}{2}|x - y|} \\
		&= e^{\varepsilon(M_\delta+8\delta + 16)} \rho_\varepsilon(x_1) e^{-\frac{\varepsilon}{2}|x - y|}.
	\end{aligned}
	\eeqq
	
	Next, we estimate  $\ell_\varepsilon(\gamma).$
	Since $x_1 \in \gamma$ and $\max\{|x - x_1|,|y - x_1|\}\geq \frac{1}{2}|x-y|,$ we obtain
	\beqq
	\begin{aligned}
		\ell_\varepsilon(\gamma) &= \ell_\varepsilon(\gamma[x, x_1]) + \ell_\varepsilon(\gamma[x_1, y]) \\
		&\geq \rho_\varepsilon(x_1) \left(\int_0^{|x - x_1|} e^{-\varepsilon t} \, dt + \int_0^{|y - x_1|} e^{-\varepsilon t} \, dt\right) \\
		&= \frac{\rho_\varepsilon(x_1)}{\varepsilon} \left(2 - e^{-\varepsilon |x - x_1|} - e^{-\varepsilon |y - x_1|}\right) \\
		&\geq \frac{\rho_\varepsilon(x_1)}{\varepsilon} \left(1 - e^{-\frac{\varepsilon}{2}|x - y|}\right) > \frac{\rho_\varepsilon(x_1)}{2\varepsilon}.
	\end{aligned}
	\eeqq
	
	Combining these two estimates with \eqref{5.13} gives  
	\beqq
	k_\varepsilon(x, y) \geq \log\left(1 + c_2 \, e^{\frac{\varepsilon}{2}|x - y|}\right) 
	\geq c_2 \log\left(1 + e^{\frac{\varepsilon}{2}|x - y|}\right) 
	\geq   \frac{c_2\,\varepsilon}{2} |x - y|,
	\eeqq
	where $c_2=c_2(\delta, H, \nu, h) \in (0, 1)$ and in the second inequality, we used the elementary inequality
	\beqq 
	\log(1+C_1x)\geq C_1\log(1+x), \quad C_1\leq1,\,x\geq 0.
	\eeqq	
	
	The proof of Proposition~\ref{prop:quasiisometry} is complete. 
\end{proof}

\subsection{Proof of Theorem~\textup{\ref{thm:one-to-one correspondence}}}
\begin{proof}[Proof of Theorem~\textup{\ref{thm:one-to-one correspondence}}]
To prove Theorem~\ref{thm:one-to-one correspondence}, it suffices to show that the 
constructions $\mathcal{D}$ and $\mathcal{Q}$ induce mutually inverse 
maps between the corresponding equivalence classes. 
Proposition~\ref{prop:quasiisometric to quasisimilar} gives the forward 
direction: $\mathcal{D}$ sends quasiisometric complete roughly 
starlike Gromov hyperbolic spaces to quasisimilar bounded uniform 
spaces. Conversely, Proposition~\ref{prop:quasisimilar to 
	quasiisometry} gives the reverse direction: $\mathcal{Q}$ lifts 
quasisimilarities of bounded uniform spaces to quasiisometries of complete roughly starlike Gromov hyperbolic spaces. 
It remains to verify that the two constructions are compatible. This 
is precisely the content of Propositions~\ref{prop:quasisimilar} 
and~\ref{prop:quasiisometry}, which show that the compositions 
$\mathcal{D} \circ \mathcal{Q}$ and $\mathcal{Q} \circ \mathcal{D}$ 
are equivalent to the respective identities. Theorem~\ref{thm:one-to-one correspondence} now follows.
\end{proof}


\appendix

\section{Proof of Proposition \ref{prop:boundary-id}}

As in \cite[Theorem~1.3]{AlluJose} and \cite[Proposition~4.13]{BHK}, to prove Proposition~\ref{prop:boundary-id}, it suffices to prove the following lemma, which establishes a two-point estimate via quasigeodesics.
\begin{lem}\label{lem:two-point}
	Let $X$ be a $\delta$-Gromov hyperbolic space and $X_\varepsilon$ be its uniformization
	for $0<\varepsilon\leq\varepsilon_0$, where $\varepsilon_0=\varepsilon_0(\delta,1,1,1)$ is given by Theorem~\ref{thm:G-H}. Then there exists a constant $C_0=C_0(\delta)\geq1$ such that for all $x,y\in X$,
	\begin{equation}\label{eq:twopoint}
		\frac1{C_0}\,\frac{e^{-\varepsilon(x|y)_p}}{\varepsilon}
		1\wedge\varepsilon|x-y|
		\leq d_\varepsilon(x,y)
		\leq C_0\,\frac{e^{-\varepsilon(x|y)_p}}{\varepsilon}
		1\wedge\varepsilon|x-y|.
	\end{equation}
\end{lem}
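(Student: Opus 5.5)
The plan is to prove the upper and lower bounds in \eqref{eq:twopoint} separately, with a $(1,1)$-quasigeodesic $\gamma\in\Lambda^{1,1}_{x,y}(X,d)$ as the test curve throughout. The key fact is Lemma~\ref{quasigeodesic ieq} (and its proof), combined with Lemma~\ref{lem:vaisala-package}: $\dist(p,\gamma)$ agrees with $(x|y)_p$ up to a constant depending only on $\delta$. Call this constant $\theta=\theta(\delta)$; it comes from Lemma~\ref{thm:stability} comparing $\gamma$ with a $2$-short arc. Fix $z_0\in\gamma$ with $|p-z_0|\le (x|y)_p+\theta$. Then $\rho_\varepsilon(z_0)\asymp e^{-\varepsilon(x|y)_p}$ with constants $e^{\pm\varepsilon_0\theta}$.

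\textbf{Upper bound.} Here $d_\varepsilon(x,y)\le \ell_\varepsilon(\gamma)$, and we split according to the size of $\varepsilon|x-y|$.

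If $\varepsilon|x-y|\le1$, then every $u\in\gamma$ satisfies $|u-x|\le 2|x-y|$. Moreover $(x|y)_p\le |p-x|\le (x|y)_p+|x-y|$. Hence $\rho_\varepsilon(u)\le e^{2}\,e^{-\varepsilon(x|y)_p}$ on $\gamma$, and so $\ell_\varepsilon(\gamma)\le 2e^2e^{-\varepsilon(x|y)_p}|x-y|$.

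If $\varepsilon|x-y|>1$, apply Lemma~\ref{quasigeodesic ieq} at the point $z_0$, not at $x$ or $y$, and split $\gamma=\gamma[x,z_0]\cup\gamma[z_0,y]$. By the proof of that lemma, on each half we have $|p-u|\ge |p-z_0|+|u-z_0|-\kappa$, after enlarging $\kappa$ by $\theta$ if necessary; this holds because $z_0$ is near the "center" point $x_\alpha\approx y_\alpha$ of the induced subdivision, so both branches go outward. Then the exponential integration used in \eqref{4.2} gives $\ell_\varepsilon(\gamma[x,z_0])+\ell_\varepsilon(\gamma[z_0,y])\lesssim e^{\varepsilon\kappa}\rho_\varepsilon(z_0)/\varepsilon$. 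This yields the upper bound.

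\textbf{Lower bound.} Theorem~\ref{thm:G-H} with $C=1$, $c=\mu=1$ gives $d_\varepsilon(x,y)\ge K^{-1}\ell_\varepsilon(\gamma)$, so it suffices to bound $\ell_\varepsilon(\gamma)$ from below.

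If $\varepsilon|x-y|\le1$, use $|p-u|\le|p-x|+2|x-y|$ and $|p-x|\le (x|y)_p+|x-y|$ to get $\rho_\varepsilon\ge e^{-3}e^{-\varepsilon(x|y)_p}$ on $\gamma$. Since $\ell_d(\gamma)\ge|x-y|$, this gives the bound.

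If $\varepsilon|x-y|>1$, one of $|x-z_0|$, $|y-z_0|$ is at least $|x-y|/2$. On the corresponding half, $|p-u|\le|p-z_0|+\ell_d(\gamma[z_0,u])$, so $\ell_\varepsilon\ge \rho_\varepsilon(z_0)\int_0^{|x-y|/2}e^{-\varepsilon t}\,dt\ge \rho_\varepsilon(z_0)(1-e^{-1/2})/\varepsilon$. This is exactly the computation in Case~2 of Proposition~\ref{prop:quasiisometry}.

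\textbf{Main obstacle.} The delicate step is the outward-growth estimate from the "center" $z_0$ in both directions in the upper bound. Lemma~\ref{quasigeodesic ieq} as stated only guarantees growth toward one endpoint. I would redo its proof using the subdivision of a $2$-short arc $\alpha:x\curvearrowright y$ induced by $p$. The claim to establish is that the projection $z_0'\in\alpha$ of $z_0$ lies within bounded distance of $\alpha^*$, since $|\alpha^*|\lesssim\delta$ and $|p-x_\alpha|\approx(x|y)_p$. Granting that, \cite[Lemma 4.1]{Allu} applies on both $\alpha[x,z_0']$ and $\alpha[z_0',y]$, and the estimate transfers to $\gamma$ through Lemma~\ref{lem:proj1}.
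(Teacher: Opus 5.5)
Your proposal is correct and follows essentially the paper's route: a $(1,1)$-quasigeodesic test curve, a center point on $\gamma$ at distance $(x|y)_p+O(1)$ from $p$, the split at $\varepsilon|x-y|=1$, two-sided outward growth from the center via \cite[Lemma 4.1]{Allu} on a $2$-short arc transferred to $\gamma$ by stability, and Theorem~\ref{thm:G-H} plus integration over the longer half for the lower bound. The paper avoids your ``main obstacle'' by taking the center $x_0\in\gamma$ from the outset within $M_\delta$ of the tripod point $x_\alpha$ (the tripod lemma then gives $\bigl||p-x_0|-(x|y)_p\bigr|\le M_\delta+4\delta+8$), so \cite[Lemma 4.1]{Allu} at $z=x_\alpha$ gives growth on both $\alpha[x,x_\alpha]$ and $\alpha[x_\alpha,y]$ directly; your detour through a near-closest point $z_0$ needs exactly this growth estimate at $x_\alpha$ to show $z_0'$ is near $\alpha^*$, since $\ell_d(\alpha^*)\le 2$ and $|p-x_\alpha|\approx(x|y)_p$ alone do not imply it.
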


\begin{proof}
	Let $\gamma \in \Lambda_{x,y}^{1,1}(X,d)$ and let $\alpha:x\curvearrowright y$, $\alpha_1:x\curvearrowright p$ be $2$-short arcs. Let $x_\alpha\in\alpha$ and $x_{\alpha_1},p_{\alpha_1}\in\alpha_1$ be the induced points, so that
	\[
	\ell_d(\alpha[x,x_\alpha])=(y|p)_x, \quad\ell_d(\alpha_1[x,x_{\alpha_1}])=(y|p)_x, \quad\ell_d(\alpha_1[p_{\alpha_1},p])=(x|y)_p.
	\]
	By \cite[Tripod Lemma 2.15]{Vaisala},
	\beq\label{5.0}
	|x_\alpha-x_{\alpha_1}|\leq 4\delta+4.
	\eeq
	From Lemma~\ref{thm:stability}, there exists $x_0\in\gamma$ such that
	\beq\label{5.1}
	|x_0-x_\alpha|\leq M_\delta=M(\delta,1,1,2).
	\eeq
	Since  $$|x_{\alpha_1}-p_{\alpha_1}|\leq \ell_d(\alpha_1[x_{\alpha_1},p_{\alpha_1}])=\ell_d(\alpha_1)-(y|p)_x-(x|y)_p=\ell_d(\alpha_1)-|x-p|\leq2,$$ together with \eqref{5.0} and \eqref{5.1}, it implies that
	\beq\label{5.2}
	\begin{aligned}
		|x_0-p|&\leq |x_\alpha-p|+|x_0-x_\alpha|\leq|x_{\alpha_1}-p|+|x_\alpha-x_{\alpha_1}|+M_\delta\\
		&\leq |x_{\alpha_1}-p_{\alpha_1}|+|p_{\alpha_1}-p|+ M_\delta+4\delta+4\\
		&\leq (x|y)_p+ M_\delta+4\delta+6.
	\end{aligned}
	\eeq
	On the other hand, 
	\beqq
	\begin{aligned}
		(x|y)_p
		&=\ell_d(\alpha_1[p_{\alpha_1},p])\leq |p_{\alpha_1}-p|+2\\
		&\leq |p-x_{\alpha}|+|x_{\alpha}-x_{\alpha_1}|+|x_{\alpha_1}-p_{\alpha_1}|+2\\
		&\leq |p-x_0|+M_\delta+4\delta+8.
	\end{aligned}
	\eeqq
	Thus
	\begin{equation*}
		\bigl||p-x_0|-(x|y)_p\bigr|\leq M_\delta+4\delta+8.
	\end{equation*}
	In particular,
	\begin{equation}\label{eq:density-center-qg}
		\rho_\varepsilon(x_0)\asymp_{\delta} e^{-\varepsilon(x|y)_p}.
	\end{equation}
	
	We now estimate $d_\varepsilon(x,y)$. First suppose that $\varepsilon|x-y|\leq 1$. By \eqref{eq:prop of curve}, for every $u\in\gamma$, 
	\[
	\varepsilon|x_0-u|\leq \varepsilon\ell_d(\gamma)\leq 2\varepsilon|x-y|\leq 2.
	\]
	Hence by \eqref{eq:inequality}, for $u\in\gamma$, we have
	\beq\label{5.4}
	e^{-2}\rho_\varepsilon(x_0)\leq \rho_\varepsilon(u)\leq e^{2}\rho_\varepsilon(x_0).
	\eeq
	Since $|x-y|\leq\ell_d(\gamma)\leq 2|x-y|$ by \eqref{eq:prop of curve}, and together with \eqref{5.4}, it follows that
	\[
	d_\varepsilon(x,y)\leq \int_{\gamma}\rho_\varepsilon(u)\,ds\leq e^{2}\rho_\varepsilon(x_0)\ell_d(\gamma)\leq 2e^{2}\rho_\varepsilon(x_0)|x-y|.
	\]
	Moreover, applying Theorem~\ref{thm:G-H} and \eqref{5.4},
	\[
	d_\varepsilon(x,y)\geq \frac{1}{K}\ell_\varepsilon(\gamma)=\frac{1}{K}\int_{\gamma}\rho_\varepsilon(u)\,ds
	\geq \frac{e^{-2}}{K}\rho_\varepsilon(x_0)|x-y|,
	\]
	where $K=K(\delta)$.
	
	Assume next that $\varepsilon|x-y|>1$. Using \cite[Lemma 4.1]{Allu} with $z=x_\alpha$ on $\alpha$, we obtain for any $u_0\in\alpha[x,x_\alpha]$, 
	\[
	|p-u_0|\geq |p-x_\alpha|+|u_0-x_\alpha|-8\delta-16,	
	\]
	By symmetry, for any $u_0\in\alpha[x_\alpha, y]$, 
	\[
	|p-u_0|\geq |p-x_\alpha|+|u_0-x_\alpha|-8\delta-16,	
	\]
	Therefore, for any $u_0\in\alpha$,
	\[
	|p-u_0|\geq |p-x_\alpha|+|u_0-x_\alpha|-8\delta-16,	
	\]
	Then for any $u\in\gamma$, there exists $u_0'\in\alpha$ such that $|u-u_0'|\leq M_\delta$ by Lemma~\ref{thm:stability} and using \eqref{5.1}, we obtain
	\beqq
	\begin{aligned}
		|p-u|&\geq|p-u_0'|-|u-u_0'|\geq|p-x_\alpha|+|u_0'-x_\alpha|-M_\delta-8\delta-16\\
		&\geq|p-x_0|-|x_0-x_\alpha|+|u-x_0|-|u_0'-u|-|x_0-x_\alpha|-M_\delta-8\delta-16\\
		&\geq|p-x_0|+|u-x_0|-\kappa_1,
	\end{aligned}
	\eeqq
	where $\kappa_1=4M_\delta+8\delta+16$.
	
	Parametrize the two components of $\gamma\setminus\{x_0\}$ by arclength from $x_0$. Since $\gamma \in \Lambda_{x,y}^{1,1}(X,d)$, a point at arclength distance $t$ from $x_0$ satisfies $|u-x_0|\geq t-1$. Then we have
	\beqq
	\begin{aligned}
		d_\varepsilon(x,y)
		&\leq \int_{\gamma}\rho_\varepsilon(u)\,ds
		\leq 2e^{\varepsilon \kappa_1}\rho_\varepsilon(x_0)\int_0^\infty e^{-\varepsilon (t-1)}\,dt
		\leq \frac{2e^{\varepsilon_0 (\kappa_1+1)}}{\varepsilon}\rho_\varepsilon(x_0).
	\end{aligned}
	\eeqq

	For the reverse inequality, one of the two components of $\gamma\setminus\{x_0\}$ has length at least $\frac12\ell_d(\gamma)\geq \frac12|x-y|>\frac1{2\varepsilon}.$
	Hence by Theorem~\ref{thm:G-H}, we obtain
	\beqq
	\begin{aligned}
		d_\varepsilon(x,y)
		\geq \frac{1}{K}\ell_\varepsilon(\gamma)=\frac{1}{K}\int_{\gamma}\rho_\varepsilon(u)\,ds
		\geq \frac{1}{K}\rho_\varepsilon(x_0)\int_0^{1/(2\varepsilon)}e^{-\varepsilon t}\,dt
		=\frac{1-e^{-1/2}}{K\varepsilon}\rho_\varepsilon(x_0),
	\end{aligned}
	\eeqq
where $K=K(\delta)$.
	
Combining the two cases with \eqref{eq:density-center-qg}, we obtain
	\[
	d_\varepsilon(x,y)\asymp_{\delta}
	\frac{e^{-\varepsilon(x|y)_p}}{\varepsilon}
	1\wedge\varepsilon|x-y|,
	\]
	which proves \eqref{eq:twopoint}.
\end{proof}
\begin{proof}[Proof of Proposition~\textup{\ref{prop:boundary-id}}]
That	$\Phi$ is well-defined, injective and quasiisometry follows from 
	\cite[Proposition~4.13]{BHK} or \cite[Theorem~1.3]{AlluJose}.
	It remains to prove that $\Phi$ is surjective.  
	
	Let $\xi\in\partial X_\varepsilon$. Then there exists a sequence $\{x_n\}\subset X$ 
	such that $x_n\stackrel{d_\varepsilon}{\longrightarrow}\xi$. Hence
	$d_\varepsilon(x_n,x_m)\to 0$ as $n,m\to\infty$, 
	and \eqref{eq:twopoint} implies
	$$\frac{e^{-\varepsilon(x_n|x_m)_p}}{\varepsilon} 1\wedge\varepsilon|x_n-x_m|\to 0.$$
	If $|x_n-x_m|\to 0$ as $n,m\to\infty$, then by completeness of $X$, there exists $x\in X$ with $x_n\stackrel{d}{\longrightarrow}x$.  
	By \eqref{eq:twopoint}, we would have $x_n\stackrel{d_\varepsilon}{\longrightarrow}x$,  which contradicts the fact that $\xi\in\partial X_\varepsilon$ whereas $x\in X$ is an interior point. Therefore
	$|x_n-x_m|\not\to 0$ as $n,m\to\infty$.
	Consequently,
	$(x_n|x_m)_p\xrightarrow{n,m\to\infty}\infty,$
	and the sequence $\{x_n\}$ defines a point of $\partial_G X$. The surjectivity of $\Phi$ follows.	
\end{proof}



\end{document}